\newcommand{\qed}{\hfill\rule{2mm}{3mm}\vspace{4mm}}
\numberwithin{equation}{section}
 \newtheorem{theorem}{Theorem}[section]
 \newtheorem{lemma}[theorem]{Lemma}
 \newtheorem{definition}[theorem]{Definition}
 \newtheorem{remark}{Remark}
 \newtheorem{assumption}[theorem]{Assumption}
 \newtheorem{corollary}[theorem]{Corollary}
 \newtheorem{example}[theorem]{Example}
 \def\<{\langle}\def\>{\rangle}
 \def\beqnn{\begin{eqnarray*}}\def\eeqnn{\end{eqnarray*}}
 \def\<{\langle}\def\>{\rangle}
\def\beqlb{\begin{eqnarray}}\def\eeqlb{\end{eqnarray}}
 \def\qed{\hfill$\Box$\medskip}
\begin{document}

\bigskip\bigskip
\noindent{\Large\bf Asymptotic behaviour of heavy-tailed branching processes  in random environments }\footnote{
\noindent  Supported by NSFC (NO.~11531001).
}

\noindent{
Wenming Hong\footnote{ School of Mathematical Sciences
\& Laboratory of Mathematics and Complex Systems, Beijing Normal
University, Beijing 100875, P.R. China. Email: wmhong@bnu.edu.cn} ~ Xiaoyue Zhang\footnote{ School of Mathematical Sciences
\& Laboratory of Mathematics and Complex Systems, Beijing Normal
University, Beijing 100875, P.R. China. Email: zhangxiaoyue@mail.bnu.edu.cn}

\noindent{
(Beijing Normal University)
}

}

\begin{center}
\begin{minipage}{12cm}
\begin{center}\textbf{Abstract}\end{center}
\footnotesize
Consider a heavy-tailed branching process (denoted by $Z_{n}$)  in random environments, under the condition which infers that $\mathbb{E}\log m(\xi_{0})=\infty$. We show that  (1) there exists no proper $c_{n}$ such that $\{Z_{n}/c_{n}\}$ has a proper, non-degenerate limit; (2) normalized by a sequence of functions, a proper limit can be obtained, i.e., $y_{n}\left(\bar{\xi},Z_{n}(\bar{\xi})\right)$ converges almost surely to a random variable $Y(\bar{\xi})$, where $Y\in(0,1)~\eta$-a.s.; (3) finally, we give a necessary and sufficient conditions for the almost sure convergence of $\left\{\frac{U(\bar{\xi},Z_{n}(\bar{\xi}))}{c_n(\bar{\xi})}\right\}$, where $U(\bar{\xi})$ is a slowly varying function that may depends on $\bar{\xi}$.
\bigskip

\mbox{}\textbf{Keywords:}\quad branching process, random environment, heavy-tailed,  regular,  irregular,  martingales. \\
\mbox{}\textbf{Mathematics Subject Classification}:  Primary 60J80;
secondary 60F10.

\end{minipage}
\end{center}

\section{ Introduction}

Let $\{Z_{n}\}$ be a Galton-Watson branching process with $Z_{0}=1$ and governed by the family size probability generating function $f(s)=\sum_{j=0}^{\infty}p_{j}s^{j}$, where $p_{1}\neq1$. Let $m=\sum_{j=0}^{\infty}jp_{j}=f^{'}(1-)$ denotes the mean of the offspring distribution.


Martingale convergence of branching processes have been investigated extensively. Kesten and Stigum ($\cite{KS66}$) showed that the limit of the martingale
$\left\{\frac{Z_{n}}{m^{n}}\right\}$ is proper if and only if $EZ_1\log Z_1 < \infty$.
 After that, if only the  condition $EZ_1<\infty$ is fulfilled,  Seneta ($\cite{SE69}$) showed that if we use $f_{n}(s)$ to denote the probability generating function of $Z_{n}$, $k_{n}(s)=-\log f_{n}(e^{-s})$, $h_{n}(s)$ is the inverse function of $k_{n}(s)$, then for every $s\in (0,-\log q)$, $Z_{n}h_{n}(s)$ converges in distribution to a proper, non-degenerate law. Heyde ($\cite{HC70}$) strengthened this result to almost sure convergence, using a martingale argument.

When $m=\infty$, the situation is more complicated. In this case, Seneta ($\cite{SE69}$) showed that it is never possible to find $\{c_{n}\}$ such that $\left\{\frac{Z_{n}}{c_{n}}\right\}$ converges in distribution to a proper, non-degenerate law. Darling ($\cite{DD70}$) and Seneta ($\cite{SE73}$) gave sufficient conditions for the existence of a sequence $\{c_{n}\}$ such that $\left\{\frac{\log (Z_{n}+1)}{c_{n}}\right\}$ converges in distribution to a non-degenerate law. Schuh and Barbour ($\cite{SB77}$) showed that branching process with infinite mean can be classified as regular or irregular according to the property that whether there exists a sequence of constants $\{c_{n}\}$ such that $P\left(0<\lim_{n\to\infty}\frac{Z_{n}}{c_{n}}<\infty\right)>0$. In that paper, they derived necessary and sufficient conditions for the almost sure convergence of $\frac{U(Z_{n})}{c_{n}}$, where $U$ is a slow varying function, moreover, the distribution function of the limit satisfies a $Poincar\acute{e}$ functional equation.

When this model is extended to a random environment, the corresponding martingale convergence results have been proved by Tanny($\cite{TD78}$, $\cite{TD88}$).  The Kesten-Stigum type theorem was proved in Tanny ($\cite{TD88}$),  $\lim_{n\to\infty}\frac{Z_{n}}{\pi_n}=W$ $w.p.1$, where $W$ is proper and non-degenerate if and only if $E(Z_1\log^{+} Z_1/m(\xi_{0})) < \infty$ when the environmental sequence $\bar{\xi}=(\xi_{0},\xi_{1},\cdots)$ is i.i.d. (Theorem 2,  $\cite{TD88}$), where $m(\xi_{i})$ is the expected number of offspring of particle conditioned on the environment $\xi_{i}$, and $\pi_n:=\Pi_{i=0}^{n-1}m(\xi_{i})$.
The Seneta-Heyde type theorem  was considered in Tanny ($\cite{TD78}$) if the environmental sequence $\bar{\xi}=(\xi_{0},\xi_{1},\cdots)$ is stationary and ergodic and satisfies $\mathbb{E}|\log m(\xi_{0})|<\infty$ , then there exists a sequence of random variables $c_{n}(\bar{\xi})$, depending only on the environment sequence $\bar{\xi}$ such that $\lim_{n\to\infty}\frac{Z_{n}}{c_n}=W$ $w.p.1$ and $W$ is proper and non-degenerate, i.e., $\mathbb{P}(0<W<\infty|\bar{\xi})=1-q(\bar{\xi})$, where $q(\bar{\xi})$ is the extinction probability conditioned on $\bar{\xi}$.

In the present paper, we are interested in
the case $\mathbb{E}|\log m(\xi_{0})|=\infty$.  We investigate the asymptotic behaviors of branching processes in random environments under the condition {\bf(A2)} (which  infers $\mathbb{E}|\log m(\xi_{0})|=\infty$). Part of the results in Schuh and Barbour ($\cite{SB77}$) will be extended to this random environment situation, in particular, (1) we show that  for $a.s.$ $\bar{\xi}$, there exists no $\{c_{n}(\bar{\xi})\}$ such that $\left\{\frac{Z_{n}(\bar{\xi})}{c_{n}(\bar{\xi})}\right\}$ converges to a proper random variable;
(2)If $Z_{n}$ can be normalized by a sequence of functions, i.e., let $y_{n}(\bar{\xi},x)=f_{\xi_{0}}\left(\cdots\left(f_{\xi_{n-1}}\left(e^{-\frac{1}{x}}\right)\right)\cdots\right)$, then $y_{n}(\bar{\xi},Z_{n}(\bar{\xi}))$ converges almost surely to a  proper and non-degenerate random variable $Y(\bar{\xi})$, where $Y\in(0,1)~\eta$-a.s.;
(3) we give the necessary and sufficient conditions for the almost sure convergence of $\left\{\frac{U(\bar{\xi},Z_{n}(\bar{\xi}))}{c_n(\bar{\xi})}\right\}$, where $U(\bar{\xi})$ is a slowly varying function that may depends on $\bar{\xi}$.

 \section{Description of the model and main results}

Let $\bar{\xi}=\{\xi_{n}:n\in \mathbb{Z}\}$ be a sequence of independent and identically distributed probability distributions on nonnegative integers, where
\beqnn
\xi_{n}=\left\{\xi_{n}^{(0)},\xi_{n}^{(1)},\cdots\right\},~~~~~~~~\xi_{n}^{(i)}\geqslant0,~~~~ ~~~~\sum_{i=0}^{\infty}\xi_{n}^{(i)}=1.
\eeqnn
The law of the environment $\bar{\xi}$ is given by $\eta$.

Let $Z_{0}=1$, $Z_{n}$ be the sum of $Z_{n-1}$ independent random variables, each of which has distribution $\xi_{n-1}$. Then the sequence of random variables $Z_{0},Z_{1},\cdots$ is called a branching process in the random environment $\bar{\xi}$. We use $P_{\bar{\xi}}$ to denote the probability when the environment $\bar{\xi}$ is fixed. As usual, $P_{\bar{\xi}}$ is called quenched law. The total probability $\mathbb{P}$, which is usually called annealed law, is given by
\beqnn
\mathbb{P}(\cdot):= \int P_{\bar{\xi}}(\cdot)\eta(d\bar{\xi}).
\eeqnn

\begin{assumption}\label{Ass}

{\bf(A1)} $\eta(\xi_{0}^{(0)}=0)=1.$

\end{assumption}

\begin{remark}
\noindent {\bf(A1)} ensures that each particle produces at least one particle, then this is an increasing branching process in the random environment, i.e., the extinction probability $q(\bar{\xi})=0$. We propose this assumption to simplify our statement, but in fact this assumption can be removed by using $Theorem~2$ in $\cite{TD78}$ to get the main result in our passage on the non-extinction event.
 \qed

\end{remark}

{\noindent \bf Some notations:}

$\bullet$   $m(\xi_{0})=E_{{\xi}_0}(Z_1):=\sum_{y=0}^{\infty}y\xi_{0}^{(y)}$;

$\bullet$ $k_{\xi_{i}}(s)=-\log f_{\xi_{i}}(e^{-s})$; ~~  $h_{\xi_{i}}(s)=-\log f_{\xi_{i}}^{(-1)}(e^{-s})$,  $0<s<\infty$;

$\bullet$ $k_{n}(\bar{\xi},s):=k_{\xi_{0}}(k_{\xi_{1}}(\cdots(k_{\xi_{n-1}}(s))\cdots)=-\log f_{\xi_{0}}
\left(f_{\xi_{1}}\left(\cdots\left(f_{\xi_{n-1}}(e^{-s})\right)\cdots\right)\right)$,

$\bullet$ $h_{n}(\bar{\xi},s)=h_{\xi_{n-1}}(\cdots(h_{\xi_{0}}(s))\cdots)=-\log f_{\xi_{n-1}}^{(-1)}\left(\cdots\left(f_{\xi_{0}}^{(-1)}\left(e^{-s}\right)\right)\cdots\right)$;

$\bullet$ $\theta$ is the shift operator, for any $\bar{\xi}=\{\xi_{0},\xi_{1},\cdots\}$,  $\theta\bar{\xi}:=\{\xi_{1},\xi_{2},\cdots\}$;

$\bullet$ $d\left(\bar{\xi},s\right):=\lim_{n\to\infty}\frac{h_{n+1}\left(\bar{\xi},s\right)}{h_{n}\left(\theta\bar{\xi},s\right)}=
\lim_{n\to\infty}\frac{h_{\xi_{n}}\left(\cdots\left(h_{\xi_{0}}(s)\right)\cdots\right)}{h_{\xi_{n}}\left(\cdots(h_{\xi_{1}}(s))\cdots\right)}$.

\begin{assumption}\label{Ass}

{\bf (A2)} $\eta(D)=1$, where $D=\left\{\bar{\xi}:\text{for any } s\in(0,\infty),~d(\bar{\xi},s)=0\right\}$.
\end{assumption}

\begin{remark}

\noindent  Tanny ($Lemma~2.4$, \cite{TD78})  proved  that if $\mathbb{E}|\log m(\xi_{0})|<\infty$, then $0<d(\bar{\xi},s)\leqslant1~~w.p.1$.  Thus under the assumption {\bf(A2)} we know $\mathbb{E}|\log m(\xi_{0})|=\infty$. Actually, we conjecture {\bf(A2)} is equivalence with $\mathbb{E}|\log m(\xi_{0})|=\infty$, but unfortunately we have not proved it yet.

An example is given in Example \ref{exm}, where the Assumption {\bf(A1)} and {\bf(A2)} are fulfilled. \qed

\end{remark}

\begin{remark}

\noindent The heavy-tailed behavior have been considered in branching random walk, see for example (\cite{A13}) and (\cite{V87}), etc.. \qed
\end{remark}

{\bf 1 { No proper limit exists} }

If $\mathbb{E}|\log m(\xi_{0})|<\infty$, Tanny ($\cite{TD78}$) proved that there exists a sequence of random variables $c_{n}(\bar{\xi})$, depending only on the environment sequence $\bar{\xi}$ such that $\lim_{n\to\infty}\frac{Z_{n}}{c_n}=W$ $w.p.1$ and $W$ is proper and non-degenerate, i.e., $\mathbb{P}\left(0<W<\infty|\bar{\xi}\right)=1-q(\bar{\xi})$, where $q(\bar{\xi})$ is the extinction probability conditioned on $\bar{\xi}$. The key step of the proof is in Tanny ($Lemma~2.4$, \cite{TD78}), where showed  that if $\mathbb{E}|\log m(\xi_{0})|<\infty$, then $0<d(\bar{\xi},s)\leqslant1~~w.p.1$.

We are interested in the other situation when $d(\bar{\xi},s)=0 ~~w.p.1$, $0<s<\infty$, i.e.,  Assumption \ref{Ass} (where (A2) infer that $\mathbb{E}|\log m(\xi_{0})|=\infty$), we will see that for a.e. $\bar{\xi}$, no $c_{n}(\bar{\xi})$ exists such that $Z_{n}(\bar{\xi})/c_{n}(\bar{\xi})$ has a proper and non-degenerate limit. At first, we show that for any $s\in(0,\infty)$, $ h_{n}(\bar{\xi},s)$ is not the suitable norming for $Z_{n}(\bar{\xi})$ as the following,

\begin{theorem}\label{t1}  For any $s\in(0,\infty)$, $Z_{n}(\bar{\xi})h_{n}(\bar{\xi},s)$ converges to $W(\bar{\xi},s)~w.p.1$. If $\eta(D)>0$ then $P_{\bar{\xi}}\left(W
(\bar{\xi},s)=\infty\right)>0,~P_{\bar{\xi}}\left(W(\bar{\xi},s)=0\right)>0,~\eta\text{-a.e..}$
\end{theorem}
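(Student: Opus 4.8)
The plan is to treat the two assertions separately.

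\smallskip
\noindent\textbf{Convergence.} The right object is the exponential. Under the quenched law $P_{\bar\xi}$, with $(\mathcal F_n)$ the filtration generated by $(Z_k)$, I would check that $M_n:=\exp\{-h_n(\bar\xi,s)Z_n\}$ is a martingale: by the branching property $E_{\bar\xi}[e^{-uZ_{n+1}}\mid\mathcal F_n]=f_{\xi_n}(e^{-u})^{Z_n}=\exp\{-k_{\xi_n}(u)Z_n\}$, so taking $u=h_{n+1}(\bar\xi,s)=h_{\xi_n}(h_n(\bar\xi,s))$ and using $k_{\xi_n}\circ h_{\xi_n}=\mathrm{id}$ gives $E_{\bar\xi}[M_{n+1}\mid\mathcal F_n]=\exp\{-h_n(\bar\xi,s)Z_n\}=M_n$. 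Since $0<M_n\le1$, $M_n$ converges $P_{\bar\xi}$-a.s.\ and in $L^1$ to a limit in $[0,1]$; as $x\mapsto e^{-x}$ is a homeomorphism of $[0,\infty]$ onto $[0,1]$, equivalently $Z_n h_n(\bar\xi,s)\to W(\bar\xi,s)\in[0,\infty]$ $P_{\bar\xi}$-a.s., for \emph{every} $\bar\xi$, and $L^1$-convergence yields $E_{\bar\xi}[e^{-W(\bar\xi,s)}]=\lim_n\exp\{-k_n(\bar\xi,h_n(\bar\xi,s))\}=e^{-s}$.

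\smallskip
\noindent\textbf{The $\{0,\infty\}$ behaviour.} The crucial upgrade is $E_{\bar\xi}[e^{-tW(\bar\xi,s)}]=e^{-s}$ for \emph{all} $t>0$. By dominated convergence $E_{\bar\xi}[e^{-tW(\bar\xi,s)}]=\lim_n E_{\bar\xi}[e^{-th_n(\bar\xi,s)Z_n}]=\lim_n\exp\{-k_n(\bar\xi,th_n(\bar\xi,s))\}$, so it suffices to show $k_n(\bar\xi,th_n(\bar\xi,s))\to s$. I would get this by a squeeze: for any $s'<s<s''$ the stretching property below forces $h_n(\bar\xi,s')/h_n(\bar\xi,s)\to0$ and $h_n(\bar\xi,s)/h_n(\bar\xi,s'')\to0$, so eventually $h_n(\bar\xi,s')\le th_n(\bar\xi,s)\le h_n(\bar\xi,s'')$; applying $k_n(\bar\xi,\cdot)$ (the inverse of $h_n(\bar\xi,\cdot)$) gives $s'\le k_n(\bar\xi,th_n(\bar\xi,s))\le s''$, and $s'\uparrow s$, $s''\downarrow s$ finish it. Granting $E_{\bar\xi}[e^{-tW}]\equiv e^{-s}$, let $t\downarrow0$ and $t\uparrow\infty$: monotone convergence gives $E_{\bar\xi}[e^{-tW}]\uparrow P_{\bar\xi}(W<\infty)$ and $E_{\bar\xi}[e^{-tW}]\downarrow P_{\bar\xi}(W=0)$, hence $P_{\bar\xi}(W=0)=P_{\bar\xi}(W<\infty)=e^{-s}$. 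Thus in fact $P_{\bar\xi}(0<W<\infty)=0$, $P_{\bar\xi}(W=0)=e^{-s}\in(0,1)$ and $P_{\bar\xi}(W=\infty)=1-e^{-s}\in(0,1)$ — stronger than the stated conclusion, and exactly what later rules out a norming sequence.

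\smallskip
\noindent\textbf{The stretching property} (the technical heart): for $\eta$-a.e.\ $\bar\xi$, $h_n(\bar\xi,a)/h_n(\bar\xi,b)\to0$ for all $0<a<b$. Since $f_{\xi_i}(e^{-\cdot})$ is a Laplace transform, hence log-convex, $k_{\xi_i}$ is concave through the origin and $h_{\xi_i}$ convex through the origin with $h_{\xi_i}(s)\le s$; so each $h_n(\bar\xi,\cdot)$ is convex through $0$, $s\mapsto h_n(\bar\xi,s)/s$ is nondecreasing, and $h_n(\bar\xi,a)/h_n(\bar\xi,b)$ decreases in $n$. Hence $L(\bar\xi,a,b):=\lim_n h_n(\bar\xi,a)/h_n(\bar\xi,b)\in[0,a/b]$ exists, is multiplicative ($L(\bar\xi,a,c)=L(\bar\xi,a,b)L(\bar\xi,b,c)$), and satisfies the cocycle relation $L(\bar\xi,a,b)=L(\theta\bar\xi,h_{\xi_0}(a),h_{\xi_0}(b))$ (from $h_{n+1}(\bar\xi,\cdot)=h_n(\theta\bar\xi,h_{\xi_0}(\cdot))$). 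Since $d(\bar\xi,s)=L(\theta\bar\xi,h_{\xi_0}(s),s)$, Assumption (A2) says precisely $L(\theta\bar\xi,h_{\xi_0}(s),s)=0$ for all $s$, $\eta$-a.e.; by Fubini, for $\eta$-a.e.\ $\bar\zeta$, $L(\bar\zeta,h_\xi(s),s)=0$ for all $s$ and all $\xi$ outside a null set. Multiplicativity spreads the zero: $L(\bar\zeta,a,b)=0$ whenever $a\le h_\xi(b)$ for one such $\xi$, hence for all $a$ below the essential supremum of $h_\xi(b)$ over $\xi$; iterating the cocycle relation replaces $(a,b)$ by $(h_k(\bar\xi,a),h_k(\bar\xi,b))$, which drives both coordinates to $0$ (using $h_n(\bar\xi,b)\to0$ $\eta$-a.e., from non-degeneracy of $\eta$) and decreases the ratio; and since the stretching property is stable under $\theta^{-1}$, the $0$–$1$ law for the ergodic i.i.d.\ shift reduces matters to showing it holds with positive $\eta$-probability.

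\smallskip
That last reduction is where I expect the real difficulty: one must show the zeros of $L$ furnished by (A2) for the special pairs $(h_\xi(s),s)$ propagate to \emph{all} pairs $(a,b)$, including those with $b/a$ close to $1$, which lie beyond the direct spreading argument when the environment is a.s.\ bounded away from the trivial one. I would handle this by quantifying, along the composition and as the arguments go to $0$, the defect $h_{\xi_k}(u)/h_{\xi_k}(v)<u/v$ caused by strict convexity of $h_{\xi_k}$ (which on $\{m(\xi_k)=\infty\}$ — of positive probability under (A2), since $\mathbb E|\log m(\xi_0)|=\infty$ — even gives $h_{\xi_k}(v)=o(v)$ as $v\to0$), and showing, via Borel--Cantelli on the i.i.d.\ sequence $(\xi_k)$ together with the telescoping identity $\prod_{k<n}\bigl(h_{k+1}(\bar\xi,b)/h_k(\bar\xi,b)\bigr)=h_n(\bar\xi,b)/b\to0$, that the accumulated contraction is infinite $\eta$-a.s., i.e.\ $L(\bar\xi,a,b)=0$. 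The hypothesis $\eta(D)>0$ (rather than $=1$) is then absorbed by the same $0$–$1$ dichotomy, giving the conclusion for $\eta$-a.e.\ $\bar\xi$.
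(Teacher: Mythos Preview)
Your convergence argument is fine and matches the paper's: $e^{-h_n(\bar\xi,s)Z_n}$ is a bounded martingale, so $W(\bar\xi,s)$ exists and $E_{\bar\xi}[e^{-W(\bar\xi,s)}]=e^{-s}$.

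The second half, however, aims at the wrong target. Your ``stretching property'' $h_n(\bar\xi,a)/h_n(\bar\xi,b)\to 0$ for every $a<b$ is \emph{exactly} the statement that every $s\in(0,\infty)$ is $\bar\xi$-regular (this is the paper's Theorem~\ref{re1}); the conclusion you draw from it, $P_{\bar\xi}(0<W(\bar\xi,s)<\infty)=0$ with $P_{\bar\xi}(W=0)=e^{-s}$, is precisely the definition of $s$ being a regular point. But Theorem~\ref{t1} only asserts $P_{\bar\xi}(W=0)>0$ and $P_{\bar\xi}(W=\infty)>0$, leaving room for $P_{\bar\xi}(0<W<\infty)>0$ --- i.e., irregular points. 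The paper then \emph{separately} studies the regular/irregular dichotomy (Definitions~\ref{rir}--\ref{rp}, Theorem~\ref{tprir}), and irregular processes genuinely occur under (A2): take $\eta$ to be a point mass at any single infinite-mean offspring law that is irregular in the sense of Schuh--Barbour~\cite{SB77}; then $d(\bar\xi,s)=\lim_{x\to 0}h(x)/x=0$ so (A2) holds, yet by construction some $s$ has $L(\bar\xi,a,s)>0$ for $a$ close to $s$. So the stretching property is false in general, and the difficulty you flag (``propagating the zeros of $L$ to all pairs $(a,b)$ with $b/a$ close to $1$'') is not a difficulty to be overcome but an outright obstruction.

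The paper's route is different and does not need every point to be regular. It derives the functional equation $\chi(u;\bar\xi,s)=f_{\xi_0}\bigl(\chi(u;\theta\bar\xi,h_{\xi_0}(s))\bigr)$ (your martingale identity iterated once), observes that the sets $A=\{\bar\xi:\exists s,\ P_{\bar\xi}(W(\bar\xi,s)<\infty)=1\}$ and $B=\{\bar\xi:\exists s,\ P_{\bar\xi}(W(\bar\xi,s)=0)=0\}$ are shift-invariant (hence have $\eta$-measure $0$ or $1$), and then uses the relation $\chi(u;\bar\xi,s)=f_{\xi_0}\bigl(\chi(u\,d(\bar\xi,s);\theta\bar\xi,s)\bigr)$ on $D$ to show $\eta(A)<1$ and $\eta(B)<1$. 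The conclusion $P_{\bar\xi}(W=\infty)>0$ and $P_{\bar\xi}(W=0)>0$ then follows for $\eta$-a.e.\ $\bar\xi$ from $\eta(A)=\eta(B)=0$. No control of $h_n(\bar\xi,a)/h_n(\bar\xi,b)$ for general $a<b$ is needed.
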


\begin{remark}
Note that the condition in Theorem \ref{t1} is  weaker than condition (A2). We conjecture that $\eta(D)=0 ~\mbox{or} \  1$. \qed
\end{remark}

Based on this facts, it is necessary to classify $s\in(0,\infty)$ as  two different  types of points from the following definition.
\begin{definition}
A point $s\in(0,\infty)$ is called $\bar{\xi}$-regular if $P_{\bar{\xi}}\left(W(\bar{\xi},s)\in\{0,\infty\}\right)=1$,
 and $\bar{\xi}$-irregular otherwise.
\end{definition}

\begin{definition}\label{prir}
The branching process $\left\{Z_{n}(\bar{\xi})\right\}$ is called $\bar{\xi}$-regular if all $0<s<\infty$ are $\bar{\xi}$-regular and $\bar{\xi}$-irregular otherwise.
\end{definition}

We have the following 0-1 law.
\begin{theorem}\label{tprir}
Let $A=\left\{\bar{\xi}:Z_{n}(\bar{\xi})~\text{is}~\bar{\xi}\text{-regular}\right\}$, then $\eta(A)=0~\text{or}~1$.
\end{theorem}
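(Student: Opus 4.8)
The plan is to exhibit $A$ as a shift-invariant event with respect to the environment sequence $\bar\xi$ and then invoke the ergodicity of $\{\xi_n\}$ (which is i.i.d., hence the shift $\theta$ is ergodic under $\eta$) to conclude $\eta(A)\in\{0,1\}$. Concretely, I would first reformulate $\bar\xi$-regularity in a form that makes the dependence on finitely many coordinates transparent. By Theorem \ref{t1}, for each $s$ the product $Z_n(\bar\xi)h_n(\bar\xi,s)$ converges a.s. to $W(\bar\xi,s)$, and $s$ is $\bar\xi$-regular iff $P_{\bar\xi}(W(\bar\xi,s)\in\{0,\infty\})=1$. The first key step is the relation between the tail of the process at $\bar\xi$ and at $\theta\bar\xi$: since $Z_n(\bar\xi)$ is built from $Z_1$ offspring (distributed as $\xi_0$) each launching an independent copy of the process in environment $\theta\bar\xi$, one has the composition identity $h_n(\bar\xi,s)=h_{n-1}(\theta\bar\xi,h_{\xi_0}(s))$ in the notation of the excerpt, and $Z_n(\bar\xi)$ conditioned on $Z_1=k$ is a sum of $k$ i.i.d. copies of $Z_{n-1}(\theta\bar\xi)$.

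The second step is to show that $\bar\xi$-regularity is invariant under the shift, i.e. $\bar\xi\in A \iff \theta\bar\xi\in A$. For the implication that matters, suppose $\theta\bar\xi$ is $\theta\bar\xi$-regular, so for every $t\in(0,\infty)$, $Z_{n-1}(\theta\bar\xi)h_{n-1}(\theta\bar\xi,t)\to W(\theta\bar\xi,t)\in\{0,\infty\}$ $P_{\theta\bar\xi}$-a.s. Fix $s\in(0,\infty)$ and set $t=h_{\xi_0}(s)\in(0,\infty)$ (here (A1) guarantees $f_{\xi_0}$ is a genuine p.g.f. with $f_{\xi_0}(0)=0$, so $h_{\xi_0}$ maps $(0,\infty)$ into $(0,\infty)$). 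Then $Z_n(\bar\xi)h_n(\bar\xi,s)$ is, conditionally on $Z_1=k$, a sum of $k$ independent copies of $Z_{n-1}(\theta\bar\xi)h_{n-1}(\theta\bar\xi,t)$; each summand tends to a $\{0,\infty\}$-valued limit, hence so does the sum (a finite sum of terms each going to $0$ or $\infty$, with at least one going to $\infty$ on a positive-probability set, lands in $\{0,\infty\}$; the delicate point, that the summands cannot ``cancel,'' is immediate since all quantities are nonnegative). Integrating over $k$ shows $W(\bar\xi,s)\in\{0,\infty\}$ $P_{\bar\xi}$-a.s., so $s$ is $\bar\xi$-regular; as $s$ was arbitrary, $\bar\xi\in A$. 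The reverse implication is handled symmetrically, writing the process at $\theta\bar\xi$ in terms of the process at $\bar\xi$ (a single ancestor in environment $\xi_0$ feeding into $\theta\bar\xi$), or more simply by noting that $W(\bar\xi,s)$ and the collection $\{W(\theta\bar\xi,t)\}_{t}$ are linked by the same composition identity run the other way. Hence $A = \theta^{-1}A$ up to an $\eta$-null set.

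The final step is a one-line appeal to ergodicity: since $\bar\xi$ is an i.i.d. sequence, $\theta$ is measure-preserving and ergodic for $\eta$, so any shift-invariant event has $\eta$-measure $0$ or $1$; applying this to $A$ gives the claim. The main obstacle I anticipate is the rigorous verification that $\bar\xi$-regularity of $s$ transfers across the shift for \emph{all} $s$ simultaneously — one must be careful that the a.s. statements (which hold for each fixed $s$) are upgraded appropriately, and that the change of variable $s\mapsto h_{\xi_0}(s)$ is a bijection of $(0,\infty)$ so that ranging over $t$ is the same as ranging over $s$. One should also confirm measurability of $A$, which follows since $W(\bar\xi,s)$ is an a.s. limit of measurable functions of $(\bar\xi,\omega)$ and the regularity condition can be phrased via a countable dense set of $s$ once monotonicity/continuity in $s$ is established (this continuity in $s$ is presumably available from the structure of $h_n(\bar\xi,\cdot)$ and should be noted explicitly).
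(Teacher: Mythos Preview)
Your overall strategy---show that $A$ is shift-invariant and invoke ergodicity of the i.i.d.\ shift---is exactly what the paper does. The mechanism for establishing shift-invariance, however, is different. The paper does not use the branching decomposition $W(\bar\xi,s)=\sum_{j=1}^{Z_1}W^{(j)}(\theta\bar\xi,h_{\xi_0}(s))$ at all; instead it appeals to the purely analytic characterization from Theorem~\ref{re1} (a point $s$ is $\bar\xi$-regular iff $\lim_n h_n(\bar\xi,t)/h_n(\bar\xi,s)=0$ for all $t<s$) together with the composition identity $h_n(\theta\bar\xi,t)=h_{n+1}(\bar\xi,k_{\xi_0}(t))$. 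These two ingredients immediately give that $s$ is $\theta\bar\xi$-regular iff $k_{\xi_0}(s)$ is $\bar\xi$-regular, and since $k_{\xi_0}$ is a bijection of $(0,\infty)$, both inclusions $A\subseteq\theta^{-1}A$ and $\theta^{-1}A\subseteq A$ follow by the same one-line computation, with no probabilistic content.

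Your probabilistic route is correct for the implication $\theta\bar\xi\in A\Rightarrow\bar\xi\in A$: a finite sum of nonnegative terms each lying in $\{0,\infty\}$ lies in $\{0,\infty\}$. But the reverse implication is \emph{not} symmetric, contrary to what you write. There is no way to ``write the process at $\theta\bar\xi$ in terms of the process at $\bar\xi$''---the branching decomposition only runs one way. What you actually need for $\bar\xi\in A\Rightarrow\theta\bar\xi\in A$ is the separate (easy) observation that if $\sum_{j=1}^{Z_1}W^{(j)}\in\{0,\infty\}$ $P_{\bar\xi}$-a.s., with $Z_1$ a.s.\ finite and the $W^{(j)}$ i.i.d.\ nonnegative and independent of $Z_1$, then each $W^{(j)}\in\{0,\infty\}$ a.s.; otherwise, on the positive-probability event $\{Z_1=k\}\cap\{W^{(1)},\dots,W^{(k)}\in(0,\infty)\}$ the sum would lie in $(0,\infty)$. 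So your argument can be completed, but you should replace ``symmetrically'' by this explicit step. The paper's approach via Theorem~\ref{re1} is cleaner here precisely because that deterministic criterion is genuinely symmetric under the bijection $s\leftrightarrow h_{\xi_0}(s)$, and it also sidesteps the ``for all $s$ simultaneously'' and measurability worries you raise, since regularity becomes a statement about limits of deterministic functions of $\bar\xi$ alone.
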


In section \ref{srir},  we will discuss the limit behavior of $Z_{n}(\bar{\xi})h_{n}(\bar{\xi},s)$ in details, and finally conclude that no $c_{n}(\bar{\xi})$ exists such that $Z_{n}(\bar{\xi})/c_{n}(\bar{\xi})$ has a proper and non-degenerate limit.

\

{\bf 2 {Normalized by a sequence of functions} }

Since for $\eta$-a.e. $\bar{\xi}$, $\lim_{n}Z_{n}(\bar{\xi})/c_{n}(\bar{\xi})$ is never a proper, non-degenerate random variable, we now consider other possibilities for normalizing $Z_{n}(\bar{\xi})$.

For $0\leqslant x<\infty$, let
\beqnn
y_{n}\left(\bar{\xi},x\right)=f_{\xi_{0}}\left(\cdots\left(f_{\xi_{n-1}}\left(e^{-\frac{1}{x}}\right)\right)\cdots\right), ~\left(y_{n}(\bar{\xi},0)=f_{n}(\bar{\xi},0)\right).
\eeqnn

\begin{theorem}\label{yu}
 $y_{n}\left(\bar{\xi},Z_{n}(\bar{\xi})\right)$ converges almost surely to a random variable $Y(\bar{\xi})$, where $Y\in(0,1)~\eta$-a.s..

If $s_{r}$ is a $\bar{\xi}$-regular point and $x_{r}=e^{-s_{r}}$, then $P_{\bar{\xi}}\left(Y(\bar{\xi})\leqslant x_{r}\right)=x_{r}$ and $P_{\bar{\xi}}\left(Y(\bar{\xi})= x_{r}\right)=0$. In particular, if $\{Z_{n}\}$ is a regular branching process, then $Y$ is uniformly distributed on $(0,1)$.

\end{theorem}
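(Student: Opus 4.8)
The plan is to argue entirely under the quenched law $P_{\bar\xi}$ and to exploit the martingale behind Theorem \ref{t1}. For fixed $s>0$ set $M_n(s):=e^{-Z_n h_n(\bar\xi,s)}$; since $f_{\xi_n}\big(e^{-h_{n+1}(\bar\xi,s)}\big)=e^{-h_n(\bar\xi,s)}$, the process $(M_n(s))_n$ is a $[0,1]$-valued $P_{\bar\xi}$-martingale (this is precisely the martingale used to prove Theorem \ref{t1}), hence $M_n(s)\to e^{-W(\bar\xi,s)}$ a.s.\ and in $L^1$, and in particular
\beqnn
E_{\bar\xi}\big[e^{-W(\bar\xi,s)}\big]=M_0(s)=e^{-s}\qquad (s>0).
\eeqnn
From this identity I would extract two structural facts. (i) $s\mapsto W(\bar\xi,s)$ is $P_{\bar\xi}$-a.s.\ nondecreasing (each $h_n(\bar\xi,\cdot)$ being strictly increasing), and monotone convergence in the identity above gives $\lim_{s\downarrow0}W(\bar\xi,s)=0$ and $\lim_{s\to\infty}W(\bar\xi,s)=\infty$ $P_{\bar\xi}$-a.s.; in particular $h_n(\bar\xi,s)\to0$ a.s. (ii) For every fixed $s_0>0$, $W(\bar\xi,\cdot)$ is $P_{\bar\xi}$-a.s.\ continuous at $s_0$: monotone convergence yields $E_{\bar\xi}[e^{-W(\bar\xi,s_0\pm)}]=e^{-s_0}=E_{\bar\xi}[e^{-W(\bar\xi,s_0)}]$, and since $e^{-W(\bar\xi,s_0+)}\le e^{-W(\bar\xi,s_0)}\le e^{-W(\bar\xi,s_0-)}$, equality of these expectations forces $W(\bar\xi,s_0-)=W(\bar\xi,s_0)=W(\bar\xi,s_0+)$ a.s. Finally, $k_n(\bar\xi,\cdot)$ and $h_n(\bar\xi,\cdot)$ are continuous, strictly increasing, mutually inverse self-maps of $(0,\infty)$, so $y_n(\bar\xi,x)=e^{-k_n(\bar\xi,1/x)}$; thus $y_n(\bar\xi,Z_n)=e^{-s_n}$ where $s_n:=k_n(\bar\xi,1/Z_n)$ is the unique root of $Z_nh_n(\bar\xi,s_n)=1$, and $y_n(\bar\xi,Z_n)\le e^{-s}$ iff $Z_nh_n(\bar\xi,s)\le1$.

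For the almost sure convergence, I would work on the $P_{\bar\xi}$-full event on which $Z_nh_n(\bar\xi,r)\to W(\bar\xi,r)$ for all $r\in\mathbb{Q}_{>0}$ simultaneously and $W(\bar\xi,\cdot)$ is nondecreasing from $0$ to $\infty$. If $r\in\mathbb{Q}_{>0}$ satisfies $W(\bar\xi,r)<1$ then eventually $Z_nh_n(\bar\xi,r)<1=Z_nh_n(\bar\xi,s_n)$, so $r<s_n$ for all large $n$; symmetrically $r>s_n$ eventually whenever $W(\bar\xi,r)>1$. Hence, with $a(\bar\xi):=\sup\{r\in\mathbb{Q}_{>0}:W(\bar\xi,r)<1\}$ and $b(\bar\xi):=\inf\{r\in\mathbb{Q}_{>0}:W(\bar\xi,r)>1\}$, we get $a(\bar\xi)\le\liminf_n s_n\le\limsup_n s_n\le b(\bar\xi)$, with $0<a(\bar\xi)\le b(\bar\xi)<\infty$ by (i). Since $\{a(\bar\xi)<b(\bar\xi)\}\subseteq\bigcup_{r\in\mathbb{Q}_{>0}}\{W(\bar\xi,r)=1\}$, it then suffices to show $P_{\bar\xi}(W(\bar\xi,r)=1)=0$ for every fixed $r>0$; granting this, $s_n\to a(\bar\xi)=b(\bar\xi)\in(0,\infty)$ a.s., hence $y_n(\bar\xi,Z_n)\to Y(\bar\xi):=e^{-a(\bar\xi)}\in(0,1)$ a.s., which is the first assertion.

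The hard part is this no-atom estimate --- more precisely, the statement that the law of $W(\bar\xi,s)$ is $\eta$-a.s.\ non-atomic on $(0,\infty)$. On the set $A$ of $\bar\xi$-regular environments it is immediate, since there $W(\bar\xi,s)\in\{0,\infty\}$ a.s., so by Theorem \ref{tprir} the difficulty is confined to the irregular case. The approach I would take: decomposing the first generation and using $h_n(\bar\xi,s)=h_{n-1}(\theta\bar\xi,h_{\xi_0}(s))$, then letting $n\to\infty$ in Theorem \ref{t1}, one obtains the distributional fixed-point relation, under $P_{\bar\xi}$,
\beqnn
W(\bar\xi,s)\ \overset{d}{=}\ \sum_{i=1}^{Z_1}W_i,
\eeqnn
with $Z_1\sim\xi_0$ (so $Z_1\ge1$ by \textbf{(A1)}) and the $W_i$ i.i.d.\ copies of $W(\theta\bar\xi,h_{\xi_0}(s))$ independent of $Z_1$; iterating, $W(\bar\xi,s)\overset{d}{=}\sum_{v}W^{(v)}$ over the $Z_n$ vertices of generation $n$, with i.i.d.\ summands distributed as $W(\theta^n\bar\xi,h_n(\bar\xi,s))$, and by (i) these summands concentrate at $0$ while $Z_n\to\infty$. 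Assumption \textbf{(A2)} excludes the fully deterministic environment (for which $d(\bar\xi,s)=1/m(\xi_0)>0$), so by the i.i.d.\ structure $\eta$-a.s.\ infinitely many $\xi_i$ are nondegenerate and the summand law is nondegenerate; a concentration-function bound for convolutions (the maximal atom of an $m$-fold convolution of a fixed nondegenerate law tends to $0$ as $m\to\infty$), combined with the i.i.d.\ triangular-array structure just obtained, then forces the maximal atom of $W(\bar\xi,s)$ in $(0,\infty)$ to vanish. This is where Assumption \textbf{(A2)} enters in an essential way.

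For the remaining assertions, fix a $\bar\xi$-regular point $s_r$ and put $x_r=e^{-s_r}$. Since $W(\bar\xi,s_r)\in\{0,\infty\}$ $P_{\bar\xi}$-a.s., the identity $E_{\bar\xi}[e^{-W(\bar\xi,s_r)}]=e^{-s_r}$ gives $P_{\bar\xi}(W(\bar\xi,s_r)=0)=x_r$. By (ii), $W(\bar\xi,\cdot)$ is $P_{\bar\xi}$-a.s.\ continuous at $s_r$, so on $\{W(\bar\xi,s_r)=0\}$ there is a rational $r'>s_r$ with $W(\bar\xi,r')<1$, whence $a(\bar\xi)\ge r'>s_r$ and $Y(\bar\xi)=e^{-a(\bar\xi)}<x_r$; likewise on $\{W(\bar\xi,s_r)=\infty\}$ there is a rational $r''<s_r$ with $W(\bar\xi,r'')>1$, whence $b(\bar\xi)\le r''<s_r$ and, using $a(\bar\xi)=b(\bar\xi)$ a.s., $Y(\bar\xi)=e^{-a(\bar\xi)}>x_r$. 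As $\{W(\bar\xi,s_r)=0\}$ and $\{W(\bar\xi,s_r)=\infty\}$ partition the space up to a null set, this shows $\{Y(\bar\xi)<x_r\}=\{W(\bar\xi,s_r)=0\}$ and $\{Y(\bar\xi)>x_r\}=\{W(\bar\xi,s_r)=\infty\}$ up to null sets, so $P_{\bar\xi}(Y(\bar\xi)=x_r)=0$ and $P_{\bar\xi}(Y(\bar\xi)\le x_r)=P_{\bar\xi}(Y(\bar\xi)<x_r)=x_r$. Finally, if $\{Z_n\}$ is $\bar\xi$-regular then every $x\in(0,1)$ equals $e^{-s_r}$ for the $\bar\xi$-regular point $s_r=-\log x$, so the identity just proved reads $P_{\bar\xi}(Y(\bar\xi)\le x)=x$ for all $x\in(0,1)$; that is, $Y(\bar\xi)$ is uniform on $(0,1)$, and when $\eta(A)=1$ this holds for $\eta$-a.e.\ $\bar\xi$.
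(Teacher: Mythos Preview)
Your scaffolding is sound: the martingale identity $E_{\bar\xi}[e^{-W(\bar\xi,s)}]=e^{-s}$, the monotonicity and boundary behaviour of $s\mapsto W(\bar\xi,s)$, the reduction of a.s.\ convergence of $s_n$ to the equality $a(\bar\xi)=b(\bar\xi)$, and the treatment of regular points in the last paragraph are all correct. Your argument for $Y\in(0,1)$, via $W(\bar\xi,0+)=0$ and $W(\bar\xi,\infty-)=\infty$, is actually more direct than the paper's route through Lemma~\ref{er}.

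The genuine gap is the ``no atom at $1$'' step for irregular $r$. The branching decomposition you write down is correct, but it produces a \emph{triangular array}: conditionally on $Z_n=m$, the summands are i.i.d.\ with a law $\mu_n$ that depends on $n$, and under Assumption~{\bf(A2)} one has $\mu_n(\{0\})\to1$ while $m\to\infty$. A bound of the type ``the maximal atom of $\mu^{*m}$ tends to $0$ as $m\to\infty$'' applies to a \emph{fixed} nondegenerate $\mu$; here you would need uniform control of the L\'evy concentration function of $\mu_n$ restricted to $(0,\infty)$, and nothing you have written gives that (the conditional law $\mu_n|_{(0,\infty)}$ could itself have an atom of mass $1-\varepsilon_n$ with $\varepsilon_n\to0$). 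As stated the argument does not close, and it is not clear that the conclusion $P_{\bar\xi}(W(\bar\xi,r)=1)=0$ is even true at every irregular $r$.

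The paper (following Schuh--Barbour) bypasses this entirely with a pathwise argument. From your own inclusion $\{a(\bar\xi)<b(\bar\xi)\}\subseteq\{W(\bar\xi,r)=1\ \text{for all rational }r\in(a,b)\}$ and monotonicity of $W(\bar\xi,\cdot)$, one gets that on $\{a<b\}$ the map $s\mapsto W(\bar\xi,s)$ is identically $1$ on the nonempty open interval $(a,b)$. But this is impossible: at a $\bar\xi$-regular $s$ one has $W(\bar\xi,s)\in\{0,\infty\}$, while at a $\bar\xi$-irregular $s_i$ Lemma~\ref{open} gives $W(\bar\xi,s)=l(\bar\xi,s)\,W(\bar\xi,s_i)/l(\bar\xi,s_i)$ on the irregular interval $I(\bar\xi,s_i)$, with $l(\bar\xi,\cdot)$ strictly increasing, so $W(\bar\xi,\cdot)$ cannot be constant on any subinterval of $I(\bar\xi,s_i)$ unless it vanishes. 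Hence $P_{\bar\xi}(a<b)=0$, and convergence follows without any atom estimate.
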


{\bf 3 {Normalized by an increasing slowly varying function} }

\begin{theorem}\label{t2} For $\eta$-a.e.$~\bar{\xi}$, let $U(\bar{\xi},x):[0,\infty)\to[0,\infty)$ be an increasing slowly varying function with $U(\bar{\xi},0)=0,~\lim_{x\to\infty}U(\bar{\xi},x)=\infty$, and $\{c_{n}(\bar{\xi})\}$ a sequence of positive constants. Then under Assumption \ref{Ass} we have:

\noindent $\left( 1 \right)$ For $\eta$-a.e.$~\bar{\xi}$, if
\begin{eqnarray}\label{exi}
H(\bar{\xi},s):=\displaystyle\lim_{n}\left(U(\bar{\xi},1/h_{n}(\bar{\xi},s))/c_{n}(\bar{\xi})\right)
\end{eqnarray}
exists for all but at most countably many $s\in(0,\infty)$, then for $\eta$-a.e.$~\bar{\xi}$, $U\left(\bar{\xi},Z_{n}(\bar{\xi})\right)/c_{n}(\bar{\xi})$ converges to $H\left(\bar{\xi},T(\bar{\xi})\right)$  $\left(T(\bar{\xi})=\sup\left\{s|0<s<\infty\text{ and } W(\bar{\xi},s)<1\right\}\right)$ almost surely.

\noindent $\left( 2 \right)$ On the other hand, if for $\eta$-a.e.$~\bar{\xi}$, $\left\{U(\bar{\xi},Z_{n}(\bar{\xi}))/c_{n}(\bar{\xi})\right\}$ converges in distribution to a distribution function $F_{\bar{\xi}}$, and define
\begin{eqnarray}
G\left(\bar{\xi},x\right)=\inf\left\{y~|~0\leqslant y<\infty~\text{and}~F_{\bar{\xi}}(y)\geqslant x\right\},~~~0\leqslant x<\infty.
\end{eqnarray}
Then for $\eta$-a.e.$~\bar{\xi}$,
\begin{eqnarray}\label{2.3}
\lim_{n}\left(U(\bar{\xi},1/h_{n}(\bar{\xi},s))/c_{n}(\bar{\xi})\right)=G(\bar{\xi},e^{-s})
\end{eqnarray}
exists for all the points $s\in (0,\infty)$ such that $G(\bar{\xi})$ is continues at $e^{-s}\big($Since $G(\bar{\xi})$ is an increasing function, these are all but at most countably many $s\in (0,\infty)\big)$.

\noindent $\left( 3 \right)$ Furthermore, under the condition of (2), if for $\eta$-a.e.$~\bar{\xi}$, $U(\bar{\xi},x)=U(\theta\bar{\xi},x)$ and for $\eta$-a.e.$~\bar{\xi}$, $F_{\bar{\xi}}$ satisfies for any $0<x<\infty$,
\begin{eqnarray}
0<F_{\bar{\xi}}(x)<1,&~~~\displaystyle\lim_{x\to 0}F_{\bar{\xi}}(x)=0,~~~\lim_{x\to \infty}F_{\bar{\xi}}(x)=1.
\end{eqnarray}
Then,
\begin{eqnarray}\label{di}
\lim_{n}\frac{c_{n-1}(\theta\bar{\xi})}{c_{n}(\bar{\xi})}=\alpha(\bar{\xi})>0
\end{eqnarray}
exists, and
\begin{eqnarray}\label{gth}
G\left(\bar{\xi},e^{-s}\right)/G\left(\theta\bar{\xi},e^{-h_{\xi_{0}}(s)}\right)=\alpha\left(\bar{\xi}\right)~~~~~\text{for } s\in(0,\infty).
\end{eqnarray}
What's more, for $\eta$-a.e.$~\bar{\xi}$, the distribution function $F_{\bar{\xi}}$ and $F_{\theta\bar{\xi}}$ satisfy the functional equation
\begin{eqnarray}\label{xidi}
F_{\bar{\xi}}\left(\alpha(\bar{\xi})u\right)=f_{\xi_{0}}\left(F_{\theta\bar{\xi}}(u)\right),~~0\leqslant u<\infty,
\end{eqnarray}
where $\alpha(\bar{\xi})$ is as in $(\ref{di})$.

\end{theorem}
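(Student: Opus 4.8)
The plan is to prove the three identities in the order (\ref{di}), (\ref{gth}), (\ref{xidi}), by exploiting the branching recursion that ties the environment $\bar\xi$ to the shifted environment $\theta\bar\xi$. Since $\eta$ is the law of an i.i.d.\ sequence, the shift $\theta$ preserves $\eta$, so Assumption~\ref{Ass}, the distributional convergence assumed in part~(2), the identity $U(\bar\xi,\cdot)=U(\theta\bar\xi,\cdot)$ and the non-degeneracy of $F_{\bar\xi}$ all hold for $\eta$-a.e.\ $\bar\xi$ \emph{and} for $\eta$-a.e.\ $\theta\bar\xi$; I fix such a $\bar\xi$ throughout. Two preliminary facts. (i) The assumptions $0<F_{\bar\xi}(x)<1$ on $(0,\infty)$, $\lim_{x\to0}F_{\bar\xi}(x)=0$ and $\lim_{x\to\infty}F_{\bar\xi}(x)=1$ force $G(\bar\xi,x)\in(0,\infty)$ for every $x\in(0,1)$, and likewise for $G(\theta\bar\xi,\cdot)$. (ii) Since $G(\bar\xi,\cdot)$ is the left-continuous generalized inverse of $F_{\bar\xi}$ while $s\mapsto e^{-s}$ and $s\mapsto h_{\xi_0}(s)=-\log f_{\xi_0}^{(-1)}(e^{-s})$ are continuous and strictly monotone, the maps $s\mapsto G(\bar\xi,e^{-s})$ and $s\mapsto G(\theta\bar\xi,e^{-h_{\xi_0}(s)})$ are non-increasing and right-continuous on $(0,\infty)$.

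\emph{Proof of (\ref{di}) and (\ref{gth}).} The elementary identity $h_n(\bar\xi,s)=h_{n-1}(\theta\bar\xi,h_{\xi_0}(s))$ is immediate from the definitions of $h_n$ and $\theta$. Applying (\ref{2.3}) of part~(2) to $\bar\xi$ gives $U(\bar\xi,1/h_n(\bar\xi,s))/c_n(\bar\xi)\to G(\bar\xi,e^{-s})$ for all but countably many $s$, and applying it to $\theta\bar\xi$ with $s$ replaced by $h_{\xi_0}(s)$ gives $U(\theta\bar\xi,1/h_n(\bar\xi,s))/c_{n-1}(\theta\bar\xi)\to G(\theta\bar\xi,e^{-h_{\xi_0}(s)})$ for all but countably many $s$ (still countably many exceptions, since $h_{\xi_0}$ is a homeomorphism of $(0,\infty)$). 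Using $U(\bar\xi,\cdot)=U(\theta\bar\xi,\cdot)$ and dividing these two relations (both limits lie in $(0,\infty)$ by fact~(i)),
\[
\frac{c_{n-1}(\theta\bar\xi)}{c_n(\bar\xi)}=\frac{U(\bar\xi,1/h_n(\bar\xi,s))/c_n(\bar\xi)}{U(\bar\xi,1/h_n(\bar\xi,s))/c_{n-1}(\theta\bar\xi)}\ \longrightarrow\ \frac{G(\bar\xi,e^{-s})}{G(\theta\bar\xi,e^{-h_{\xi_0}(s)})}.
\]
The left-hand side does not depend on $s$, so $\alpha(\bar\xi):=\lim_n c_{n-1}(\theta\bar\xi)/c_n(\bar\xi)$ exists, lies in $(0,\infty)$, and equals the right-hand side for every $s$ outside a countable set; this is (\ref{di}) and (\ref{gth}) off a countable set. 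Finally, approaching an arbitrary $s_0\in(0,\infty)$ from the right along that (dense) set and using the right-continuity from fact~(ii) upgrades (\ref{gth}) to all $s\in(0,\infty)$.

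\emph{Proof of (\ref{xidi}).} Conditionally on $\bar\xi$ one has the decomposition $Z_n(\bar\xi)\overset{d}{=}\sum_{i=1}^N\widetilde Z_{n-1}^{(i)}$, where $N$ has law $\xi_0$, the $\widetilde Z_{n-1}^{(i)}$ are i.i.d.\ copies of $Z_{n-1}(\theta\bar\xi)$, and $N$ is independent of the copies. Under (A1) the process is non-decreasing and (by non-degeneracy of $F_{\bar\xi}$, since $U$ is slowly varying) $Z_n(\bar\xi)\to\infty$, so $\max_i\widetilde Z_{n-1}^{(i)}\to\infty$ $P_{\bar\xi}$-a.s.; combining the sandwich $\max_i\widetilde Z_{n-1}^{(i)}\leqslant\sum_i\widetilde Z_{n-1}^{(i)}\leqslant N\max_i\widetilde Z_{n-1}^{(i)}$ with $P_{\bar\xi}(N>M)\to0$ and the uniform convergence theorem for the slowly varying $U(\bar\xi,\cdot)$ on $[1,M]$ yields $U(\bar\xi,\sum_i\widetilde Z_{n-1}^{(i)})/U(\bar\xi,\max_i\widetilde Z_{n-1}^{(i)})\to1$ in $P_{\bar\xi}$-probability. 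As $U(\bar\xi,\cdot)$ is increasing, $U(\bar\xi,\max_i\widetilde Z_{n-1}^{(i)})=\max_i U(\bar\xi,\widetilde Z_{n-1}^{(i)})=\max_i U(\theta\bar\xi,\widetilde Z_{n-1}^{(i)})$, so
\[
\frac{U(\bar\xi,Z_n(\bar\xi))}{c_n(\bar\xi)}=\bigl(1+o_{P_{\bar\xi}}(1)\bigr)\,\frac{c_{n-1}(\theta\bar\xi)}{c_n(\bar\xi)}\,\max_{1\leqslant i\leqslant N}\frac{U(\theta\bar\xi,\widetilde Z_{n-1}^{(i)})}{c_{n-1}(\theta\bar\xi)}.
\]
By (\ref{di}) the prefactor tends to $\alpha(\bar\xi)$, while part~(2) applied to $\theta\bar\xi$ gives $U(\theta\bar\xi,\widetilde Z_{n-1}^{(i)})/c_{n-1}(\theta\bar\xi)\Rightarrow F_{\theta\bar\xi}$; since $N\sim\xi_0$ is independent of the copies, $\max_{1\leqslant i\leqslant N}(\cdot)$ converges in distribution to $\max_{1\leqslant i\leqslant N}V_i$ with $(V_i)$ i.i.d.\ $\sim F_{\theta\bar\xi}$, whose distribution function is $u\mapsto\sum_j\xi_0^{(j)}F_{\theta\bar\xi}(u)^j=f_{\xi_0}(F_{\theta\bar\xi}(u))$. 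By Slutsky's theorem $U(\bar\xi,Z_n(\bar\xi))/c_n(\bar\xi)\Rightarrow\alpha(\bar\xi)\max_{i\leqslant N}V_i$, and comparing with the assumed limit $F_{\bar\xi}$ together with uniqueness of distributional limits gives $F_{\bar\xi}(\alpha(\bar\xi)u)=f_{\xi_0}(F_{\theta\bar\xi}(u))$ for $0\leqslant u<\infty$, which is (\ref{xidi}).

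The main obstacle is the slow-variation reduction in the proof of (\ref{xidi}): pushing $U(\bar\xi,\cdot)$ through a sum of a \emph{random} number of terms requires truncating $N$ and then using uniformity of $U(\bar\xi,tx)/U(\bar\xi,x)\to1$ over $t\in[1,M]$ as $x\to\infty$, together with $\max_i\widetilde Z_{n-1}^{(i)}\to\infty$; one also has to assemble carefully the several modes of convergence (almost sure for $c_{n-1}(\theta\bar\xi)/c_n(\bar\xi)$ and for $\widetilde Z_{n-1}^{(i)}\to\infty$, in probability for the slow-variation correction, in distribution for the normalized maxima) and to confirm at the outset that $Z_n(\bar\xi)\to\infty$ $P_{\bar\xi}$-a.s.\ and that $h_{\xi_0}$ is a homeomorphism of $(0,\infty)$, so that the change of variable $s\mapsto h_{\xi_0}(s)$ leaves all exceptional sets countable.
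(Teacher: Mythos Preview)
Your argument for (\ref{di}) and (\ref{gth}) coincides with the paper's: both use the recursion $h_n(\bar\xi,s)=h_{n-1}(\theta\bar\xi,h_{\xi_0}(s))$, apply part~(2) to $\bar\xi$ and to $\theta\bar\xi$, divide, observe the left side is independent of $s$, and then extend from a co-countable set to all of $(0,\infty)$ by one-sided continuity of $G$. (The paper phrases the last step via left-continuity of $G(\bar\xi,\cdot)$ rather than right-continuity of $s\mapsto G(\bar\xi,e^{-s})$, which is the same thing.)

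For (\ref{xidi}) you take a genuinely different route. The paper's argument is analytic: it combines parts~(1) and~(2) to get $U(\bar\xi,Z_n)/c_n\to G(\bar\xi,Y(\bar\xi))$ a.s., then computes $F_{\bar\xi}(\alpha u)=P_{\bar\xi}\bigl(G(\bar\xi,e^{-T})\le\alpha u\bigr)$ and, using (\ref{gth}), reduces this to $P_{\bar\xi}\bigl(Y(\bar\xi)\le f_{\xi_0}(F_{\theta\bar\xi}(u))\bigr)$. The key step is showing $-\log f_{\xi_0}(F_{\theta\bar\xi}(u))$ is a $\bar\xi$-\emph{regular} point (via Lemma~\ref{case}, closedness of the regular set from Lemma~\ref{open}, and Remark~\ref{rm00}), so that Theorem~\ref{yu} gives $P_{\bar\xi}(Y\le x_r)=x_r$ and the identity drops out. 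Your argument is instead purely probabilistic: the branching decomposition $Z_n=\sum_{i\le N}\widetilde Z_{n-1}^{(i)}$, slow variation to replace the sum by the max, and then the elementary formula for the distribution of $\max_{i\le N}V_i$ with i.i.d.\ $V_i\sim F_{\theta\bar\xi}$. This is correct and more self-contained---it uses neither part~(1), nor the regular/irregular classification, nor Theorem~\ref{yu}---whereas the paper's proof has the virtue of exhibiting how (\ref{xidi}) is forced by the regular-point structure already built. The price you pay is the ``slow variation through a random sum'' step: truncating $N$ at $M$, invoking the uniform convergence theorem on $[1,M]$, and assembling the a.s./in-probability/in-distribution pieces via Slutsky, exactly as you flag at the end; this is sound but is the one place where care is genuinely needed. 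Note also that your proof of (\ref{xidi}) does not invoke part~(1) at all, while the paper's does.
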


The  paper is arranged as the following. All the above results will be proved in section {\ref{ms}}.  Based on  Theorem {\ref{t1}} (which will be proved in section {\ref{ms1}}), we will give the classification for $s\in (0,\infty)$ as the regular and irregular point in section {\ref{srir}}, and  some facts for the regular and irregular point will be pointed out, but the proof  will omit as it is similar as those in  Schuh and Barbour ($\cite{SB77}$). In section {\ref{ms3}}, a proper limit will be obtained when $Z_n$ is  normalized by a sequence of functions, i.e., Theorem {\ref{yu}} will be proved. In section {\ref{ms4}}, we will discuss the necessary and sufficient conditions for the almost sure convergence of $\left\{\frac{U(\bar{\xi},Z_{n}(\bar{\xi}))}{c_n(\bar{\xi})}\right\}$, where $U(\bar{\xi})$ is a slowly varying function that may depends on $\bar{\xi}$, i.e.,
Theorem {\ref{t2}} will be proved.  In section \ref{sms}, we will give a sufficient  criteria for regular process, and finally we give an Example \ref{exm} to illustrate our results in this paper.

\section{ Proofs}\label{ms}

\subsection{Proof of Theorem $\ref{t1}$}\label{ms1}

\begin{proof}
\noindent $\left( \text{i} \right)$ Denote by $\mathscr{F}_{n}(\bar{\xi})$ the $\sigma$-field generated by $Z_{0},Z_{1}, \cdots, Z_{n}$ and $\bar{\xi}$, let
\begin{eqnarray}
X_{n}(\bar{\xi},s):=e^{-Z_{n}(\bar{\xi})h_{n}(\bar{\xi},s)}.
\end{eqnarray}
Then it is easy to check that $\left\{X_{n}(\bar{\xi},s),\mathscr{F}_{n}(\bar{\xi})\right\}_{n=0}^{\infty}$ is a martingale bounded between 0 and 1, by the Martingale Convergence Theorem,
\begin{eqnarray*}
X(\bar{\xi},s):=\lim_{n\to\infty}X_{n}(\bar{\xi},s)~~~~~\text{exists   w.p.1}.
\end{eqnarray*}
By the property of martingale,
\begin{eqnarray*}
\mathbb{E}(X_{n}(\bar{\xi},s)|\bar{\xi})=\mathbb{E}(X_{0}(\bar{\xi},s)|\bar{\xi})=e^{-s}
\end{eqnarray*}
\begin{eqnarray}
\mathbb{E}(X(\bar{\xi},s)|\bar{\xi})=\lim_{n\to\infty}\mathbb{E}(X_{n}(\bar{\xi},s)|\bar{\xi})=e^{-s}
\end{eqnarray}
Therefore for any $s\in (0,\infty),~ Z_n(\bar{\xi})h_{n}(\bar{\xi},s)$ converges to $W(\bar{\xi},s):=-\log X(\bar{\xi},s)~~w.p.1.$

\noindent $\left( \text{ii} \right)$
Let $W_{n}(\bar{\xi},s)=Z_{n}(\bar{\xi})h_{n}(\bar{\xi},s),$ then
$X_{n}(\bar{\xi},s)^u=e^{-uW_{n}(\bar{\xi},s)}=e^{-uZ_{n}(\bar{\xi})h_n(\bar{\xi},s)}$,
\begin{eqnarray}\label{pass}
 E_{\bar{\xi}}\left[ X_{n}(\bar{\xi},s)^u\right]&=& E_{\bar{\xi}}\left[e^{-uZ_{n}(\bar{\xi})h_n(\bar{\xi},s)}\right]\nonumber\\
&=& f_{\xi_{0}}\left(\cdots\left(f_{\xi_{n-1}}\left(e^{-uh_{n}(\bar{\xi},s)}\right)\right)\cdots\right)\nonumber\\
&=& f_{\xi_{0}}\left(f_{\xi_{1}}\left(\cdots\left(f_{\xi_{n-1}}\left(e^{-uh_{n-1}(\theta\bar{\xi},h_{\xi_{0}}(s))}\right)\right)\cdots\right)\right)\nonumber\\
&=& f_{\xi_{0}}\left(E_{\theta\bar{\xi}}\left(X_{n-1}(\theta\bar{\xi},h_{\xi_{0}}(s))^u\right)\right)
\end{eqnarray}
Let $\chi(u;\bar{\xi},s)=E_{\bar{\xi}}(X(\bar{\xi},s)^u)$, then $n$ goes to infinity in (\ref{pass}) yields
\begin{eqnarray}\label{x1}
\chi(u;\bar{\xi},s)=f_{\xi_{0}}\left(\chi(u;\theta\bar{\xi},h_{\xi_{0}}(s))\right).
\end{eqnarray}
It is easily seen that
\begin{eqnarray}\label{x2}
\lim_{u\downarrow 0}\chi(u;\bar{\xi},s)=\lim_{u\downarrow 0}E_{\bar{\xi}}\left(e^{-uW(\bar{\xi},s)}\right)=P_{\bar{\xi}}\left(W(\bar{\xi},s)<\infty\right).
\end{eqnarray}
Using the functional relation $(\ref{x1})$ with equation $(\ref{x2})$ yields:
\begin{eqnarray}\label{x3}
P_{\bar{\xi}}\left(W(\bar{\xi},s)<\infty\right)=f_{\xi_{0}}\left(P_{\theta\bar{\xi}}(W(\theta\bar{\xi},h_{\xi_{0}}(s))<\infty)\right).
\end{eqnarray}
Let
\beqnn
A=\left\{\bar{\xi}:\text{there exists s such that } P_{\bar{\xi}}(W(\bar{\xi},s)<\infty)=1\right\}.
\eeqnn
Combined with $(\ref{x3})$ and the property of $f_{\xi_{0}}(s)$, we have $\theta A=A$, $i.e.$ A is a $\theta$-invariant set, since $\theta$ is ergodic,
\begin{eqnarray}\label{01}
\eta(A)=0~~~\text{or}~~~1.
\end{eqnarray}
Then we only need to prove $\eta(A)\neq 1.$

Since
\begin{eqnarray}\label{x5}
E_{\bar{\xi}}\left[X_{n}(\bar{\xi},s)^u\right]=
f_{\xi_{0}}\left(f_{\xi_{1}}\left(\cdots\left(f_{\xi_{n-1}}\left(e^{-uh_{n-1}(\theta\bar{\xi},s)\frac{h_{n}(\bar{\xi},s)}{h_{n-1}(\theta\bar{\xi},s)}}\right)\right)\cdots\right)\right),
\end{eqnarray}
let $n\to\infty$, we have
\begin{eqnarray}\label{x6}
\chi(u;\bar{\xi},s)=f_{\xi_{0}}\left(\chi(ud(\bar{\xi},s);\theta\bar{\xi},s)\right).
\end{eqnarray}
Consequently for any $\bar{\xi}\in D,~0<s<\infty$, $u>0$, we can get that
\beqnn
\chi(u;\bar{\xi},s)=f_{\xi_{0}}\left(\chi(0;\theta\bar{\xi},s)\right).
\eeqnn
 Using the fact that
\beqnn
\lim_{u\uparrow\infty}\chi(u;\bar{\xi},s)=\lim_{u\uparrow \infty}E_{\bar{\xi}}\left(e^{-uW(\bar{\xi},s)}\right)=P_{\bar{\xi}}\left(W(\bar{\xi},s)=0\right),
\eeqnn
we have
\begin{eqnarray}\label{x4}
\text{for any } \bar{\xi}\in D,~~P_{\bar{\xi}}\left(W(\bar{\xi},s)=0\right)=f_{\xi_{0}}\left(\chi(0;\theta\bar{\xi},s)\right).
\end{eqnarray}
Note that $E_{\bar{\xi}}\left(e^{-W(\bar{\xi},s)}\right)=e^{-s}$, which implies $P_{\bar{\xi}}\left(W(\bar{\xi},s)=0\right)<1$. From (\ref{x4}) we have $\chi\left(0;\theta\bar{\xi},s\right)<1$, thus
\beqnn
\text{for any } \bar{\xi}\in D,~~P_{\theta\bar{\xi}}\left(W(\theta\bar{\xi},s)=\infty\right)>0.
\eeqnn
This means for any $\bar{\xi}\in D,~\theta\bar{\xi}\in A^{c}$. Hence
\beqnn
\eta(A)=1-\eta(A^{c}) \leqslant 1-\eta(\theta D)=1-\eta(D)<1,
\eeqnn
recall (\ref{01}), we have  $\eta(A)=0,~i.e.,$
\beqnn
\text{for any }s\in(0,\infty),~~~P_{\bar{\xi}}\left(W(\bar{\xi},s)=\infty\right)>0,~~~\eta\text{-a.e.}.
\eeqnn

\noindent $\left( \text{iii} \right)$
Let
\beqnn
B=\left\{\bar{\xi}:\text{there exists s such that } P_{\bar{\xi}}(W(\bar{\xi},s)=0)=0\right\}.
\eeqnn
Similar to (ii) we can get that
\begin{eqnarray}\label{x7}
P_{\bar{\xi}}\left(W(\bar{\xi},s)=0\right)=f_{\xi_{0}}\left(P_{\theta\bar{\xi}}(W(\theta\bar{\xi},h_{\xi_{0}}(s))=0)\right),
\end{eqnarray}
\begin{eqnarray}\label{x8}
\chi\left(\frac{u}{d(\bar{\xi},s)};\bar{\xi},s\right)=f_{\xi_{0}}\left(\chi(u;\theta\bar{\xi},s)\right).
\end{eqnarray}
From $(\ref{x7})$, we know that $\theta B=B$, then $\eta(B)=0 ~~or~~ 1$.

For any $\bar{\xi}\in D,~0<s<\infty$, from $(\ref{x8})$, we get for any $u>0$,
\beqnn
f_{\xi_{0}}\left(\chi(u;\theta\bar{\xi},s)\right)=\chi\left(\infty;\bar{\xi},s\right).
\eeqnn
Let $u$ goes to 0, we have
\begin{eqnarray}\label{x9}
f_{\xi_{0}}\left(P_{\theta\bar{\xi}}(W(\theta\bar{\xi},s)<\infty)\right)=\chi\left(\infty;\bar{\xi},s\right).
\end{eqnarray}
We note that $E_{\theta\bar{\xi}}\left(e^{-W(\theta\bar{\xi},s)}\right)=e^{-s}$ implies  $P_{\theta\bar{\xi}}(W(\theta\bar{\xi},s)<\infty)>0$. Combined with (\ref{x9}) we have $\chi(\infty;\bar{\xi},s)>0$, thus
\beqnn
\text{for any } \bar{\xi}\in D,~0<s<\infty,~~P_{\bar{\xi}}\left(W(\bar{\xi},s)=0\right)>0.
\eeqnn
This means for any $\bar{\xi}\in D,~\bar{\xi}\in B^{c}$. Then
\beqnn
\eta(B)\leqslant 1-\eta(D)<1.
\eeqnn
 Accordingly $\eta(B)=0,~i.e.,$
\beqnn
\text{for any }s\in(0,\infty),~~~P_{\bar{\xi}}\left(W(\bar{\xi},s)=0\right)>0,~~~\eta\text{-a.e.}.
\eeqnn
\qed
\end{proof}

 \subsection{$\bar{\xi}$-regular and $\bar{\xi}$-irregular points }\label{srir}

From Theorem $\ref{t1}$, we know that for any $s\in(0,\infty)$,
\beqnn
P_{\bar{\xi}}\left(\displaystyle\lim_{n}Z_{n}(\bar{\xi})
h_{n}(\bar{\xi},s)=\infty\right)>0,~~P_{\bar{\xi}}\left(\displaystyle\lim_{n}Z_{n}(\bar{\xi})
h_{n}(\bar{\xi},s)=0\right)>0~~~\eta\text{-a.e.,}
\eeqnn
then it is necessary to distinguish between two types of points from the following definition.

\begin{definition}\label{rir}
A point $s\in(0,\infty)$ is called $\bar{\xi}$-regular if $P_{\bar{\xi}}\left(W(\bar{\xi},s)\in\{0,\infty\}\right)=1$,
 and $\bar{\xi}$-irregular otherwise.
\end{definition}

We can get the following theorem which gives a necessary and sufficient condition for a point to be regular. The proof is almost the same as that of Theorem 1.1.2 in $\cite{SB77}$, we omit the details.
\begin{theorem}\label{re1}
$s\in(0,\infty)$ is $\bar{\xi}$-regular if and only if $\displaystyle\lim_{n\to\infty}\frac{h_{n}(\bar{\xi},t)}{h_{n}(\bar{\xi},s)}=0$ for all $0<t<s,($or equivalently $\displaystyle\lim_{n\to\infty}\frac{h_{n}(\bar{\xi},t)}{h_{n}(\bar{\xi},s)}=\infty$ for all $s<t<\infty).$
\end{theorem}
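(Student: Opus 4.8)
\textbf{Proof proposal for Theorem \ref{re1}.}

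The plan is to characterize $\bar{\xi}$-regularity of a point $s$ via the behaviour of $W(\bar{\xi},t)$ for $t<s$, using the key identity already established in the proof of Theorem \ref{t1}. Recall that $Z_n(\bar{\xi})h_n(\bar{\xi},s)$ converges w.p.1 to $W(\bar{\xi},s)=-\log X(\bar{\xi},s)$. The decisive observation is that for fixed $\bar{\xi}$, the map $s\mapsto W(\bar{\xi},s)$ is (up to the trivial change of scale coming from the ratio $h_n(\bar{\xi},t)/h_n(\bar{\xi},s)$) a deterministic rescaling of one single random limit: for $0<t<s$ one has
\begin{eqnarray*}
W(\bar{\xi},t)=\lim_{n}Z_n(\bar{\xi})h_n(\bar{\xi},t)=\lim_{n}\frac{h_n(\bar{\xi},t)}{h_n(\bar{\xi},s)}\,Z_n(\bar{\xi})h_n(\bar{\xi},s),
\end{eqnarray*}
so that whenever $\rho(\bar{\xi},t,s):=\lim_n h_n(\bar{\xi},t)/h_n(\bar{\xi},s)$ exists in $[0,\infty]$ we get $W(\bar{\xi},t)=\rho(\bar{\xi},t,s)\,W(\bar{\xi},s)$ on the event $\{0<W(\bar{\xi},s)<\infty\}$. (Existence of this ratio limit should be extracted from monotonicity of $h_{\xi_i}$ together with the subadditive/multiplicative structure of the $h_n$'s, as in Tanny; since $h_{\xi_i}$ is increasing and concave-type, the sequence $h_n(\bar{\xi},t)/h_n(\bar{\xi},s)$ is monotone in $n$ and hence converges.)

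For the ``if'' direction, suppose $\lim_n h_n(\bar{\xi},t)/h_n(\bar{\xi},s)=0$ for all $0<t<s$. We must show $P_{\bar{\xi}}(W(\bar{\xi},s)\in\{0,\infty\})=1$, i.e. $P_{\bar{\xi}}(0<W(\bar{\xi},s)<\infty)=0$. Suppose not; then there is a set of positive $P_{\bar{\xi}}$-measure on which $W(\bar{\xi},s)$ is finite and strictly positive, hence $W_n(\bar{\xi},s)=Z_n(\bar{\xi})h_n(\bar{\xi},s)$ stays bounded above and below. On that event, for every $t<s$ we would get $W(\bar{\xi},t)=\lim_n \frac{h_n(\bar{\xi},t)}{h_n(\bar{\xi},s)}W_n(\bar{\xi},s)=0$. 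Letting $t\uparrow s$ and using that $s\mapsto W(\bar{\xi},s)$ is nondecreasing (since $h_n(\bar{\xi},\cdot)$ is increasing) together with the normalization $E_{\bar{\xi}}(e^{-W(\bar{\xi},s)})=e^{-s}$, which forces $W(\bar{\xi},\cdot)$ to genuinely grow, yields a contradiction: the family $\{W(\bar{\xi},t):0<t<s\}$ cannot be identically zero on a positive-probability set while $W(\bar{\xi},s)$ has Laplace transform $e^{-s}$. (More carefully: one shows $P_{\bar{\xi}}(W(\bar{\xi},t)=0)$ is right-continuous and nondecreasing in $t$ and cannot jump from $<1$ to the value it would need.) Conversely, for ``only if'', assume $s$ is $\bar{\xi}$-regular, so $W(\bar{\xi},s)\in\{0,\infty\}$ a.s., and suppose for contradiction that for some $t_0<s$ the ratio $\rho:=\lim_n h_n(\bar{\xi},t_0)/h_n(\bar{\xi},s)$ is strictly positive (it is automatically $\le 1$). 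On the event $\{W(\bar{\xi},t_0)<\infty\}$, which by Theorem \ref{t1} applied at $t_0$ (or by the normalization $E_{\bar{\xi}}e^{-W(\bar{\xi},t_0)}=e^{-t_0}>0$) has positive $P_{\bar{\xi}}$-probability, boundedness of $W_n(\bar{\xi},t_0)$ forces $W_n(\bar{\xi},s)=\rho^{-1}(1+o(1))W_n(\bar{\xi},t_0)$ to be bounded, hence $W(\bar{\xi},s)<\infty$ on a positive-probability set, so $P_{\bar{\xi}}(W(\bar{\xi},s)=0)>0$ is not enough — we also need $P_{\bar{\xi}}(W(\bar{\xi},s)=\infty)=1$ to be violated; combining with Theorem \ref{t1} which gives $P_{\bar{\xi}}(W(\bar{\xi},s)=\infty)>0$ and $P_{\bar{\xi}}(W(\bar{\xi},s)=0)>0$, one deduces $P_{\bar{\xi}}(0<W(\bar{\xi},s)<\infty)>0$, contradicting regularity.

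Finally, the equivalence of the two displayed conditions (limit $0$ for all $t<s$ versus limit $\infty$ for all $t>s$) is immediate from $h_n(\bar{\xi},t)/h_n(\bar{\xi},s)=\left(h_n(\bar{\xi},s)/h_n(\bar{\xi},t)\right)^{-1}$ and the monotonicity of $h_n(\bar{\xi},\cdot)$, together with the dichotomy that for each pair the ratio limit is either $0$ or bounded away from $0$; one checks that $\rho(\bar{\xi},t,s)>0$ for some $t<s$ forces, via the cocycle relation $\rho(\bar{\xi},t,s)=\rho(\bar{\xi},t,r)\rho(\bar{\xi},r,s)$, that $\rho(\bar{\xi},\cdot,s)$ is positive on a whole interval $(t,s)$, and symmetrically on the other side. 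I expect the main obstacle to be the clean justification of the existence and cocycle property of the ratio limit $\lim_n h_n(\bar{\xi},t)/h_n(\bar{\xi},s)$ and the passage $t\uparrow s$ in the ``if'' direction — i.e. ruling out the degenerate possibility that $W(\bar{\xi},t)\equiv 0$ for all $t<s$ yet $W(\bar{\xi},s)$ is non-trivially distributed — which is exactly where the structure of $h_{\xi_i}$ (monotone, and the infinite-mean condition hidden in (A2)) must be used rather than soft martingale arguments; but since the paper explicitly defers to the proof of Theorem 1.1.2 in \cite{SB77}, the environment-wise argument should transcribe verbatim once $P_{\bar{\xi}}$ replaces $P$ and $h_n(\bar{\xi},\cdot)$ replaces $h_n(\cdot)$.
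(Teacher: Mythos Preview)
Your approach differs from the paper's (which defers to Schuh--Barbour \cite{SB77}), and both directions as you have written them contain genuine gaps.

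The paper's method, visible in the later proof of Theorem~\ref{zxy}, is to go through the Laplace transform: since $E_{\bar{\xi}}\big[e^{-xW_n(\bar{\xi},s)}\big]=\exp\{-k_n(\bar{\xi},xh_n(\bar{\xi},s))\}$, one has that $s$ is $\bar{\xi}$-regular iff $\chi(x;\bar{\xi},s)=E_{\bar{\xi}}e^{-xW(\bar{\xi},s)}$ is constant in $x>0$, i.e.\ iff $\lim_n k_n(\bar{\xi},xh_n(\bar{\xi},s))=s$ for all $x>0$. The ratio condition then falls out immediately from the inverse relation between $k_n$ and $h_n$: for $x\in(0,1)$ and $t<s$, $k_n(\bar{\xi},xh_n(\bar{\xi},s))>t$ is the same as $h_n(\bar{\xi},t)/h_n(\bar{\xi},s)<x$. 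This is clean and avoids the limit-interchange issues you flag.

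In your ``only if'' argument the deduction ``one deduces $P_{\bar{\xi}}(0<W(\bar{\xi},s)<\infty)>0$'' does not follow from what precedes it: showing $W(\bar{\xi},s)<\infty$ on a set of positive probability is compatible with $W(\bar{\xi},s)=0$ there. The correct completion is different: if $\rho=\rho(\bar{\xi},t_0,s)\in(0,1]$ then $W(\bar{\xi},t_0)=\rho\,W(\bar{\xi},s)$ a.s., so regularity of $s$ forces $W(\bar{\xi},t_0)\in\{0,\infty\}$ a.s.\ with $P_{\bar{\xi}}(W(\bar{\xi},t_0)=0)=P_{\bar{\xi}}(W(\bar{\xi},s)=0)$. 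But for a $\{0,\infty\}$-valued limit the martingale identity gives $P_{\bar{\xi}}(W(\bar{\xi},t)=0)=E_{\bar{\xi}}e^{-W(\bar{\xi},t)}=e^{-t}$, yielding $e^{-t_0}=e^{-s}$, a contradiction.

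Your ``if'' direction has the gap you yourself identify, and it is real: from $\rho(\bar{\xi},t,s)=0$ for all $t<s$ you only obtain that on $\{0<W(\bar{\xi},s)<\infty\}$ one has $W(\bar{\xi},t)=0$ for $t<s$ and $W(\bar{\xi},t)=\infty$ for $t>s$; nothing in the martingale normalizations alone rules this out, and the monotone limit $t\uparrow s$ does not help because $W(\bar{\xi},\cdot)$ has no a~priori left-continuity. The Laplace-transform route above is exactly what closes this: the condition $\rho(\bar{\xi},t,s)=0$ for all $t<s$ forces $k_n(\bar{\xi},xh_n(\bar{\xi},s))\to s$ for every $x\in(0,1)$ (and symmetrically for $x>1$), hence $\chi(x;\bar{\xi},s)\equiv e^{-s}$, which is precisely $P_{\bar{\xi}}(W(\bar{\xi},s)\in\{0,\infty\})=1$. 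Your remark that the ratio $h_n(\bar{\xi},t)/h_n(\bar{\xi},s)$ is monotone in $n$ is correct (each $h_{\xi_i}$ is convex with $h_{\xi_i}(0)=0$, since $k_{\xi_i}(s)=-\log E_{\xi_i}e^{-sZ_1}$ is concave by log-convexity of Laplace transforms), so the ratio limit does exist; but that alone does not rescue the direct argument.
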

\qed

\begin{remark}\label{rm00}

\noindent From Theorem $\ref{re1}$ and the fact that
\beqnn
h_{n}(\bar{\xi},t)=h_{n-k}\left(\theta^{k}\bar{\xi},h_{k}(\bar{\xi},t)\right),
\eeqnn
we know that
if $s\in(0,\infty)$ is $\bar{\xi}$-regular (irregular), then $h_{k}(\bar{\xi},s)$ is $\theta^{k}\bar{\xi}$-regular (irregular).\qed

\end{remark}

\begin{lemma}\label{open}
The set of the irregular points is open. More precisely, if $s_{i}$ is $\bar{\xi}$-irregular, then an open interval $I(\bar{\xi},s_{i})=(s_{1},s_{2})$ of maximal length exists, such that $s_{i}\in I(\bar{\xi},s_{i})$ and all
$s\in I(\bar{\xi},s_{i})$ are $\bar{\xi}$-irregular.

If we define $l(\bar{\xi},s)=\lim_{n\to\infty}\left(h_{n}(\bar{\xi},s)/h_{n}(\bar{\xi},s_{i})\right)$ for all $s\in[s_{1},s_{2}]$ then $l(\bar{\xi},s)$ is a continuous, strictly increasing function with $l(\bar{\xi},s_{1})=0$ and $l(\bar{\xi},s_{2})=\infty$. $s_{1}$ and $s_{2}$ are both $\bar{\xi}$-regular.
\end{lemma}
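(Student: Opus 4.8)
The plan is to prove Lemma \ref{open} in two stages: first establish that the irregular set is open and locate the maximal interval $I(\bar{\xi},s_i)$, then analyze the limit ratio function $l(\bar{\xi},s)$ on the closed interval and deduce regularity of the endpoints. Throughout I will lean on Theorem \ref{re1}, which characterizes $\bar{\xi}$-regularity of $s$ by the vanishing (or blow-up) of $h_n(\bar{\xi},t)/h_n(\bar{\xi},s)$ for $t$ on one side of $s$, and on the basic monotonicity that each $h_{\xi_i}$, hence each $h_n(\bar{\xi},\cdot)$, is continuous and strictly increasing in its argument, so that $t\mapsto h_n(\bar{\xi},t)/h_n(\bar{\xi},s_i)$ is itself increasing for every fixed $n$.

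First I would show openness. Suppose $s_i$ is $\bar{\xi}$-irregular. By Theorem \ref{re1} (in the equivalent blow-up form), irregularity of $s_i$ means there is some $t_0<s_i$ with $\liminf_n h_n(\bar{\xi},t_0)/h_n(\bar{\xi},s_i)>0$, and some $t_1>s_i$ with $\limsup_n h_n(\bar{\xi},t_1)/h_n(\bar{\xi},s_i)<\infty$; I would phrase this via the candidate limit ratios rather than liminf/limsup once I verify the limits exist. For any $s$ strictly between such $t_0$ and $t_1$, the monotonicity of $h_n(\bar{\xi},\cdot)$ forces $h_n(\bar{\xi},t_0)\le h_n(\bar{\xi},s)\le h_n(\bar{\xi},t_1)$ for all $n$, so the sequences $h_n(\bar{\xi},s)/h_n(\bar{\xi},s_i)$ are squeezed and stay bounded away from $0$ and $\infty$ — hence every such $s$ is again irregular. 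Thus the irregular component of $s_i$ is an interval; taking the union of all open irregular intervals containing $s_i$ (or equivalently $s_1=\inf$, $s_2=\sup$ of the connected irregular component) gives the maximal open interval $I(\bar{\xi},s_i)=(s_1,s_2)$.

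Next I would handle existence and properties of $l(\bar{\xi},s)=\lim_n h_n(\bar{\xi},s)/h_n(\bar{\xi},s_i)$ on $[s_1,s_2]$. For existence: I would show the ratio $R_n(s):=h_n(\bar{\xi},s)/h_n(\bar{\xi},s_i)$ is monotone in $n$ for each fixed $s$ — this should follow from a subadditivity/superadditivity argument analogous to the one giving the limit $d(\bar{\xi},s)$ in the notations section and to Schuh--Barbour's treatment. Writing $h_{n+1}(\bar{\xi},s)=h_{\xi_n}(\cdots h_{\xi_0}(s)\cdots)$ and using concavity/convexity properties of the iterated maps (the $h_{\xi_i}$ are concave since the $f_{\xi_i}$ are convex, so $h_{\xi_i}(x)/x$ is decreasing), one gets $R_{n+1}(s)\le R_n(s)$ for $s>s_i$ and $\ge$ for $s<s_i$, or some such monotone comparison; combined with the two-sided bounds from the squeeze above on $(s_1,s_2)$, the limit $l(\bar{\xi},s)$ exists and lies in $(0,\infty)$ for $s\in(s_1,s_2)$. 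Strict monotonicity of $s\mapsto l(\bar{\xi},s)$: each $R_n$ is strictly increasing, and I would upgrade this to strict monotonicity of the limit by a convexity/Harris-inequality type argument — if $l(\bar{\xi},s)=l(\bar{\xi},s')$ for $s<s'$ both in the interval, then $s'$ would be $\bar{\xi}$-regular relative to $s$ (via Theorem \ref{re1}), forcing $s'\in\{s_1,s_2\}$, contradiction. Continuity of $l$ on $[s_1,s_2]$ would come from monotonicity together with the functional relation inherited from Theorem \ref{re1}: $l(\bar{\xi},\cdot)$ satisfies a self-similarity constraint under composition with $h_{\xi_0}$ (mirroring equations like $d(\bar{\xi},s)=$ limit of a ratio), and a monotone increasing solution with no jumps is forced; alternatively, a jump of $l$ at some interior $s^\*$ would create a gap $(l(\bar{\xi},s^{*-}),l(\bar{\xi},s^{*+}))$ not attained, again contradicting that intermediate values correspond to irregular points in the interval. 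Finally $l(\bar{\xi},s_1)=0$ and $l(\bar{\xi},s_2)=\infty$: if $l(\bar{\xi},s_1)>0$, then by the squeeze argument a slightly smaller point than $s_1$ would still be irregular, contradicting maximality of $(s_1,s_2)$; similarly $l(\bar{\xi},s_2)<\infty$ would extend the interval past $s_2$. And regularity of the endpoints themselves is then immediate from Theorem \ref{re1}: for $t<s_1$, $h_n(\bar{\xi},t)/h_n(\bar{\xi},s_1)\to 0$ because otherwise $t$ would be in the irregular component; likewise $h_n(\bar{\xi},t)/h_n(\bar{\xi},s_2)\to\infty$ for $t>s_2$.

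The main obstacle I anticipate is the rigorous justification of monotonicity of $R_n(s)$ in $n$ and hence of the existence of the limit $l(\bar{\xi},s)$ away from the reference point $s_i$, together with the continuity statement. The boundedness on $(s_1,s_2)$ is cheap (pure squeezing), but pinning down that $l$ is genuinely a continuous strictly increasing bijection onto $(0,\infty)$ requires the concavity of the iterates $h_{\xi_i}$ and a careful passage to the limit in the associated functional equation — this is exactly the technical heart of the corresponding Schuh--Barbour lemma, now carried out quenched in the random environment. Since the excerpt explicitly says the proofs in this subsection mirror Schuh--Barbour $\cite{SB77}$, I would present the argument at the level of detail above and cite $\cite{SB77}$ for the routine analytic estimates, emphasizing only the points where the random-environment composition $h_n(\bar{\xi},\cdot)=h_{\xi_{n-1}}\circ\cdots\circ h_{\xi_0}$ (rather than an $n$-fold iterate of a single map) enters, namely in Remark \ref{rm00}'s shift relation $h_n(\bar{\xi},t)=h_{n-k}(\theta^k\bar{\xi},h_k(\bar{\xi},t))$, which is what keeps all the monotonicity and squeeze arguments valid verbatim.
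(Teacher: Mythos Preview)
The paper itself gives no proof of this lemma at all: it simply says ``The proof is similar as Lemma 1.1.5 in \cite{SB77}, we omit the details.'' So your outline is already far more detailed than what the paper provides, and it is indeed the Schuh--Barbour argument transported to the quenched setting, which is exactly what the paper intends.

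Two points of correction are worth making. First, a sign error: each $k_{\xi_i}(s)=-\log f_{\xi_i}(e^{-s})$ is \emph{concave}, so its inverse $h_{\xi_i}$ is \emph{convex}, not concave. Consequently $h_{\xi_i}(x)/x$ is increasing, and the monotonicity of $R_n(s)=h_n(\bar\xi,s)/h_n(\bar\xi,s_i)$ in $n$ goes the other way from what you wrote: $R_n(s)$ is nonincreasing for $s<s_i$ and nondecreasing for $s>s_i$. This does not damage the argument --- the limit $l(\bar\xi,s)\in[0,\infty]$ still exists by monotone convergence, and since $R_n(s)\le 1$ for $s<s_i$ and $R_n(s)\ge 1$ for $s>s_i$, finiteness and positivity on $(s_1,s_2)$ follow as you say from irregularity via Theorem~\ref{re1}.

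Second, your argument for \emph{strict} monotonicity (and continuity) of $l$ is not quite right: $l(\bar\xi,s)=l(\bar\xi,s')$ does not by itself force regularity of $s'$ through Theorem~\ref{re1}. The clean route, which is what Schuh--Barbour actually use, is to observe that on $(s_1,s_2)$ one has $W(\bar\xi,s)=l(\bar\xi,s)\,W(\bar\xi,s_i)$ almost surely (immediate from $Z_n h_n(\bar\xi,s)=R_n(s)\cdot Z_n h_n(\bar\xi,s_i)$), and then the identity $E_{\bar\xi}\big(e^{-W(\bar\xi,s)}\big)=e^{-s}$ reads $\chi\big(l(\bar\xi,s);\bar\xi,s_i\big)=e^{-s}$ with $\chi(u;\bar\xi,s_i)=E_{\bar\xi}\big(e^{-uW(\bar\xi,s_i)}\big)$. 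Since $W(\bar\xi,s_i)$ is nondegenerate on $(0,\infty)$ at an irregular point, $\chi(\cdot;\bar\xi,s_i)$ is strictly decreasing and continuous, so $l(\bar\xi,\cdot)$ is its inverse composed with $s\mapsto e^{-s}$ --- hence continuous and strictly increasing, with $l(\bar\xi,s_1)=0$, $l(\bar\xi,s_2)=\infty$ forced by the range of $\chi$. Endpoint regularity then drops out exactly as you describe.
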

The proof is similar as Lemma 1.1.5 in $\cite{SB77}$, we omit the details.
\qed

\begin{lemma}\label{er} Under condition (A2) (i.e., for any $\bar{\xi}$ that satisfies for any $s\in(0,\infty),~d(\bar{\xi},s)=0$), every interval $[h_{\xi_{0}}(s),s],~0<s<\infty$, contains at least one $\theta\bar{\xi}$-regular point $s_{r}$.
\end{lemma}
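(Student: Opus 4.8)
The plan is to argue by contradiction, using only the two structural facts already established: Theorem~\ref{re1} (the characterization of $\bar{\xi}$-regular points through the ratios $h_n(\bar{\xi},t)/h_n(\bar{\xi},s)$) and Lemma~\ref{open} (the irregular set is open, and on a maximal irregular interval the ratio-limit $l$ is continuous, strictly increasing, and runs from $0$ to $\infty$). Fix an environment $\bar{\xi}$ with $d(\bar{\xi},s)=0$ for all $s\in(0,\infty)$, fix $s\in(0,\infty)$, and suppose toward a contradiction that every point of $[h_{\xi_{0}}(s),s]$ is $\theta\bar{\xi}$-irregular. Note first that $h_{\xi_{0}}(s)\le s$, since a probability generating function is convex and fixes $0$ and $1$, so $f_{\xi_{0}}(x)\le x$ on $(0,1)$; and $h_{\xi_{0}}(s)\in(0,\infty)$ for $s\in(0,\infty)$.

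The one computation I need is the semigroup identity for the iterates: from $h_{n+1}(\bar{\xi},s)=h_{\xi_{n}}\bigl(\cdots h_{\xi_{1}}(h_{\xi_{0}}(s))\cdots\bigr)=h_{n}\bigl(\theta\bar{\xi},h_{\xi_{0}}(s)\bigr)$ one obtains
\[
d(\bar{\xi},s)=\lim_{n\to\infty}\frac{h_{n+1}(\bar{\xi},s)}{h_{n}(\theta\bar{\xi},s)}=\lim_{n\to\infty}\frac{h_{n}(\theta\bar{\xi},h_{\xi_{0}}(s))}{h_{n}(\theta\bar{\xi},s)},
\]
so assumption (A2) says precisely that this last ratio tends to $0$.

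Now I would apply Lemma~\ref{open} to the environment $\theta\bar{\xi}$, taking the (by hypothesis) irregular point $s$ itself as the reference point. Since $[h_{\xi_{0}}(s),s]$ is a connected set consisting of $\theta\bar{\xi}$-irregular points, and the $\theta\bar{\xi}$-irregular set is open, this interval lies inside the single connected component containing $s$, i.e.\ in the maximal irregular interval $I(\theta\bar{\xi},s)=(s_{1},s_{2})$; hence $s_{1}<h_{\xi_{0}}(s)\le s<s_{2}$. Lemma~\ref{open} then gives that $l(\theta\bar{\xi},t):=\lim_{n}h_{n}(\theta\bar{\xi},t)/h_{n}(\theta\bar{\xi},s)$ is continuous and strictly increasing on $[s_{1},s_{2}]$ with $l(\theta\bar{\xi},s_{1})=0$; combined with the display above this yields $d(\bar{\xi},s)=l(\theta\bar{\xi},h_{\xi_{0}}(s))>l(\theta\bar{\xi},s_{1})=0$, contradicting (A2). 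Therefore $[h_{\xi_{0}}(s),s]$ must contain a $\theta\bar{\xi}$-regular point $s_{r}$.

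I do not anticipate a genuine obstacle: the substance is entirely contained in Lemma~\ref{open}, imported from \cite{SB77}. The only items needing a word of care are the index bookkeeping behind $h_{n+1}(\bar{\xi},\cdot)=h_{n}(\theta\bar{\xi},h_{\xi_{0}}(\cdot))$, the elementary topological remark that a connected subset of the open irregular set lies inside one of its components, and the degenerate endpoint cases $s_{1}=0$ or $s_{2}=\infty$, where the same strict monotonicity of $l$ still applies because $h_{\xi_{0}}(s)>0$ and $s<\infty$.
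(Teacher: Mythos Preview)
Your proof is correct. Both your argument and the paper's rest on the same key identity $d(\bar{\xi},s)=\lim_{n}h_{n}(\theta\bar{\xi},h_{\xi_{0}}(s))/h_{n}(\theta\bar{\xi},s)=0$, but the paper finishes differently: rather than invoking Lemma~\ref{open} and arguing by contradiction, it constructs the regular point directly as
\[
s_{r}:=\sup\Bigl\{t\in[h_{\xi_{0}}(s),s]:\lim_{n}\tfrac{h_{n}(\theta\bar{\xi},t)}{h_{n}(\theta\bar{\xi},s)}=0\Bigr\},
\]
which is well-defined because $h_{\xi_{0}}(s)$ belongs to the set, and then checks via a short two-case analysis (using only Theorem~\ref{re1}) that $s_{r}$ is $\theta\bar{\xi}$-regular. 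Your route is slightly more structural, outsourcing the work to Lemma~\ref{open}; the paper's is more self-contained, needing only the ratio characterization of regularity. Either way the content is the same, and your handling of the endpoint cases is fine since you only need $l(\theta\bar{\xi},\cdot)>0$ at the interior point $h_{\xi_{0}}(s)$.
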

\begin{proof}
Define
\beqnn
s_{r}:=\sup\left\{t \Big|~h_{\xi_{0}}(s)\leqslant t\leqslant s ~\text{and} ~\lim_{n}\frac{h_{n}(\theta\bar{\xi},t)}{h_{n}(\theta\bar{\xi},s)}=0\right\},
\eeqnn
$d(\bar{\xi},s)=0$ tells us
\beqnn
d(\bar{\xi},s)=\displaystyle\lim_{n}\frac{h_{n+1}(\bar{\xi},s)}{h_{n}(\theta\bar{\xi},s)}=
\lim_{n}\frac{h_{n}\left(\theta\bar{\xi},h_{\xi_{0}}(s)\right)}{h_{n}(\theta\bar{\xi},s)}=0.
\eeqnn
 Then $s_{r}$ exists and $h_{\xi_{0}}(s)\leqslant s_{r}\leqslant s.$

If $\displaystyle\lim_{n\to\infty}\frac{h_{n}(\theta\bar{\xi},s_{r})}{h_{n}(\theta\bar{\xi},s)}=0$, then
$\displaystyle\lim_{n\to\infty}\frac{h_{n}(\theta\bar{\xi},s_{r})}{h_{n}(\theta\bar{\xi},t)}=0$ for all $t>s_{r}$, since $\displaystyle\lim_{n\to\infty}\frac{h_{n}(\theta\bar{\xi},t)}{h_{n}(\theta\bar{\xi},s)}>0$; and if $\displaystyle\lim_{n\to\infty}\frac{h_{n}(\theta\bar{\xi},s_{r})}{h_{n}(\theta\bar{\xi},s)}>0$, then
$\displaystyle\lim_{n\to\infty}\frac{h_{n}(\theta\bar{\xi},t)}{h_{n}(\theta\bar{\xi},s_{r})}=0$ for all $t<s_{r}$, since $\displaystyle\lim_{n\to\infty}\frac{h_{n}(\theta\bar{\xi},t)}{h_{n}(\theta\bar{\xi},s)}=0$.
In both cases $s_{r}$ is $\theta\bar{\xi}$-regular by Theorem $\ref{re1}$.
\qed
\end{proof}

\begin{definition}\label{rp}
The branching process $\{Z_{n}(\bar{\xi})\}$ is called $\bar{\xi}$-regular if all $0<s<\infty$ are $\bar{\xi}$-regular and $\bar{\xi}$-irregular otherwise.
\end{definition}

We have the following 0-1 law.

\begin{theorem}
Let $A=\{\bar{\xi}:Z_{n}(\bar{\xi})~\text{is}~\bar{\xi}\text{-regular}\}$, then $\eta(A)=0~\text{or}~1$.
\end{theorem}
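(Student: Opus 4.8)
The plan is to show that the event $A=\{\bar\xi:Z_n(\bar\xi)\text{ is }\bar\xi\text{-regular}\}$ is invariant (up to $\eta$-null sets) under the ergodic shift $\theta$, and then to invoke ergodicity of $\theta$ to conclude $\eta(A)\in\{0,1\}$. The key tool is Remark~\ref{rm00}: if $s\in(0,\infty)$ is $\bar\xi$-regular then $h_{\xi_0}(s)$ is $\theta\bar\xi$-regular, and the same with ``regular'' replaced by ``irregular''; equivalently, using the identity $h_n(\bar\xi,t)=h_{n-k}(\theta^k\bar\xi,h_k(\bar\xi,t))$, regularity of a point is transported along the shift via the maps $h_{\xi_i}$.

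First I would establish $\theta A\subseteq A$ up to null sets. Suppose $\bar\xi\in A$, i.e.\ every $s\in(0,\infty)$ is $\bar\xi$-regular. I want to show $\theta\bar\xi\in A$, i.e.\ every $t\in(0,\infty)$ is $\theta\bar\xi$-regular. Given $t\in(0,\infty)$, note that $h_{\xi_0}:(0,\infty)\to(0,\infty)$ is a continuous strictly increasing bijection onto its range; since $f_{\xi_0}$ satisfies (A1) (so $f_{\xi_0}(0)=0$), the range of $h_{\xi_0}$ is an interval of the form $(0,h_{\xi_0}(\infty))$ (possibly all of $(0,\infty)$). So either $t$ lies in the range of $h_{\xi_0}$, say $t=h_{\xi_0}(s)$ with $s\in(0,\infty)$, in which case $s$ is $\bar\xi$-regular by hypothesis and hence $t=h_{\xi_0}(s)$ is $\theta\bar\xi$-regular by Remark~\ref{rm00}; or $t$ exceeds every value $h_{\xi_0}(s)$, $s\in(0,\infty)$. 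In the latter case I would argue directly from Theorem~\ref{re1}: pick any $s_0\in(0,\infty)$, so $h_{\xi_0}(s_0)<t$; since $\bar\xi$ is regular, every point is regular, and using the characterization $\lim_n h_n(\bar\xi,r)/h_n(\bar\xi,s_0)=0$ for $r<s_0$, together with $h_n(\bar\xi,\cdot)=h_{n-1}(\theta\bar\xi,h_{\xi_0}(\cdot))$, one transports the relevant vanishing ratios to $\theta\bar\xi$ and checks the regularity criterion of Theorem~\ref{re1} for $t$ and all $t'<t$. Actually it is cleaner to handle both cases uniformly: for $\eta$-a.e.\ $\bar\xi$ and every $t\in(0,\infty)$, Lemma~\ref{er} (applied with a suitable $s$) or a direct argument shows $t$ is $\theta\bar\xi$-regular whenever all points are $\bar\xi$-regular, because irregularity of any point of $\theta\bar\xi$ would, by pulling back through $h_{\xi_0}$ (and using that the irregular set is open, Lemma~\ref{open}), force an irregular point for $\bar\xi$.

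Next I would establish the reverse inclusion $A\subseteq\theta A$ up to null sets, i.e.\ if $\theta\bar\xi\in A$ then $\bar\xi\in A$. Suppose every $t\in(0,\infty)$ is $\theta\bar\xi$-regular; I must show every $s\in(0,\infty)$ is $\bar\xi$-regular. Fix $s$. Then $h_{\xi_0}(s)\in(0,\infty)$ is $\theta\bar\xi$-regular by hypothesis, and I claim this forces $s$ to be $\bar\xi$-regular. Indeed, for $0<t<s$ we have $h_{\xi_0}(t)<h_{\xi_0}(s)$ (strict monotonicity of $h_{\xi_0}$), so by Theorem~\ref{re1} applied to $\theta\bar\xi$ at the regular point $h_{\xi_0}(s)$, $\lim_n h_n(\theta\bar\xi,h_{\xi_0}(t))/h_n(\theta\bar\xi,h_{\xi_0}(s))=0$; but $h_n(\theta\bar\xi,h_{\xi_0}(r))=h_{n+1}(\bar\xi,r)$, so $\lim_n h_{n+1}(\bar\xi,t)/h_{n+1}(\bar\xi,s)=0$, which is exactly the criterion of Theorem~\ref{re1} for $s$ to be $\bar\xi$-regular. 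Hence $\bar\xi\in A$. Combining the two inclusions gives $\theta^{-1}A=A$ up to $\eta$-null sets, and since $\bar\xi$ is i.i.d.\ the shift $\theta$ is ergodic, so $\eta(A)=0$ or $1$.

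\medskip
I expect the main obstacle to be the ``overflow'' case in the first inclusion: a point $t\in(0,\infty)$ of $\theta\bar\xi$ that is not of the form $h_{\xi_0}(s)$ for any finite $s$, which occurs precisely when $h_{\xi_0}(\infty)=\sup_s h_{\xi_0}(s)<\infty$, i.e.\ when $f_{\xi_0}^{(-1)}(0^+)>0$. Here Remark~\ref{rm00} does not apply verbatim and one must argue regularity of $t$ directly from Theorem~\ref{re1}, checking that for all $t'\in(0,t)$ one still has $\lim_n h_n(\theta\bar\xi,t')/h_n(\theta\bar\xi,t)=0$; the cleanest route is to compare with some $h_{\xi_0}(s_0)<t$, use regularity of $s_0$ for $\bar\xi$, and push through $h_{\xi_0}$, handling the two sub-ranges $t'\le h_{\xi_0}(s_0)$ and $h_{\xi_0}(s_0)<t'<t$ separately (the latter again by pull-back, the former by the previous case and a limit comparison). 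Everything else is a routine translation of the constant-environment arguments of Schuh and Barbour ($\cite{SB77}$) using the cocycle identity $h_n(\bar\xi,\cdot)=h_{n-k}(\theta^k\bar\xi,h_k(\bar\xi,\cdot))$.
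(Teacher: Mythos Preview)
Your proof is correct and follows the same strategy as the paper: show $\theta A=A$ via Theorem~\ref{re1} and the cocycle identity $h_n(\bar\xi,\cdot)=h_{n-1}(\theta\bar\xi,h_{\xi_0}(\cdot))$, then invoke ergodicity of the shift. The ``overflow'' case you worry about never occurs: under (A1) we have $f_{\xi_0}(0)=0$, hence $f_{\xi_0}^{-1}(0)=0$ and $h_{\xi_0}$ is a bijection of $(0,\infty)$ onto itself; the paper makes this transparent by working with the inverse map $k_{\xi_0}$ (writing $h_n(\theta\bar\xi,t)=h_{n+1}(\bar\xi,k_{\xi_0}(t))$), so every $t\in(0,\infty)$ is of the form $h_{\xi_0}(s)$ and your first inclusion reduces to a one-line application of Remark~\ref{rm00}.
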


\begin{proof}
From Definition \ref{rp} and Theorem $\ref{re1}$ we know that for any $\bar{\xi}\in A$, $0<s<\infty$, if $0<t<s$, then
\beqnn
\lim_{n\to\infty}\frac{h_{n}(\bar{\xi},t)}{h_{n}(\bar{\xi},s)}=0.
\eeqnn
On the other hand, for any $\bar{\xi}\in A$, if $\{Z_{n}(\theta\bar{\xi})\}$ is $\theta\bar{\xi}$-irregular, then there exists $s,~0<t<s,$ such that $\displaystyle\lim_{n\to\infty}\frac{h_{n}(\theta\bar{\xi},t)}{h_{n}(\theta\bar{\xi},s)}>0,i.e.,$
\begin{eqnarray}\label{y0}
\lim_{n\to\infty}\frac{h_{\xi_{n}}\left(\cdots h_{\xi_{1}}(h_{\xi_{0}}(k_{\xi_{0}}(t)))\cdots\right)}{h_{\xi_{n}}\left(\cdots h_{\xi_{1}}(h_{\xi_{0}}(k_{\xi_{0}}(s)))\cdots\right)}>0.
\end{eqnarray}
Combining with the monotonicity of $k_{\xi_{0}}$, we have $k_{\xi_{0}}(t)<k_{\xi_{0}}(s)$.
$(\ref{y0})$ means $k_{\xi_{0}}(s)$ is a $\bar{\xi}$-irregular point, as a consequence $\{Z_{n}(\bar{\xi})\}$ is $\bar{\xi}$-irregular, which contradicts to the fact that $\bar{\xi}\in A$. Thus, for any $\bar{\xi}\in A,~\theta\bar{\xi}\in A$. In a similar way we see that for any $\theta\bar{\xi}\in A,~\bar{\xi}\in A$. So $
\theta A=A,
$
i.e., A is a $\theta$-invariant set. Since $\theta$ is ergodic, we infer that $
\eta(A)=0~~\text{or}~~1.$
\qed
\end{proof}

\noindent Thus we can make the following definition classifying the processes.

\begin{definition}\label{rpr}
The branching process in random environment $\{Z_{n}\}$ is called regular branching process if $\eta\left(\{\bar{\xi}:Z_{n}(\bar{\xi})~\text{is}~ \bar{\xi}\text{-regular}\}\right)=1$, otherwise irregular.
\end{definition}

The following results can also be proved  similar as that of Theorem 1.1.7 in $\cite{SB77}$, we omit the details.

\begin{theorem}\label{ca1}
\noindent $\left( \text{1} \right)$ Let $c_{n}(\bar{\xi})$ be a sequence of positive constants, such that $Z_{n}(\bar{\xi})/c_{n}(\bar{\xi})$ converges in distribution, and let $F_{\bar{\xi}}$ denote the distribution function of the limit. Then there are four cases:
\begin{alignat*}{2}
(a)&~F_{\bar{\xi}}(0)=1 \quad\quad\quad&~~~  \Longrightarrow \quad\quad\quad\quad   &\lim_{n} h_{n}(\bar{\xi},s)c_{n}(\bar{\xi})=\infty~~\text{for all }~~ 0<s<\infty;\\
(b)&~F_{\bar{\xi}}(0)=F_{\bar{\xi}}(\infty)=0 \quad\quad\quad&~~~ \Longrightarrow \quad\quad\quad\quad   &\lim_{n} h_{n}(\bar{\xi},s)c_{n}(\bar{\xi})=0~~\text{for all }~~ 0<s<\infty;\\
(c)&~1>F_{\bar{\xi}}(0)=F_{\bar{\xi}}(\infty)>0 \quad\quad\quad&~~~ \Longrightarrow \quad\quad\quad\quad   &\text{a $\bar{\xi}$-regular point $s_{r}$ exists such that}\\
&&&\lim_{n}h_{n}(\bar{\xi},t)c_{n}(\bar{\xi})=
\begin{cases}
0 & \text{if~  $0<t<s_{r}$}\\
\infty & \text{if~  $s_{r}<t<\infty$};
\end{cases}\\
(d)&~F_{\bar{\xi}}(0)<F_{\bar{\xi}}(\infty) \quad\quad\quad&~~~ \Longrightarrow \quad\quad\quad\quad   &\text{a $\bar{\xi}$-irregular point $s_{i}$ exists such that}\\
&&&\lim_{n}h_{n}(\bar{\xi},s_{i})c_{n}(\bar{\xi})=1.
\end{alignat*}

\noindent $\left( \text{2} \right)$ On the other hand, in (1) if one of the conditions on the right-hand side is satisfied, then $Z_{n}(\bar{\xi})/c_{n}(\bar{\xi})$ converges almost surely to a (possibly infinite-valued) random variable $W(\bar{\xi})$; more specifically there are four cases:
\begin{alignat*}{2}
(a)&~\lim_{n} h_{n}(\bar{\xi},s)c_{n}(\bar{\xi})=\infty~~~\text{for all }~~ 0<s<\infty& \Longrightarrow \quad  &\lim_{n}\frac{Z_{n}}{c_{n}}=0~~P_{\bar{\xi}}\text{-a.s.};\\
(b)&~\lim_{n} h_{n}(\bar{\xi},s)c_{n}(\bar{\xi})=0~~~\text{for all }~~ 0<s<\infty \quad\quad&~~ \Longrightarrow \quad  &\lim_{n}\frac{Z_{n}}{c_{n}}=\lim_{n}Z_{n}(\bar{\xi})~~P_{\bar{\xi}}\text{-a.s.};\\
(c)&~\text{a $\bar{\xi}$-regular point $s_{r}$ exists such that}\\
&~\lim_{n}h_{n}(\bar{\xi},t)c_{n}(\bar{\xi})=
\begin{cases}
0 & \text{if~  $0<t<s_{r}$}\\
\infty & \text{if~  $s_{r}<t<\infty$}
\end{cases} \quad\quad&~~ \Longrightarrow \quad  &\lim_{n}\frac{Z_{n}}{c_{n}}=W(\bar{\xi},s_{r})~~P_{\bar{\xi}}\text{-a.s.};
\\
(d)&~\text{a $\bar{\xi}$-irregular point $s_{i}$ exists such that}\\
&~\lim_{n}h_{n}(\bar{\xi},s_{i})c_{n}(\bar{\xi})=1\quad\quad&~~ \Longrightarrow \quad  &\lim_{n}\frac{Z_{n}}{c_{n}}=W(\bar{\xi},s_{i})~~P_{\bar{\xi}}\text{-a.s.}.
\end{alignat*}
\end{theorem}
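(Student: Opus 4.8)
The plan is to deduce both parts from a single exact relation between $c_n(\bar\xi)$ and the norming sequences $h_n(\bar\xi,\cdot)$. Since $h_n(\bar\xi,\cdot)$ is the functional inverse of $k_n(\bar\xi,\cdot)$ and $E_{\bar\xi}(e^{-tZ_n})=e^{-k_n(\bar\xi,t)}$, for every $n$ and $s>0$ one has $E_{\bar\xi}(e^{-h_n(\bar\xi,s)c_n(\bar\xi)\,Z_n/c_n(\bar\xi)})=E_{\bar\xi}(e^{-h_n(\bar\xi,s)Z_n})=e^{-k_n(\bar\xi,h_n(\bar\xi,s))}=e^{-s}$. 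Writing $a_n(s):=h_n(\bar\xi,s)c_n(\bar\xi)$, this is the identity $E_{\bar\xi}(e^{-a_n(s)Z_n/c_n(\bar\xi)})=e^{-s}$, valid for all $n$. Complementarily, Theorem \ref{t1} gives $W_n(\bar\xi,s):=Z_n(\bar\xi)h_n(\bar\xi,s)\to W(\bar\xi,s)$ $P_{\bar\xi}$-a.s., so $Z_n/c_n(\bar\xi)=W_n(\bar\xi,s)/a_n(s)$ for every $s$. Part $(1)$ is read off from the identity by inspecting the subsequential limits of $a_n(s)$, and part $(2)$ from the decomposition $Z_n/c_n=W_n/a_n$.

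For part $(1)$, set $\phi(u):=\int_{[0,\infty)}e^{-ux}\,dF_{\bar\xi}(x)$; it is continuous and non-increasing on $(0,\infty)$ with $\phi(0+)=F_{\bar\xi}(\infty)$ and $\phi(\infty)=F_{\bar\xi}(0)$. I first record the basic dichotomy: if $a\in[0,\infty]$ is a subsequential limit of $a_n(s)$, then letting $n$ run along that subsequence in the identity — using $Z_n/c_n(\bar\xi)\Rightarrow F_{\bar\xi}$, the continuous extension of $u\mapsto e^{-ux}$ to $[0,\infty]$, and the estimate $|e^{-bx}-e^{-ax}|\le|a-b|/(e\min\{a,b\})$ to absorb the moving exponent — forces $\phi(a)=e^{-s}$ if $a\in(0,\infty)$, $e^{-s}\ge F_{\bar\xi}(\infty)$ if $a=0$, and $e^{-s}\le F_{\bar\xi}(0)$ if $a=\infty$. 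Now split according to $F_{\bar\xi}(0),F_{\bar\xi}(\infty)$: in case $(a)$, $F_{\bar\xi}(0)=F_{\bar\xi}(\infty)=1$, so neither $a=0$ nor $a\in(0,\infty)$ is admissible for any $s>0$, whence $a_n(s)\to\infty$; case $(b)$ is symmetric; in case $(c)$, $\phi\equiv F_{\bar\xi}(0)$ on $(0,\infty)$, so $a\in(0,\infty)$ is admissible only at $s=s_r:=-\log F_{\bar\xi}(0)$, while for $t<s_r$ only $a=0$ and for $t>s_r$ only $a=\infty$ remain; in case $(d)$, $\phi$ is a continuous strictly decreasing bijection of $(0,\infty)$ onto the now nonempty open interval $(F_{\bar\xi}(0),F_{\bar\xi}(\infty))$, so for $s$ with $e^{-s}$ in this interval only $a\in(0,\infty)$ remains, giving $a_n(s)\to\phi^{-1}(e^{-s})$, and as $s\mapsto\phi^{-1}(e^{-s})$ rises continuously from $0$ to $\infty$ across this range there is a unique $s_i$ with $a_n(s_i)\to1$. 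It remains to identify $s_r$ as $\bar\xi$-regular and $s_i$ as $\bar\xi$-irregular. For $s_i$: were it regular, Theorem \ref{re1} would give $h_n(\bar\xi,t)/h_n(\bar\xi,s_i)\to0$ for some $t<s_i$ still in the interval, contradicting $a_n(t)/a_n(s_i)\to\phi^{-1}(e^{-t})>0$. For $s_r$: were it irregular, by Lemma \ref{open} it would lie inside a maximal irregular interval $(s_1,s_2)$, where $l(\bar\xi,\cdot)=\lim_n h_n(\bar\xi,\cdot)/h_n(\bar\xi,s_r)$ is continuous and strictly increasing from $0$ to $\infty$; then from $a_n(s_r)=a_n(s)/(a_n(s)/a_n(s_r))$ together with $a_n(s)\to0$ for $s\in(s_1,s_r)$ and $a_n(s)\to\infty$ for $s\in(s_r,s_2)$ one would get $a_n(s_r)\to0$ and $a_n(s_r)\to\infty$ simultaneously, a contradiction.

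For part $(2)$ I feed each right-hand hypothesis into $Z_n/c_n(\bar\xi)=W_n(\bar\xi,s)/a_n(s)$. In $(a)$, $a_n(s)\to\infty$ for every $s$, so $Z_n/c_n(\bar\xi)\to0$ on $\{W(\bar\xi,s)<\infty\}$; since $E_{\bar\xi}(e^{-W(\bar\xi,s)})=e^{-s}$ forces $P_{\bar\xi}(W(\bar\xi,s)<\infty)\ge e^{-s}\uparrow1$ as $s\downarrow0$, we conclude $Z_n/c_n(\bar\xi)\to0$ $P_{\bar\xi}$-a.s. In $(b)$, $a_n(s)\to0$ for every $s$; here one first notes that (A2) forces $\eta(\xi_0^{(1)}<1)>0$, hence $\prod_n\xi_n^{(1)}=0$ by Borel--Cantelli, so $\lim_nZ_n=\infty$ $P_{\bar\xi}$-a.s. and the assertion reads $\lim_nZ_n/c_n(\bar\xi)=\infty$; this holds because on $\{W(\bar\xi,s)>0\}$ one has $Z_n/c_n(\bar\xi)=W_n(\bar\xi,s)/a_n(s)\to\infty$, and $\bigcup_{s>0}\{W(\bar\xi,s)>0\}$ has full $P_{\bar\xi}$-probability, since $E_{\bar\xi}X(\bar\xi,\infty)=\lim_{s\to\infty}e^{-s}=0$ forces $W(\bar\xi,\infty)=\infty$ $P_{\bar\xi}$-a.s. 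In $(d)$, directly $Z_n/c_n(\bar\xi)=W_n(\bar\xi,s_i)/a_n(s_i)\to W(\bar\xi,s_i)$. Case $(c)$ is the substantial one: since $s_r$ is $\bar\xi$-regular, $W(\bar\xi,s_r)\in\{0,\infty\}$ $P_{\bar\xi}$-a.s.; from $E_{\bar\xi}X(\bar\xi,s)=e^{-s}$, the monotonicity of $s\mapsto X(\bar\xi,s)$, and monotone convergence, $X(\bar\xi,s_r-)=X(\bar\xi,s_r)=X(\bar\xi,s_r+)$ $P_{\bar\xi}$-a.s., so, using regularity, $\{W(\bar\xi,s_r)=\infty\}=\bigcup_{t<s_r}\{W(\bar\xi,t)>0\}$ and $\{W(\bar\xi,s_r)=0\}=\bigcup_{t>s_r}\{W(\bar\xi,t)<\infty\}$ up to $P_{\bar\xi}$-null sets; since $a_n(t)\to0$ for $t<s_r$ and $a_n(t)\to\infty$ for $t>s_r$, the decomposition yields $Z_n/c_n(\bar\xi)\to\infty$ on the first event and $\to0$ on the second, i.e.\ $Z_n/c_n(\bar\xi)\to W(\bar\xi,s_r)$ $P_{\bar\xi}$-a.s.

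I expect the routine analytic points — interchanging limits in Laplace transforms with a moving exponent, and the bookkeeping for a possibly defective limit law — to be harmless. The real work, where the argument essentially follows that of Theorem 1.1.7 in \cite{SB77}, lies in case $(c)$: in part $(1)$, upgrading the thresholding of the identity to genuine $\bar\xi$-regularity of $s_r$ (this is where Lemma \ref{open} is indispensable); and in part $(2)$, pinning down the a.s.\ limit of $Z_n/c_n(\bar\xi)$ separately on $\{W(\bar\xi,s_r)=0\}$ and $\{W(\bar\xi,s_r)=\infty\}$ — a single ratio $h_n(\bar\xi,t)/h_n(\bar\xi,s_r)$ does not suffice, and one must sweep $t$ across $s_r$ and exploit the a.s.\ one-sided continuity of $s\mapsto W(\bar\xi,s)$ at $s_r$.
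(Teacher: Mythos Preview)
Your proposal is correct and follows the same approach as the paper, which simply defers to Theorem~1.1.7 in \cite{SB77}; you have carefully filled in exactly those omitted details, via the Laplace-transform identity $E_{\bar\xi}(e^{-a_n(s)Z_n/c_n})=e^{-s}$ for part~(1) and the decomposition $Z_n/c_n=W_n(\bar\xi,s)/a_n(s)$ for part~(2), invoking Theorem~\ref{re1} and Lemma~\ref{open} at the expected places. The only minor point worth tightening is the measurability bookkeeping behind the a.s.\ one-sided continuity $X(\bar\xi,s_r\pm)=X(\bar\xi,s_r)$: one should fix a countable dense set of $s$ on which the a.s.\ limits are taken simultaneously and extend by monotonicity, but this is routine.
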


\qed

\begin{remark} Combining with Theorem \ref{t1},
Theorem $\ref{ca1}$ implies that in our case, for a.e. $\bar{\xi}$, no $c_{n}(\bar{\xi})$ exists that $Z_{n}(\bar{\xi})/c_{n}(\bar{\xi})$ has a proper and non-degenerate limit. Morever, if $\{Z_{n}\}$ is a regular branching process, then  the growth of $Z_{n}$  can not be measured by a sequence of positive constants.
\end{remark}

\subsection{Normalized by a sequence of functions}\label{ms3}

Since for $\eta$-a.e. $\bar{\xi}$, $\lim_{n}Z_{n}(\bar{\xi})/c_{n}(\bar{\xi})$ is never a proper, non-degenerate random variable, we now consider other possibilities for normalizing $Z_{n}(\bar{\xi})$.

For $0\leqslant x<\infty$, let
\beqnn
y_{n}\left(\bar{\xi},x\right)=f_{\xi_{0}}\left(\cdots\left(f_{\xi_{n-1}}\left(e^{-\frac{1}{x}}\right)\right)\cdots\right), ~y_{n}(\bar{\xi},0)=f_{n}(\bar{\xi},0).
\eeqnn

\begin{theorem}\label{yu1}
(1) $y_{n}\left(\bar{\xi},Z_{n}(\bar{\xi})\right)$ converges almost surely to a random variable $Y(\bar{\xi})$. Furthermore, if $s_{r}$ is a $\bar{\xi}$-regular point and $x_{r}=e^{-s_{r}}$, then $P_{\bar{\xi}}\left(Y(\bar{\xi})\leqslant x_{r}\right)=x_{r}$ and $P_{\bar{\xi}}\left(Y(\bar{\xi})= x_{r}\right)=0$.

\noindent(2) $Y\in(0,1)~\eta$-a.s..  In particular, if $\{Z_{n}\}$ is a regular branching process, then $Y$ is uniformly distributed on $(0,1)$.

\end{theorem}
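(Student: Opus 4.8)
The plan is to recognise $y_{n}(\bar\xi,Z_{n}(\bar\xi))$ as the level at which the limiting martingale of Theorem~\ref{t1} crosses the value $1$, and then to read off its law from the distribution of that crossing level.

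\emph{Step 1 (reduction to a root).} From the definition, $y_{n}(\bar\xi,x)=e^{-k_{n}(\bar\xi,1/x)}$; since $k_{n}(\bar\xi,\cdot)$ is continuous and strictly increasing with inverse $h_{n}(\bar\xi,\cdot)$, and since — using {\bf(A1)}, so that $Z_{n}(\bar\xi)\geq 1$ — the map $s\mapsto W_{n}(\bar\xi,s):=Z_{n}(\bar\xi)h_{n}(\bar\xi,s)$ is continuous, strictly increasing and runs from $0$ to $\infty$, the quantity $s_{n}^{\ast}(\bar\xi):=k_{n}\big(\bar\xi,1/Z_{n}(\bar\xi)\big)$ is the unique solution of $W_{n}(\bar\xi,s)=1$, one has $y_{n}(\bar\xi,Z_{n}(\bar\xi))=e^{-s_{n}^{\ast}(\bar\xi)}$, and
\[
\{y_{n}(\bar\xi,Z_{n}(\bar\xi))\leq e^{-s}\}=\{W_{n}(\bar\xi,s)\leq 1\},\qquad \{y_{n}(\bar\xi,Z_{n}(\bar\xi))< e^{-s}\}=\{W_{n}(\bar\xi,s)< 1\}.
\]
So the whole statement is about the location of the root $s_{n}^{\ast}$.

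\emph{Step 2 (almost sure convergence).} By Theorem~\ref{t1}, $W_{n}(\bar\xi,s)\to W(\bar\xi,s)$ $P_{\bar\xi}$-a.s.\ for each $s$, hence simultaneously for all rational $s$ on a $P_{\bar\xi}$-full event, and $W(\bar\xi,\cdot)$ is then non-decreasing. Moreover Theorem~\ref{re1} and Lemma~\ref{open} imply that for any $q_{1}<q_{2}$ the ratio $h_{n}(\bar\xi,q_{1})/h_{n}(\bar\xi,q_{2})$ tends to a limit in $[0,1)$ (it is $0$ as soon as a $\bar\xi$-regular point lies in $(q_{1},q_{2}]$, and equals $l(\bar\xi,q_{1})/l(\bar\xi,q_{2})<1$ when $[q_{1},q_{2}]$ lies inside a maximal irregular interval); consequently $W(\bar\xi,q_{1})=\lim_{n}W_{n}(\bar\xi,q_{1})<W(\bar\xi,q_{2})$ whenever both are finite and positive, so $W(\bar\xi,\cdot)$ is strictly increasing on $\{s:0<W(\bar\xi,s)<\infty\}$ and in particular is never constantly equal to $1$ on an interval. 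Hence $\sup\{q\in\mathbb Q:W(\bar\xi,q)<1\}=\inf\{q\in\mathbb Q:W(\bar\xi,q)>1\}=:T(\bar\xi)$, and comparing $s_{n}^{\ast}$ with rationals on either side gives $\liminf_{n}s_{n}^{\ast}\geq T(\bar\xi)\geq\limsup_{n}s_{n}^{\ast}$. Therefore $s_{n}^{\ast}(\bar\xi)\to T(\bar\xi)$ and $y_{n}(\bar\xi,Z_{n}(\bar\xi))\to Y(\bar\xi):=e^{-T(\bar\xi)}$ $P_{\bar\xi}$-a.s.

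\emph{Step 3 ($Y\in(0,1)$ and the value at regular points).} From $E_{\bar\xi}(e^{-W(\bar\xi,s)})=e^{-s}$ (established inside the proof of Theorem~\ref{t1}) and the bounds $e^{-W}\mathbf 1_{\{W\geq 1\}}\leq e^{-1}$, $e^{-W}\mathbf 1_{\{W<1\}}\geq e^{-1}\mathbf 1_{\{W<1\}}$ one gets, uniformly in $\bar\xi$, $P_{\bar\xi}(W(\bar\xi,s)<1)\leq e^{1-s}$ and $P_{\bar\xi}(W(\bar\xi,s)\geq 1)\leq(1-e^{-s})/(1-e^{-1})$; letting $s\to\infty$ (and using that $\{W(\bar\xi,s)\geq 1\}$ increases in $s$) gives $P_{\bar\xi}(T(\bar\xi)<\infty)=1$, and letting $s\to 0$ gives $P_{\bar\xi}(T(\bar\xi)>0)=1$, so $Y\in(0,1)$ $\eta$-a.s. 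If $s_{r}$ is $\bar\xi$-regular and $x_{r}=e^{-s_{r}}$, then $W(\bar\xi,s_{r})\in\{0,\infty\}$ $P_{\bar\xi}$-a.s., so $E_{\bar\xi}(e^{-W(\bar\xi,s_{r})})=e^{-s_{r}}$ forces $P_{\bar\xi}(W(\bar\xi,s_{r})=0)=x_{r}$ and $P_{\bar\xi}(W(\bar\xi,s_{r})=\infty)=1-x_{r}$. Using Theorem~\ref{re1} ($h_{n}(\bar\xi,t)/h_{n}(\bar\xi,s_{r})\to 0$ for $t<s_{r}$, $\to\infty$ for $t>s_{r}$): on $\{W(\bar\xi,s_{r})=0\}$ one has $W(\bar\xi,t)=0$ for every $t<s_{r}$, hence $T(\bar\xi)\geq s_{r}$; on $\{W(\bar\xi,s_{r})=\infty\}$ one has $W(\bar\xi,t)=\infty$ for every $t>s_{r}$, hence $T(\bar\xi)\leq s_{r}$. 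Thus $\{W(\bar\xi,s_{r})=0\}\subseteq\{Y\leq x_{r}\}$ and $\{W(\bar\xi,s_{r})=\infty\}\subseteq\{Y\geq x_{r}\}$; since these two events exhaust the sample space up to a $P_{\bar\xi}$-null set, $P_{\bar\xi}(Y\leq x_{r})=x_{r}$ will follow once $P_{\bar\xi}(Y=x_{r})=0$ is known, and then the uniform case is immediate: when $\{Z_{n}\}$ is a regular branching process every $s\in(0,\infty)$ is $\bar\xi$-regular for $\eta$-a.e.\ $\bar\xi$, so $P_{\bar\xi}(Y\leq x)=x$ for all $x\in(0,1)$.

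\emph{Step 4 (the main obstacle: no atom at regular points).} It remains to prove $P_{\bar\xi}(T(\bar\xi)=s_{r})=0$ for a fixed $\bar\xi$-regular $s_{r}$. The branching property gives the coupling $W(\bar\xi,s)=\sum_{i=1}^{Z_{1}}W^{(i)}(\theta\bar\xi,h_{\xi_{0}}(s))$ with $W^{(i)}$ i.i.d.\ copies of $W(\theta\bar\xi,h_{\xi_{0}}(s))$, which, together with the fact that $h_{\xi_{0}}(s_{r})$ is $\theta\bar\xi$-regular (Remark~\ref{rm00}), yields the recursion
\[
P_{\bar\xi}\big(T(\bar\xi)=s_{r}\big)=f_{\xi_{0}}\Big(P_{\theta\bar\xi}\big(T(\theta\bar\xi)\geq h_{\xi_{0}}(s_{r})\big)\Big)-f_{\xi_{0}}\Big(P_{\theta\bar\xi}\big(T(\theta\bar\xi)> h_{\xi_{0}}(s_{r})\big)\Big).
\]
Since $T(\theta\bar\xi)$ has unbounded support — by Theorem~\ref{t1}, $P_{\theta\bar\xi}(W(\theta\bar\xi,s)=0)>0$ forces $P_{\theta\bar\xi}(T(\theta\bar\xi)\geq s)>0$ for every $s$ — both arguments of $f_{\xi_{0}}$ lie in $(0,1]$, where $f_{\xi_{0}}$ is strictly increasing, so the right-hand side vanishes iff $P_{\theta\bar\xi}(T(\theta\bar\xi)=h_{\xi_{0}}(s_{r}))=0$. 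Iterating this along the shift, and invoking the uniform estimate of Step~3 in the form $P_{\theta^{n}\bar\xi}(T(\theta^{n}\bar\xi)\leq h_{n}(\bar\xi,s_{r}))\leq(1-e^{-2h_{n}(\bar\xi,s_{r})})/(1-e^{-1})$ together with $h_{n}(\bar\xi,s_{r})\downarrow 0$, forces $P_{\bar\xi}(T(\bar\xi)=s_{r})=0$, i.e.\ $P_{\bar\xi}(Y=x_{r})=0$. This last step is the delicate one — it is where the self-similar structure (the same one producing the Poincar\'e-type functional equation \Ref{xidi} of Theorem~\ref{t2}(3)) is genuinely used — and is carried out along the lines of Schuh and Barbour (\cite{SB77}).
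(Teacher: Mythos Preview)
Your Steps 1--2 are correct and coincide with the paper's route (both identify $y_n(\bar\xi,Z_n)=e^{-s_n^{\ast}}$ with $s_n^{\ast}$ the root of $W_n(\bar\xi,\cdot)=1$ and sandwich via $\{W(\bar\xi,s)\lessgtr 1\}$). Your proof that $Y\in(0,1)$ in Step~3, via the elementary bounds $P_{\bar\xi}(W(\bar\xi,s)<1)\le e^{1-s}$ and $P_{\bar\xi}(W(\bar\xi,s)\ge1)\le(1-e^{-s})/(1-e^{-1})$ coming straight from $E_{\bar\xi}[e^{-W(\bar\xi,s)}]=e^{-s}$, is genuinely different from --- and more elementary than --- the paper's, which argues by contradiction using Lemma~\ref{er} (hence {\bf(A2)}) to manufacture $\bar\xi$-regular points arbitrarily close to $0$ and to $\infty$.

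Step~4, however, has a real gap. First, the recursion
\[
P_{\bar\xi}(T=s_r)=f_{\xi_0}\big(P_{\theta\bar\xi}(T\ge h_{\xi_0}(s_r))\big)-f_{\xi_0}\big(P_{\theta\bar\xi}(T>h_{\xi_0}(s_r))\big)
\]
is not justified: the branching decomposition does give $\{W(\bar\xi,s_r)=0\}=\bigcap_i\{W^{(i)}(\theta\bar\xi,h_{\xi_0}(s_r))=0\}$, but $\{T(\bar\xi)\ge s_r\}$ need not equal $\{W(\bar\xi,s_r)=0\}$ --- that equality is essentially the no-atom statement you are trying to prove --- so the passage from $W$ to $T$ is circular. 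Second, even granting the recursion, it only yields the equivalence ``$P_{\bar\xi}(T=s_r)>0\Leftrightarrow P_{\theta^n\bar\xi}(T=h_n(\bar\xi,s_r))>0$ for all $n$''; your Step-3 estimate shows the right-hand probability tends to $0$, but a positive sequence may tend to $0$, so no contradiction follows. A quantitative bound $P_{\bar\xi}(T=s_r)\le C_n\,P_{\theta^n\bar\xi}(T=h_n(\bar\xi,s_r))$ would be needed, and the mean-value theorem produces $C_n=\prod_{k<n}f'_{\xi_k}(p_+^{(k)})$ with $p_+^{(k)}=P_{\theta^k\bar\xi}(T\ge h_k(\bar\xi,s_r))\uparrow 1$; since $f'_{\xi_k}(1-)=m(\xi_k)$ is typically infinite here, $C_n$ is uncontrolled. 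The paper's argument bypasses all of this: from \Ref{y1}--\Ref{y2} and the definition of regularity it reads off the two \emph{strict} inequalities $P_{\bar\xi}(Y<x_r)\ge P_{\bar\xi}(W(\bar\xi,s_r)=0)=x_r$ and $P_{\bar\xi}(Y>x_r)\ge P_{\bar\xi}(W(\bar\xi,s_r)=\infty)=1-x_r$, after which $P_{\bar\xi}(Y=x_r)=0$ and $P_{\bar\xi}(Y\le x_r)=x_r$ are immediate. No recursion along the shift is needed for this part.
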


\begin{remark}
The proof of the first part is similar to that of Theorem 2.1.1 in $\cite{SB77}$. But to conclude that $Y\in(0,1)~\eta$-a.s. can not followed there, because
if $s\in(0,\infty)$ is $\bar{\xi}$-regular (irregular), we only know that $h_{k}(\bar{\xi},s)$ is  $\theta^{k}\bar{\xi}$-regular (irregular) from Remark \ref{rm00}.

\end{remark}

\begin{proof} (1) The proof of the first part is similar to the discussion of Theorem 2.1.1 in $\cite{SB77}$, we rewrite it briefly as follows.
For any $x\in(0,1)$,
\begin{eqnarray}\label{y1}
\begin{split}
\left\{y_{n}(\bar{\xi},Z_{n}(\bar{\xi}))<x~~\text{eventually}\right\}&=\left\{Z_{n}(\bar{\xi})<\left(-\log f_{n}^{(-1)}(\bar{\xi},x)\right)^{-1}~~\text{eventually}\right\}\\
& \supseteq \left\{W(\bar{\xi},-\log x)<1\right\},
\end{split}
\end{eqnarray}
and
\begin{eqnarray}\label{y2}
\begin{split}
\left\{y_{n}(\bar{\xi},Z_{n}(\bar{\xi}))>x~~\text{eventually}\right\}&=\left\{Z_{n}(\bar{\xi})>\left(-\log f_{n}^{(-1)}(\bar{\xi},x)\right)^{-1}~~\text{eventually}\right\}\\
& \supseteq \left\{W(\bar{\xi},-\log x)>1\right\}.
\end{split}
\end{eqnarray}

From $(\ref{y1})~,(\ref{y2})$, similar to the discussion of Theorem 2.1.1 in $\cite{SB77}$, we have
$y_{n}(\bar{\xi},Z_{n}(\bar{\xi}))$ converges almost surely to a random variable $Y(\bar{\xi})$.

If $s_{r}=-\log x_{r}$ is a $\bar{\xi}$-regular point, from the property of regular points and $(\ref{y1})~,(\ref{y2})$ we have
\beqnn
P_{\bar{\xi}}\left(Y(\bar{\xi})<e^{-s_{r}}\right)\geqslant P_{\bar{\xi}}\left(W(\bar{\xi},s_{r})<1\right)&=&P_{\bar{\xi}}\left(W(\bar{\xi},s_{r})=0\right)
=E_{\bar{\xi}}\left(X(\bar{\xi},s_{r})\right)=e^{-s_{r}},\\
P_{\bar{\xi}}\left(Y(\bar{\xi})>e^{-s_{r}}\right)\geqslant P_{\bar{\xi}}\left(W(\bar{\xi},s_{r})>1\right)&=&P_{\bar{\xi}}\left(W(\bar{\xi},s_{r})=\infty\right)
=1-e^{-s_{r}}.
\eeqnn
Thus
\beqnn
P_{\bar{\xi}}\left(Y(\bar{\xi})=x_{r}\right)=0~~~\text{and}~~~P_{\bar{\xi}}\left(Y(\bar{\xi})\leqslant x_{r}\right)=x_{r}.
\eeqnn

(2)We only need to prove $\mathbb{P}(Y=1)=0,~\mathbb{P}(Y=0)=0.$
Let
\beqnn
D=\left\{\bar{\xi}:\text{for any }s\in(0,\infty),~d(\bar{\xi},s)=0\right\},
\eeqnn
from Lemma $\ref{er}$ we know that for any $\bar{\xi}\in D,~s\in(0,\infty)$, there exists at least one $\theta\bar{\xi}$-regular point in $[h_{\xi_{0}}(s),s].$ If we assume that
\beqnn
P_{\bar{\xi}}\left(Y(\bar{\xi})=1\right)=\delta>0,
\eeqnn
we claim that for any $s\in \left(0,-\log (1-\delta)\right)$, $s$ is a $\bar{\xi}$-irregular point. Otherwise, if there exists $s\in \left(0,-\log (1-\delta)\right)$ a $\bar{\xi}$-regular point, then
\beqnn
P_{\bar{\xi}}(Y(\bar{\xi})<e^{-s})=e^{-s}>1-\delta,
\eeqnn
that contradicts to
$P_{\bar{\xi}}(Y(\bar{\xi})=1)=\delta>0.$

Then for any
\beqnn
s\in \left(0,h_{\xi_{0}}(-\log (1-\delta))\right),
\eeqnn
$s$ is a
$\theta\bar{\xi}$-irregular point. But since $\bar{\xi}\in D$, we already know (Lemma $\ref{er}$) that for any
\beqnn
0<s_{0}<h_{\xi_{0}}\left(-\log (1-\delta)\right),
\eeqnn
there exists at least one $\theta\bar{\xi}$-regular point in $[h_{\xi_{0}}(s_{0}),s_{0}]$, since
\beqnn
[h_{\xi_{0}}(s_{0}),s_{0}]\subseteq \left(0,h_{\xi_{0}}\left(-\log (1-\delta)\right)\right),
\eeqnn
thus the assumption is not valid, i.e.,
\beqnn
P_{\bar{\xi}}\left(Y(\bar{\xi})=1\right)=0~~~~~\text{for any }\bar{\xi}\in D.
\eeqnn

In a similar way we can prove that
\beqnn
P_{\bar{\xi}}\left(Y(\bar{\xi})=0\right)=0~~~~~\text{for any }\bar{\xi}\in D.
\eeqnn
Accordingly $\mathbb{P}\left(Y\in(0,1)\right)=1$ because under the assumption (A2), $\mathbb{P}(D)=1$.

In particular, if $\{Z_{n}\}$ is a regular branching process, for $\eta$-a.e. $\bar{\xi}$, any $e^{-s}\in(0,1)$, $s$ is a $\bar{\xi}$-regular point, $P_{\bar{\xi}}\left(Y(\bar{\xi})\leqslant e^{-s}\right)=e^{-s},$ obviously, $Y$ is uniformly distributed on $(0,1).$
\qed
\end{proof}

We define the random variable
\beqnn
T(\bar{\xi})=\sup\left\{s|0<s<\infty~~\text{and}~~~W(\bar{\xi},s)<1\right\}.
\eeqnn
Then for any $\bar{\xi} \in D~\left( D=\left\{\bar{\xi}:\text{for any } s\in(0,\infty),~d(\bar{\xi},s)=0\right\}\right),$
\beqnn
1\geqslant P_{\bar{\xi}}\left(W(\bar{\xi},s)=0\right)\geqslant P_{\bar{\xi}}\left(W\left(\bar{\xi},k_{\xi_{0}}(s)\right)<\infty\right)\geqslant
e^{-k_{\xi_{0}}(s)}\stackrel{s\to 0} {\longrightarrow} 1,
\eeqnn
and
\beqnn
1\geqslant P_{\bar{\xi}}\left(W(\bar{\xi},s)=\infty\right)\geqslant P_{\bar{\xi}}\left(W(\bar{\xi},h_{\xi_{0}}(s))>0\right)\geqslant
1-e^{-h_{\xi_{0}}(s)}\stackrel{s\to \infty} {\longrightarrow} 1.
\eeqnn
This implies that for any $\bar{\xi}\in D$, $P_{\bar{\xi}}$-a.s. $W\left(\bar{\xi},s\right)=0$ if $s$ is close to 0 and
 $W\left(\bar{\xi},s\right)=\infty$ if $s$ is large enough. Therefore by Lemma $\ref{open}$ for $\bar{\xi}\in D$, either
\begin{eqnarray}\label{T1}
T(\bar{\xi}) \text{ is a }\bar{\xi}\text{-regular point with } W\left(\bar{\xi},s\right)=
\begin{cases}
0~~~\text{if   } 0<s<T(\bar{\xi})\\
\infty~~\text{if } T\left(\bar{\xi}\right)<s<\infty,
\end{cases}
\end{eqnarray}
or
\begin{eqnarray}\label{T2}
T\left(\bar{\xi}\right) \text{ is a }\bar{\xi}\text{-irregular point with }W\left(\bar{\xi},T(\bar{\xi})\right)=1~~\text{and } W\left(\bar{\xi},s\right)
\begin{cases}
<1~~\text{for } s<T(\bar{\xi})\\
>1~~\text{for } s>T(\bar{\xi}).
\end{cases}
\end{eqnarray}

\begin{corollary}\label{cor1}
Under Assumption \ref{Ass}, for $\eta$-a.s.$~\bar{\xi}$,
\begin{eqnarray}\label{w1}
W\left(\bar{\xi},s\right)<1<W\left(\bar{\xi},t\right)~~~\text{for   }~0<s<T(\bar{\xi})<t<\infty.
\end{eqnarray}
\begin{eqnarray}
T(\bar{\xi})=-\log Y(\bar{\xi}),
\end{eqnarray}
and therefore $T(\bar{\xi})\in(0,\infty)$, for any $\bar{\xi}$-regular point $s_{r}$,
\begin{eqnarray}
P_{\bar{\xi}}\left(T(\bar{\xi})\geqslant s_{r}\right)=e^{-s_{r}}~~~\text{and}~~~
P_{\bar{\xi}}\left(T(\bar{\xi})= s_{r}\right)=0.
\end{eqnarray}
\end{corollary}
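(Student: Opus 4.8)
\textbf{Proof proposal for Corollary \ref{cor1}.}

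The plan is to read off all three assertions from the dichotomy \eqref{T1}/\eqref{T2} together with Theorem \ref{yu1}. First I would establish \eqref{w1}. By the definition of $T(\bar\xi)$ as a supremum, for any $t>T(\bar\xi)$ we immediately get $W(\bar\xi,t)\ge 1$; the point is to upgrade ``$\ge 1$'' to ``$>1$'' and, symmetrically, to show $W(\bar\xi,s)<1$ strictly for $s<T(\bar\xi)$. In the regular case \eqref{T1} this is immediate since $W$ only takes the values $0$ and $\infty$ there. In the irregular case \eqref{T2}, $T(\bar\xi)$ lies in the open interval of irregularity $I(\bar\xi,T(\bar\xi))=(s_1,s_2)$ from Lemma \ref{open}, and $W(\bar\xi,s)=l(\bar\xi,s)$ is the continuous strictly increasing function with $l(\bar\xi,s_1)=0$, $l(\bar\xi,s_2)=\infty$; strict monotonicity gives $W(\bar\xi,s)<W(\bar\xi,T(\bar\xi))=1<W(\bar\xi,t)$ for $s_1\le s<T(\bar\xi)<t\le s_2$, and for $t$ outside $[s_1,s_2]$ the already-established monotonicity of $s\mapsto W(\bar\xi,s)$ (together with the boundary behaviour $W(\bar\xi,s)=0$ near $0$ and $=\infty$ near $\infty$, noted just above the corollary) finishes it. This uses crucially that $\bar\xi\in D$, which holds $\eta$-a.s. under Assumption \ref{Ass}.

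Next I would prove $T(\bar\xi)=-\log Y(\bar\xi)$. The inclusions \eqref{y1} and \eqref{y2} in the proof of Theorem \ref{yu1} say that $\{W(\bar\xi,-\log x)<1\}\subseteq\{Y(\bar\xi)\le x\}$ and $\{W(\bar\xi,-\log x)>1\}\subseteq\{Y(\bar\xi)\ge x\}$. Writing $x=e^{-s}$ and using \eqref{w1}, we get $\{s<T(\bar\xi)\}\subseteq\{Y(\bar\xi)\le e^{-s}\}$ and $\{s>T(\bar\xi)\}\subseteq\{Y(\bar\xi)\ge e^{-s}\}$ for every fixed $s$. Taking $s$ through a countable dense set and using monotonicity of $s\mapsto e^{-s}$ pins down $-\log Y(\bar\xi)=T(\bar\xi)$ $P_{\bar\xi}$-a.s. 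Since $Y\in(0,1)$ $\eta$-a.s.\ by Theorem \ref{yu1}(2), this gives $T(\bar\xi)\in(0,\infty)$.

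Finally, the distributional statement: for a $\bar\xi$-regular point $s_r$, the event $\{T(\bar\xi)\ge s_r\}$ equals $\{Y(\bar\xi)\le e^{-s_r}\}$ by the previous step, and Theorem \ref{yu1}(1) gives $P_{\bar\xi}(Y(\bar\xi)\le e^{-s_r})=e^{-s_r}$ and $P_{\bar\xi}(Y(\bar\xi)=e^{-s_r})=0$; the latter yields $P_{\bar\xi}(T(\bar\xi)=s_r)=0$, so one need not worry about the distinction between $\ge$ and $>$. The only genuinely delicate point is the strict inequalities in \eqref{w1}: one must handle the regular and irregular alternatives for $T(\bar\xi)$ separately and invoke the continuity/strict monotonicity of $l(\bar\xi,\cdot)$ from Lemma \ref{open} in the irregular case, rather than just the defining supremum property of $T(\bar\xi)$, which by itself only gives the non-strict bound on one side.
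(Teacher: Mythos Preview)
Your proposal is correct and follows essentially the same route as the paper: read off \eqref{w1} from the dichotomy \eqref{T1}/\eqref{T2}, sandwich $Y(\bar\xi)$ using \eqref{y1}--\eqref{y2} to get $T(\bar\xi)=-\log Y(\bar\xi)$, and then quote Theorem~\ref{yu1} for the distributional statements at regular points. One remark: your detour through Lemma~\ref{open} to obtain the \emph{strict} inequalities in \eqref{w1} is unnecessary, since \eqref{T2} (established just before the corollary, precisely via Lemma~\ref{open}) already asserts $W(\bar\xi,s)<1$ for $s<T(\bar\xi)$ and $W(\bar\xi,s)>1$ for $s>T(\bar\xi)$; the paper simply cites \eqref{T1}/\eqref{T2} in one line.
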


\begin{proof}
$(\ref{w1})$ follows immediately from $(\ref{T1})$ and $(\ref{T2})$. From $(\ref{y1})$ and $(\ref{y2})$ we have
\beqnn
e^{-T(\bar{\xi})}=\inf\left\{x|W(\bar{\xi},-\log x)<1\right\}\geqslant Y(\bar{\xi})\geqslant\sup\left\{x|W(\bar{\xi},-\log x)>1\right\},
\eeqnn
and by $(\ref{w1})$
\beqnn
\inf\left\{x|W(\bar{\xi},-\log x)<1\right\}=\sup\left\{x|W(\bar{\xi},-\log x)>1\right\},
\eeqnn
hence, $T(\bar{\xi})=-\log Y(\bar{\xi})$. Other properties are easy corollaries from Theorem $\ref{yu}$.
\qed
\end{proof}

\subsection{Proof of Theorem $\ref{t2}$}\label{ms4}

In order to prove Theorem $\ref{t2}$, we need the following two lemmas, which are similar as Lemma 2.2.4 and Lemma 2.2.5 in $\cite{SB77}$, we omit the details of the proof.

\begin{lemma}\label{sim}
Let $U(\bar{\xi})$ be as in Theorem $\ref{t2}$ and
\beqnn
V(\bar{\xi},x):=\inf\left\{y| y\geqslant 0 \text{ and } U(\bar{\xi},y)\geqslant x\right\},~~0\leqslant x<\infty.
\eeqnn
Then
\begin{eqnarray}
U\left(\bar{\xi},V(\bar{\xi},x)\right)\sim x
\end{eqnarray}
\end{lemma}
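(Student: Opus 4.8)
The statement to prove is Lemma~\ref{sim}: with $V(\bar\xi,x):=\inf\{y\ge 0:U(\bar\xi,y)\ge x\}$ the generalized inverse of the increasing function $U(\bar\xi,\cdot)$, one has $U(\bar\xi,V(\bar\xi,x))\sim x$ as $x\to\infty$.

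\medskip

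\textbf{Approach.} The plan is to exploit only two facts about $U(\bar\xi,\cdot)$: it is increasing with $U(\bar\xi,0)=0$ and $U(\bar\xi,x)\to\infty$, and it is slowly varying at infinity. First I would record the elementary ``Galois'' inequalities that hold for any increasing function and its right-continuous generalized inverse: for every $x$ in the range where $V(\bar\xi,x)$ is finite and positive,
\begin{eqnarray*}
U\bigl(\bar\xi,V(\bar\xi,x)-\bigr)\le x\le U\bigl(\bar\xi,V(\bar\xi,x)\bigr),
\end{eqnarray*}
so that $U(\bar\xi,V(\bar\xi,x))\ge x$ automatically, and the only work is the matching upper bound $U(\bar\xi,V(\bar\xi,x))\le (1+o(1))x$. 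Since $U(\bar\xi,0)=0$ and $U(\bar\xi,x)\to\infty$, $V(\bar\xi,x)$ is finite for all $x\ge 0$ and $V(\bar\xi,x)\to\infty$ as $x\to\infty$.

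\medskip

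\textbf{Key steps.} Fix $\lambda>1$. By slow variation, $U(\bar\xi,\lambda y)/U(\bar\xi,y)\to 1$ as $y\to\infty$, hence there is $y_0=y_0(\bar\xi,\lambda)$ with $U(\bar\xi,\lambda y)\le 2U(\bar\xi,y)$... more precisely, for any $\varepsilon>0$ there is $y_0$ such that $U(\bar\xi,\lambda y)\le(1+\varepsilon)U(\bar\xi,y)$ for all $y\ge y_0$. Now take $x$ large enough that $V(\bar\xi,x)\ge \lambda y_0$, and set $y:=V(\bar\xi,x)/\lambda\ge y_0$. By the left inequality above and monotonicity, $U(\bar\xi,y)\le U(\bar\xi,V(\bar\xi,x)-)\le x$. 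Therefore
\begin{eqnarray*}
U\bigl(\bar\xi,V(\bar\xi,x)\bigr)=U(\bar\xi,\lambda y)\le(1+\varepsilon)U(\bar\xi,y)\le(1+\varepsilon)x.
\end{eqnarray*}
Combined with $U(\bar\xi,V(\bar\xi,x))\ge x$, this gives $x\le U(\bar\xi,V(\bar\xi,x))\le(1+\varepsilon)x$ for all large $x$; letting $\varepsilon\downarrow 0$ yields $U(\bar\xi,V(\bar\xi,x))\sim x$. One should also note a harmless subtlety: if $U(\bar\xi,\cdot)$ has flat pieces the left-hand value $U(\bar\xi,V(\bar\xi,x)-)$ could fall short of $x$ by a jump, but since slow variation forces $U(\bar\xi,\cdot)$ to vary only subpolynomially the argument via $y=V/\lambda$ sidesteps this entirely — we never need $U$ at the jump point, only at $V/\lambda$, which lies strictly below $V$.

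\medskip

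\textbf{Main obstacle.} There is no deep obstacle; the lemma is the standard asymptotic-inversion fact for slowly varying functions (cf. the role of Lemma~2.2.4 in \cite{SB77}), and the only thing to be careful about is doing everything $\bar\xi$ by $\bar\xi$ for $\eta$-a.e.\ $\bar\xi$ and checking that the threshold $y_0$ may be chosen measurably in $\bar\xi$ — which it can, e.g.\ as $y_0(\bar\xi,\lambda,\varepsilon)=\inf\{y:U(\bar\xi,\lambda z)\le(1+\varepsilon)U(\bar\xi,z)\ \forall z\ge y\}$, a measurable functional of the sample path $U(\bar\xi,\cdot)$. The mild bookkeeping point is simply to confirm that ``$\sim$'' here is meant as $x\to\infty$ and that $V(\bar\xi,x)<\infty$ throughout, both of which follow from the stated hypotheses $U(\bar\xi,0)=0$ and $\lim_{x\to\infty}U(\bar\xi,x)=\infty$.
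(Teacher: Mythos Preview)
Your argument is correct and standard; the paper itself does not prove this lemma at all, merely citing Lemma~2.2.4 of \cite{SB77}, so there is no in-paper proof to compare against.

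One minor point worth tightening: the inequality $x\le U(\bar\xi,V(\bar\xi,x))$ in your ``Galois'' pair presupposes that the infimum defining $V$ is attained, which amounts to right-continuity of $U(\bar\xi,\cdot)$; Theorem~\ref{t2} does not assume this explicitly. (Your closing remark about ``flat pieces'' and the $y=V/\lambda$ device concerns the \emph{upper} bound and does not cover this.) The patch is immediate and symmetric to your upper-bound step: for any $\lambda>1$ one has $\lambda V(\bar\xi,x)>V(\bar\xi,x)$, hence $U(\bar\xi,\lambda V(\bar\xi,x))\ge x$ by definition of $V$, and slow variation gives
\[
U\bigl(\bar\xi,V(\bar\xi,x)\bigr)\ \ge\ \frac{U\bigl(\bar\xi,\lambda V(\bar\xi,x)\bigr)}{1+\varepsilon}\ \ge\ \frac{x}{1+\varepsilon}
\]
for all large $x$. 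Combined with your upper bound this yields $U(\bar\xi,V(\bar\xi,x))\sim x$ without any continuity hypothesis on $U$.
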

\qed

\begin{lemma}\label{case}
Let $F_{\bar{\xi}}$ be as in Theorem $\ref{t2}$ (2). If $F_{\bar{\xi}}$ is continuous at $x\in(0,\infty),$ then either $F_{\bar{\xi}}(x)=0$ and
\beqnn
\lim_{n} V\left(\bar{\xi},c_{n}(\bar{\xi})x\right)h_{n}(\bar{\xi},t)=0~~~~~\text{for  } 0<t<\infty,
\eeqnn
or $F_{\bar{\xi}}(x)=1$
\beqnn
\lim_{n} V\left(\bar{\xi},c_{n}(\bar{\xi})x\right)h_{n}(\bar{\xi},t)=\infty~~~~~\text{for  } 0<t<\infty,
\eeqnn
or $s:=-\log F_{\bar{\xi}}(x)$ is a $\bar{\xi}$-regular point and
\beqnn
\lim_{n} V\left(\bar{\xi},c_{n}(\bar{\xi})x\right)h_{n}(\bar{\xi},t)=
\begin{cases} 0 ~~~ \text{  if  }~~0<t<s\\
\infty ~~ \text{  if  }~~s<t<\infty.
\end{cases}
\eeqnn
\qed
\end{lemma}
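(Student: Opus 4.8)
The plan is to reduce the three cases to the corresponding cases of Theorem~\ref{ca1}(2) applied not to $Z_n(\bar\xi)$ itself but to the sequence $V(\bar\xi,c_n(\bar\xi)x)$ playing the role of the norming constants $c_n(\bar\xi)$ there. First I would recall from Lemma~\ref{sim} that $U(\bar\xi,V(\bar\xi,c_n(\bar\xi)x))\sim c_n(\bar\xi)x$, so that for any fixed $y>0$ the event $\{Z_n(\bar\xi)\le V(\bar\xi,c_n(\bar\xi)x)\,y\}$ is, up to the slowly-varying distortion of $U$, the same as $\{U(\bar\xi,Z_n(\bar\xi))/c_n(\bar\xi)\le x\}$ (using that $U(\bar\xi,\cdot)$ is increasing and slowly varying, hence $U(\bar\xi,ty)/U(\bar\xi,t)\to1$ as $t\to\infty$ for each fixed $y$). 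Consequently, since $\{U(\bar\xi,Z_n(\bar\xi))/c_n(\bar\xi)\}$ converges in distribution to $F_{\bar\xi}$ and $F_{\bar\xi}$ is continuous at $x$, the sequence $Z_n(\bar\xi)/V(\bar\xi,c_n(\bar\xi)x)$ converges in distribution to a limit whose distribution function $\tilde F_{\bar\xi}$ is the Heaviside-type law: $\tilde F_{\bar\xi}(y)=F_{\bar\xi}(x)$ for all $y\in(0,\infty)$; in particular $\tilde F_{\bar\xi}(0)=\tilde F_{\bar\xi}(\infty)=F_{\bar\xi}(x)$.

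Next I would feed this degenerate limit law into Theorem~\ref{ca1}(1), with $c_n(\bar\xi)$ there replaced by $V(\bar\xi,c_n(\bar\xi)x)$. The value $p:=F_{\bar\xi}(x)\in[0,1]$ selects exactly which of the four dichotomy cases applies: if $p=1$ we are in case (a) of Theorem~\ref{ca1}(1), giving $\lim_n h_n(\bar\xi,t)\,V(\bar\xi,c_n(\bar\xi)x)=\infty$ for all $0<t<\infty$; if $p=0$ we are in case (b), giving the same limit equal to $0$; and if $0<p<1$, since here $\tilde F_{\bar\xi}(0)=\tilde F_{\bar\xi}(\infty)=p$ we land in case (c), so a $\bar\xi$-regular point $s_r$ exists with $\lim_n h_n(\bar\xi,t)\,V(\bar\xi,c_n(\bar\xi)x)$ equal to $0$ for $0<t<s_r$ and $\infty$ for $s_r<t<\infty$. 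It then remains to identify $s_r$ with $s=-\log F_{\bar\xi}(x)$; this I would do by noting that Theorem~\ref{ca1}(2)(c), run in reverse, forces $Z_n(\bar\xi)/V(\bar\xi,c_n(\bar\xi)x)\to W(\bar\xi,s_r)$ $P_{\bar\xi}$-a.s., whose distribution (by $E_{\bar\xi}e^{-W(\bar\xi,s_r)}=e^{-s_r}$ together with $P_{\bar\xi}(W(\bar\xi,s_r)\in\{0,\infty\})=1$) assigns mass $e^{-s_r}$ to $0$; matching this against $\tilde F_{\bar\xi}(0)=p$ yields $e^{-s_r}=p$, i.e.\ $s_r=-\log F_{\bar\xi}(x)=s$. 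Rewriting the three conclusions with $V(\bar\xi,c_n(\bar\xi)x)h_n(\bar\xi,t)$ in place of $h_n(\bar\xi,t)V(\bar\xi,c_n(\bar\xi)x)$ gives precisely the statement.

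The main obstacle I anticipate is the first step: making rigorous the passage from convergence in distribution of $U(\bar\xi,Z_n(\bar\xi))/c_n(\bar\xi)$ to convergence in distribution of $Z_n(\bar\xi)/V(\bar\xi,c_n(\bar\xi)x)$, because $V$ is only a generalized inverse and the asymptotic equivalence in Lemma~\ref{sim} interacts delicately with the slowly varying (not regularly varying with positive index) nature of $U$ — one must be careful that $V(\bar\xi,c_n(\bar\xi)x)\to\infty$ so that the slow-variation estimates $U(\bar\xi,ty)/U(\bar\xi,t)\to1$ can be invoked, and that the continuity of $F_{\bar\xi}$ at $x$ is what prevents the limiting mass from splitting across the threshold. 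Everything after that is a bookkeeping translation through the already-established Theorem~\ref{ca1}, which is why, as in Schuh and Barbour, the full details can be safely omitted.
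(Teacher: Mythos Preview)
Your approach is correct and is precisely the route the paper defers to: the authors omit the proof, pointing to Lemma~2.2.5 of Schuh--Barbour \cite{SB77}, whose argument is exactly the reduction you describe---show that $Z_n(\bar\xi)/V(\bar\xi,c_n(\bar\xi)x)$ has the degenerate limit law with $\tilde F_{\bar\xi}(0)=\tilde F_{\bar\xi}(\infty)=F_{\bar\xi}(x)$, and then read off the conclusion from Theorem~\ref{ca1}(1) (cases (a)--(c)) together with the identification $e^{-s_r}=P_{\bar\xi}(W(\bar\xi,s_r)=0)=F_{\bar\xi}(x)$ from part (2)(c). The technical point you flag about needing $V(\bar\xi,c_n(\bar\xi)x)\to\infty$ is handled just as you anticipate: if $F_{\bar\xi}(x)>0$ then $c_n(\bar\xi)\to\infty$ (else along a bounded subsequence $U(\bar\xi,Z_n)/c_n\to\infty$, contradicting $F_{\bar\xi}(x)>0$), while if $F_{\bar\xi}(x)=0$ one can argue directly since $h_n(\bar\xi,t)\to 0$.
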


\begin{proof}\textbf{of Theorem~\ref{t2}}
(1) If $s_{i}$ is a $\bar{\xi}$-irregular point, and if $H(\bar{\xi},s_{i})=\lim_{n}\left(U\left(\bar{\xi},\frac{1}{h_{n}(\bar{\xi},s_{i})}\right)/c_{n}(\bar{\xi})\right)$ exists, then by Lemma $\ref{open}$, the limit $H(\bar{\xi},s)$ exists for all $I(\bar{\xi},s_{i})$ and is equal to $H
(\bar{\xi},s_{i})$, since $U(\bar{\xi})$ varies slowly. Thus $H(\bar{\xi})$ is continuous at $s_{i}$.

Since now we assume that $(\ref{exi})$ holds, then the points where the limit $H(\bar{\xi})$ does not exist can only be $\bar{\xi}$-regular points, and there are at most countably many such points. Further $H(\bar{\xi})$ is a monotonic function, and therefore at most countably many points in $(0,\infty)$ exist, where $H(\bar{\xi})$ is not continuous. Since $\bar{\xi}\in D$, by Corollary $\ref{cor1}$, for any $\bar{\xi}$-regular point $s_{r}$, $P_{\bar{\xi}}(T(\bar{\xi})= s_{r})=0$, and $T(\bar{\xi})\in(0,\infty)$ $P_{\bar{\xi}}$-a.s..

Take $0<s<T(\bar{\xi})<t<\infty$, then by $(\ref{w1})$,
\beqnn
1/h_{n}(\bar{\xi},t)<Z_{n}(\bar{\xi})<1/h_{n}(\bar{\xi},s)\quad\quad\text{eventually.}
\eeqnn
Therefore,
\beqnn
U\left(\bar{\xi},1/h_{n}(\bar{\xi},t)\right)/c_{n}(\bar{\xi})\leqslant
U\left(\bar{\xi},Z_{n}(\bar{\xi})\right)/c_{n}(\bar{\xi})\leqslant U\left(\bar{\xi},1/h_{n}(\bar{\xi},s)\right)/c_{n}(\bar{\xi})
\quad\text{eventually.}
\eeqnn
Since $H(\bar{\xi})$ is continuous at $T(\bar{\xi})$, and $H(\bar{\xi})$ exists at s and t arbitrarily close to $T(\bar{\xi})$, we have
\begin{eqnarray}
\lim_{n}U\left(\bar{\xi},Z_{n}(\bar{\xi})\right)/c_{n}(\bar{\xi})=H\left(\bar{\xi},T(\bar{\xi})\right)~~\text{almost surely}.
\end{eqnarray}

(2) We define
\beqnn
G\left(\bar{\xi},x\right)=\inf\left\{y~|~0\leqslant y<\infty~\text{and}~F_{\bar{\xi}}(y)\geqslant x\right\},~~~0\leqslant x<\infty.
\eeqnn
Let $G(\bar{\xi})$ be continuous at $y\in(0,1)$, and $x=G\left(\bar{\xi},y\right)$. Since $F_{\bar{\xi}}$ is right-continuous as a distribution function, we always have
\begin{eqnarray}\label{barf}
F_{\bar{\xi}}\left(G(\bar{\xi},y)\right)=F_{\bar{\xi}}\left(\inf\{z|F_{\bar{\xi}}(z)\geqslant y\}\right)\geqslant y.
\end{eqnarray}

If $F_{\bar{\xi}}(x)=y$, then $F_{\bar{\xi}}$ is strictly increasing at $x$, since $G(\bar{\xi})$ is continuous at $y$. We can choose $x_{1}<x<x_{2}$ arbitrarily close to $x$, such that $F_{\bar{\xi}}$ is continuous at $x_{1}$ and $x_{2}$. Lemma $\ref{case}$ implies
\begin{eqnarray}\label{bijiao}
V\left(\bar{\xi},c_{n}(\bar{\xi})x_{1}\right)<\left(1/h_{n}(\bar{\xi},s)\right)<V\left(\bar{\xi},c_{n}(\bar{\xi})x_{2}\right)
\end{eqnarray}
eventually for $s=-\log F_{\bar{\xi}}(x)=-\log y$.

Therefore by Lemma $\ref{sim}$, when $n\to\infty$,
\begin{eqnarray*}
\frac{U\left(\bar{\xi},V(\bar{\xi},c_{n}(\bar{\xi})x_{1})\right)}{c_{n}(\bar{\xi})}&\longrightarrow& x_{1},\\
\frac{U\left(\bar{\xi},V(\bar{\xi},c_{n}(\bar{\xi})x_{2})\right)}{c_{n}(\bar{\xi})}&\longrightarrow& x_{2}.
\end{eqnarray*}
Thus,
\beqnn
\lim_{n}U\left(\bar{\xi},1/h_{n}(\bar{\xi},s)\right)/c_{n}\left(\bar{\xi}\right)=x=G\left(\bar{\xi},e^{-s}\right)
\eeqnn
exists.

If on the other hand $F_{\bar{\xi}}(x)>y$, then
\beqnn
a:=F_{\bar{\xi}}(x-)<y<F_{\bar{\xi}}(x)=:b.
\eeqnn
Choose $x_{1}<x<x_{2}$ arbitrarily close to $x$ such that $F_{\bar{\xi}}$ is continuous at $x_{1}$ and $x_{2}$, and hence
\beqnn
-\log F_{\bar{\xi}}(x_{2})\leqslant -\log b<s=-\log y<-\log a\leqslant-\log F_{\bar{\xi}}(x_{1}).
\eeqnn
Again Lemma $\ref{case}$ implies $(\ref{bijiao})$ and therefore
$\displaystyle\lim_{n}U\left(\bar{\xi},1/h_{n}(\bar{\xi},s)\right)/c_{n}\left(\bar{\xi}\right)=G\left(\bar{\xi},e^{-s}\right).$

(3) Since both $G(\bar{\xi})$ and $G(\theta\bar{\xi})$ are continuous at all but at most countably many $s\in(0,\infty)$, there exists at least one $s_{0}\in(0,\infty)$ that $G(\bar{\xi})$ and $G(\theta\bar{\xi})$ are both continuous at $e^{-s_{0}},~e^{-h_{\xi_{0}}(s_{0})}.$
Then from $(\ref{2.3})$, for $\eta$-a.s. $\bar{\xi}$,
\begin{eqnarray}\label{ubar1}
\lim_{n\to\infty}U\left(\bar{\xi},\frac{1}{h_{n}(\bar{\xi},s_{0})}\right)/c_{n}\left(\bar{\xi}\right)=G\left(\bar{\xi},e^{-s_{0}}\right)~~~\text{exists},
\end{eqnarray}
\begin{eqnarray}\label{ubar2}
\lim_{n\to\infty}U\left(\theta\bar{\xi},\frac{1}{h_{n-1}(\theta\bar{\xi},h_{\xi_{0}}(s_{0}))}\right)/c_{n-1}\left(\theta\bar{\xi}\right)
=G\left(\theta\bar{\xi},e^{-h_{\xi_{0}}(s_{0})}\right)~~~\text{exists}.
\end{eqnarray}
Since from our assumption we know that $U\left(\bar{\xi},\frac{1}{h_{n}\left(\bar{\xi},s_{0}\right)}\right)=U\left(\theta\bar{\xi},\frac{1}{h_{n-1}\left(\theta\bar{\xi},h_{\xi_{0}}(s_{0})\right)}\right)$,
moreover, assumptions on $F_{\bar{\xi}}$ ensures that for $0<y<1$, $0<G(\bar{\xi},y)<\infty$ for $\eta$-a.s.$~\bar{\xi}$. Then combine with $(\ref{ubar1})$, $(\ref{ubar2})$, we have
\begin{eqnarray}\label{ubar3}
\lim_{n\to\infty}\frac{c_{n-1}(\theta\bar{\xi})}{c_{n}(\bar{\xi})}=
\frac{G\left(\bar{\xi},e^{-s_{0}}\right)}{G\left(\theta\bar{\xi},e^{-h_{\xi_{0}}(s_{0})}\right)}:=\alpha(\bar{\xi})>0.
\end{eqnarray}

Then $(\ref{2.3})$, $(\ref{ubar3})$ and the assumption $U(\bar{\xi})=U(\theta\bar{\xi})$ imply
\begin{eqnarray}\label{conti}
\frac{G\left(\bar{\xi},e^{-s}\right)}{G\left(\theta\bar{\xi},e^{-h_{\xi_{0}}(s)}\right)}=
\lim_{n\to\infty}\frac{c_{n-1}\left(\theta\bar{\xi}\right)}{c_{n}\left(\bar{\xi}\right)}=\alpha\left(\bar{\xi}\right)
\end{eqnarray}
for all s for which $G(\bar{\xi})$ is continuous at $e^{-s}$. In particular $G(\bar{\xi})$ is continuous at $e^{-s}$ if and only if $G(\theta\bar{\xi})$ is continuous at $e^{-h_{\xi_{0}}(s)}$.

Since $G(\bar{\xi})$ is left-continuous, $(\ref{conti})$ is true for all $s\in(0,\infty)$.

Now for every $0<u<\infty$,
\begin{eqnarray}\label{yf}
\left\{y|G(\theta\bar{\xi},y)\leqslant u\right\}=\left\{y|y\leqslant F_{\theta\bar{\xi}}(u)\right\},
\end{eqnarray}
since $F_{\theta\bar{\xi}}(x)<y$ for all $x<G\left(\theta\bar{\xi},y\right)$ by definition, and because of $(\ref{barf})$.

Our assumption ensures that $-\log F_{\theta\bar{\xi}}(u)\in(0,\infty)$ for $0<u<\infty$.
Since $F_{\theta\bar{\xi}}$ is right-continuous there exists a sequence of points $u_{n}>u$, such that
$\lim_{n\to\infty} F_{\theta\bar{\xi}}(u_{n})=F_{\theta\bar{\xi}}(u)$, and $F_{\theta\bar{\xi}}$ is continuous at every $u_{n}$. Lemma $\ref{case}$ implies that $-\log F_{\theta\bar{\xi}}(u_{n})$ are
$\theta\bar{\xi}$-regular points, and therefore for every $0<u<\infty$,
\beqnn
-\log F_{\theta\bar{\xi}}(u) \text{ is a $\theta\bar{\xi}$-regular point,}
\eeqnn
since $\{s|s \text{ is } \theta\bar{\xi}\text{-regular}\}\cup \{0,\infty\}$ is a closed set by Lemma $\ref{open}$.

Now since
\beqnn
k_{\xi_{0}}\left(-\log F_{\theta\bar{\xi}}(u)\right)=-\log f_{\xi_{0}}\left(F_{\theta\bar{\xi}}(u)\right),
\eeqnn
$-\log f_{\xi_{0}}\left(F_{\theta\bar{\xi}}(u)\right)\text{ is a $\bar{\xi}$-regular point}.$

Theorem $\ref{t2}$ (1),(2) tells us that $\lim_{n\to\infty}U\left(\bar{\xi},Z_{n}(\bar{\xi})\right)/c_{n}\left(\bar{\xi}\right)=G\left(\bar{\xi},Y(\bar{\xi})\right)~~P_{\bar{\xi}}$-a.s..
This combines with $(\ref{gth})$, $(\ref{yf})$ imply
\beqnn
F_{\bar{\xi}}\left(\alpha(\bar{\xi})u\right)&=&P_{\bar{\xi}}\left(G\left(\bar{\xi},Y(\bar{\xi})\right)\leqslant \alpha(\bar{\xi})u\right)
=P_{\bar{\xi}}\left(G\left(\bar{\xi},e^{-T(\bar{\xi})}\right)\leqslant \frac{G\left(\bar{\xi},e^{-T(\bar{\xi})}\right)}{G\left(\theta\bar{\xi},e^{-h_{\xi_{0}}(T(\bar{\xi}))}\right)} u\right )\\
&=& P_{\bar{\xi}}\left(G\left(\theta\bar{\xi},e^{-h_{\xi_{0}}(T(\bar{\xi}))}\leqslant u\right)\right)
=P_{\bar{\xi}}\left(e^{-h_{\xi_{0}}(T(\bar{\xi}))}\leqslant F_{\theta\bar{\xi}}(u)\right)\\
&=&P_{\bar{\xi}}\left(f_{\xi_{0}}^{(-1)}\left(e^{-T(\bar{\xi})}\right)\leqslant F_{\theta\bar{\xi}}(u)\right)
=P_{\bar{\xi}}\left(Y\left(\bar{\xi}\right)\leqslant f_{\xi_{0}}\left(F_{\theta\bar{\xi}}(u)\right)\right)\\
&=&f_{\xi_{0}}\left(F_{\theta\bar{\xi}}(u)\right)
\eeqnn
for $0<u<\infty$, the last equality is due to the fact that $-\log f_{\xi_{0}}(F_{\theta\bar{\xi}}(u))$ is a $\bar{\xi}$-regular point and Theorem \ref{yu}. Since
\beqnn
F_{\bar{\xi}}\left(\alpha(\bar{\xi})\cdot 0\right)=F_{\bar{\xi}}(0)=0=f_{\xi_{0}}\left(F_{\theta\bar{\xi}}(0)\right),
\eeqnn
$(\ref{xidi})$ is true.
\qed
\end{proof}

\begin{remark}
Since $T\left(\bar{\xi}\right)\in(0,\infty)$, Theorem $\ref{t2}$ $(1)$ tells us that if we can find suitable $U(\bar{\xi},x)$, $c_{n}(\bar{\xi})$ that makes $H\left(\bar{\xi},s\right):=\lim_{n}U\left(\bar{\xi},1/h_{n}(\bar{\xi},s)\right)/c_{n}\left(\bar{\xi}\right)$ exists for all but at most countably many $s\in(0,\infty)$ and $0<H\left(\bar{\xi},s\right)<\infty$ for $s\in(0,\infty)$, then $U\left(\bar{\xi},Z_{n}(\bar{\xi})\right)/c_{n}\left(\bar{\xi}\right)$ has a non-degenerate and proper limit. What's more, $(1)$ combines with $(2)$ show that if $U\left(\bar{\xi},Z_{n}(\bar{\xi})\right)/c_{n}\left(\bar{\xi}\right)$ converges in distribution, then it must converges almost surely.
\end{remark}

\section{Sufficient  criteria for regular process}\label{sms}

From Definition $\ref{rpr}$ we know that for a regular process, for $a.e.~\bar{\xi}$, any $s<t$, $\lim_{n\to\infty}\frac{h_{n}\left(\bar{\xi},s\right)}{h_{n}\left(\bar{\xi},t\right)}=0$, then $d\left(\bar{\xi},s\right)=0$ since
\beqnn
d(\bar{\xi},s)=\lim_{n\to\infty}\frac{h_{n}\left(\bar{\xi},s\right)}{h_{n-1}\left(\theta\bar{\xi},s\right)}=
\lim_{n\to\infty}\frac{h_{n}\left(\bar{\xi},s\right)}{h_{n}\left(\bar{\xi},k_{\xi_{0}}(s)\right)},
\eeqnn
where $k_{\xi_{0}}(s)>s$. Thus, every regular process satisfies Assumption {(\bf A2)}. In this section, we will derive some sufficient conditions for a process to be regular.

Let $Q_{\xi_{i}}:~[0,1)\longrightarrow[0,1)$ defined by
\beqnn
Q_{\xi_{i}}(s)=\frac{f_{\xi_{i}}^{'}(s)(1-s)}{1-f_{\xi_{i}}(s)}.
\eeqnn
Since $f_{\xi_{i}}(x)$ is strictly convex and $f_{\xi_{i}}(1)=1$, then $f_{\xi_{i}}^{'}(s)<\frac{1-f_{\xi_{i}}(s)}{1-s}$, i.e.
\beqnn
0\leqslant Q_{\xi_{i}}(s)<1.
\eeqnn

\begin{theorem}\label{zxy}
$s\in(0,\infty)$ is $\bar{\xi}$-regular if and only if $\displaystyle\prod_{n=0}^{\infty} Q_{\xi_{n}}\left(f_{n+1}^{(-1)}\left(\bar{\xi},e^{-s}\right)\right)=0,$ where $f_{n}(\bar{\xi},s)=f_{\xi_{0}}\left(\cdots(f_{\xi_{n-1}}(s))\cdots\right)$.
\end{theorem}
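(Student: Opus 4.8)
\textbf{Proof proposal for Theorem \ref{zxy}.}

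The plan is to translate the characterization of $\bar{\xi}$-regularity from Theorem \ref{re1} into a statement about the product $\prod_{n} Q_{\xi_n}\bigl(f_{n+1}^{(-1)}(\bar{\xi},e^{-s})\bigr)$. Recall that Theorem \ref{re1} says $s$ is $\bar{\xi}$-regular iff $\lim_n h_n(\bar{\xi},t)/h_n(\bar{\xi},s)=0$ for all $0<t<s$. The key is to produce a usable formula for the ratio $h_{n+1}(\bar{\xi},s)/h_n(\bar{\xi},s)$, or equivalently for $h_{\xi_n}(u)/u$ evaluated at the appropriate iterate $u = h_n(\bar{\xi},s)$, in terms of the function $Q$. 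Since $h_{\xi_i}(s) = -\log f_{\xi_i}^{(-1)}(e^{-s})$, differentiating and manipulating should show that $h_{\xi_i}'(s)$ equals $1/Q_{\xi_i}\bigl(f_{\xi_i}^{(-1)}(e^{-s})\bigr)$ — indeed if $v = f_{\xi_i}^{(-1)}(e^{-s})$ then $f_{\xi_i}(v)=e^{-s}$, and the chain rule gives $h_{\xi_i}'(s) = \frac{e^{-s}}{f_{\xi_i}'(v)\,v} = \frac{1-f_{\xi_i}(v)}{f_{\xi_i}'(v)(1-v)}\cdot\frac{e^{-s}(1-v)}{(1-f_{\xi_i}(v))v}$. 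First I would nail down this identity and the analogous one for the composed maps; convexity of $f_{\xi_i}$ gives $0\le Q_{\xi_i}<1$, so $h_{\xi_i}$ has derivative $\ge 1$, consistent with $h$ being a ``stretching'' map.

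Next I would express $h_n(\bar{\xi},s)/h_n(\bar{\xi},t)$ as a telescoping product. Writing $h_n(\bar{\xi},s) = h_{\xi_{n-1}}(h_{n-1}(\theta\bar{\xi},\dots))$ — or more conveniently working with $h_n(\bar{\xi},s)/h_n(\bar{\xi},t)$ directly via the mean value theorem applied to $h_{\xi_{n-1}}$ on the interval between $h_{n-1}(\dots,s)$ and $h_{n-1}(\dots,t)$ — I would peel off one coordinate at a time, each step contributing a factor $h_{\xi_k}'(\zeta_k)$ at some intermediate point $\zeta_k$. Because $h_{\xi_k}$ is convex (as $f_{\xi_k}^{(-1)}$ is concave, or one checks directly), these intermediate derivatives are squeezed between $h_{\xi_k}'$ evaluated at the two endpoints, and as $t\uparrow s$ the endpoints collapse, so the ratio behaves like $\prod_{k=0}^{n-1} h_{\xi_k}'\bigl(\text{iterate of }s\bigr) = \prod_{k=0}^{n-1} Q_{\xi_k}\bigl(f_{k+1}^{(-1)}(\bar{\xi},e^{-s})\bigr)^{-1}$ up to controllable error. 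Here one must be careful that the iterate of $s$ under $h_{\xi_{n-1}}\circ\cdots\circ h_{\xi_k}$ applied to the ``inner'' argument matches $f_{k+1}^{(-1)}(\bar{\xi},e^{-s})$ after the $-\log$/$\exp$ change of variables; checking the indexing is where I expect the bookkeeping to be delicate.

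With the ratio identity in hand, the two directions follow. If the product $\prod_n Q_{\xi_n}(\cdots)$ diverges to $0$, then its reciprocal (which controls $h_n(\bar{\xi},s)/h_n(\bar{\xi},t)$ from the right comparison, or rather $h_n(\bar{\xi},t)/h_n(\bar{\xi},s)$) tends to $0$ for every $t<s$, giving $\bar{\xi}$-regularity via Theorem \ref{re1}. Conversely, if the partial products stay bounded below by some $\epsilon>0$, then for $t$ close enough to $s$ the ratio $h_n(\bar{\xi},t)/h_n(\bar{\xi},s)$ stays bounded below, so the defining limit for regularity fails and $s$ is $\bar{\xi}$-irregular. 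For the converse direction I would use the slope-monotonicity of $Q_{\xi_i}$ or at least an elementary estimate controlling $h_{\xi_i}'$ near a point by $Q_{\xi_i}$ at that point, together with the fact (from Lemma \ref{open}) that irregular points come in open intervals so that ``$t$ close to $s$ with $t$ irregular'' is the right scenario to rule out. The main obstacle will be making the passage from the mean-value-theorem intermediate points to the clean product $\prod Q_{\xi_n}(f_{n+1}^{(-1)}(\bar{\xi},e^{-s}))$ rigorous — i.e., showing the error factors $\prod_k h_{\xi_k}'(\zeta_k)/h_{\xi_k}'(s_k)$ stay bounded (say, by a uniform comparison using convexity of $h_{\xi_k}$ on the relevant shrinking intervals) so that divergence of the clean product is genuinely equivalent to divergence of the true ratio. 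This is exactly the analogue of the estimates behind Theorem 1.1.2 and the product criterion in Schuh and Barbour, adapted coordinate-by-coordinate to the random environment.
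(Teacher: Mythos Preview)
Your route through the mean value theorem on $h_n(\bar{\xi},t)/h_n(\bar{\xi},s)$ is not the paper's, and the gap you yourself flag is real rather than cosmetic. Peeling off coordinates via MVT on the ratio does not produce a clean telescoping product: writing $h_n(\bar{\xi},s)/h_n(\bar{\xi},t)$ and applying MVT to the outermost $h_{\xi_{n-1}}$ gives you a factor $h_{\xi_{n-1}}'(\zeta_{n-1})$ multiplying a \emph{difference} $h_{n-1}(\cdot,s)-h_{n-1}(\cdot,t)$, not a ratio, so the inductive step does not close. Worse, the intervals $[h_k(\bar{\xi},t),h_k(\bar{\xi},s)]$ do not shrink with $k$ --- each $h_{\xi_k}$ has derivative $\ge 1$, so they stretch --- and hence your proposed ``convexity on shrinking intervals'' argument cannot control the intermediate-point error. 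Also note that $h_{\xi_i}'(s)$ is \emph{not} exactly $1/Q_{\xi_i}(f_{\xi_i}^{(-1)}(e^{-s}))$: with $v=f_{\xi_i}^{(-1)}(e^{-s})$ one has $h_{\xi_i}'(s)=f_{\xi_i}(v)/(v\,f_{\xi_i}'(v))$, which differs from $1/Q_{\xi_i}(v)$ by the factor $f_{\xi_i}(v)(1-v)/\bigl(v(1-f_{\xi_i}(v))\bigr)$; this is not $1$, though its product over $k$ telescopes to a harmless term.

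The paper's argument is much shorter because it avoids comparing two points $t<s$ altogether. It observes (from the proof of Theorem~\ref{re1}) that $s$ is $\bar{\xi}$-regular iff $k_n(\bar{\xi},h_n(\bar{\xi},s)x)\to s$ for all $x>0$; since $x\mapsto k_n(\bar{\xi},h_n(\bar{\xi},s)x)$ is concave and equals $s$ at $x=1$, this is equivalent to the single-point derivative condition
\[
\gamma_n(\bar{\xi},s):=\frac{d}{dx}\,k_n\bigl(\bar{\xi},h_n(\bar{\xi},s)x\bigr)\Big|_{x=1}\longrightarrow 0.
\]
One then computes $\gamma_n$ directly: it equals $e^{s}\,f_n'\bigl(\bar{\xi},f_n^{(-1)}(\bar{\xi},e^{-s})\bigr)\,f_n^{(-1)}(\bar{\xi},e^{-s})\bigl(-\log f_n^{(-1)}(\bar{\xi},e^{-s})\bigr)$. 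Using $-\log x\sim 1-x$ as $f_n^{(-1)}(\bar{\xi},e^{-s})\to 1$ and the chain-rule factorization $f_n'(\bar{\xi},\cdot)=\prod_{j=0}^{n-1}f_{\xi_j}'(f_{j+1}^{(-1)}(\bar{\xi},\cdot))$, the condition $\gamma_n\to 0$ becomes exactly $\prod_{n\ge 0} Q_{\xi_n}\bigl(f_{n+1}^{(-1)}(\bar{\xi},e^{-s})\bigr)=0$, with no intermediate points and no error terms. If you want to salvage your approach, differentiate $h_n$ at $s$ directly via the chain rule (rather than via MVT on a ratio) and then relate $h_n'(s)/h_n(s)$ to regularity through the concavity argument above; that is essentially the paper's proof in different clothing.
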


\begin{proof}
From the proof of Theorem $\ref{re1}$(for details see \cite{SB77} Theorem 1.1.2) we know that $s$ is $\bar{\xi}$-regular if and only if
\beqnn
\lim_{n} k_{n}\left(\bar{\xi},h_{n}(\bar{\xi},s)x\right)=s~~~\text{for all } 0<x<\infty.
\eeqnn
Since $k_{n}\left(\bar{\xi},h_{n}(\bar{\xi},s)x\right)$ is a concave function of $x$, $k_{n}\left(\bar{\xi},h_{n}(\bar{\xi},s)\cdot 1\right)=s$ and $k_{n}\left(\bar{\xi},h_{n}(\bar{\xi},s)x\right)\leqslant s$ for all $x\leqslant 1(\geqslant s~\text{for all } x\geqslant 1)$, this is equivalent to
\begin{eqnarray}\label{reeq}
\gamma_{n}\left(\bar{\xi},s\right)=\frac{d}{dx}\big(k_{n}(\bar{\xi},h_{n}(\bar{\xi},s)\cdot x)\big)\bigg|_{x=1}\stackrel{n\to\infty} {\longrightarrow}0.
\end{eqnarray}
We can calculate that
\beqnn
\gamma_{n}\left(\bar{\xi},s\right)=e^{s}\cdot f_{n}^{'}\left(\bar{\xi},f_{n}^{(-1)}\left(\bar{\xi},e^{-s}\right)\right)f_{n}^{(-1)}\left(\bar{\xi},e^{-s}\right)\left(-\log f_{n}^{(-1)}\left(\bar{\xi},e^{-s}\right)\right),
\eeqnn
and since
\beqnn
\lim_{n\to\infty}\left(-\log f_{n}^{(-1)}\left(\bar{\xi},e^{-s}\right)\right)\left(1- f_{n}^{(-1)}\left(\bar{\xi},e^{-s}\right)\right)\to 1,
\eeqnn
$(\ref{reeq})$ is equivalent to
\beqnn
f_{n}^{'}\left(\bar{\xi},f_{n}^{(-1)}\left(\bar{\xi},e^{-s}\right)\right)\left(1- f_{n}^{(-1)}\left(\bar{\xi},e^{-s}\right)\right)\to 0.
\eeqnn
Since
\beqnn
f_{n}^{'}\left(\bar{\xi},f_{n}^{(-1)}\left(\bar{\xi},e^{-s}\right)\right)=\prod_{j=0}^{n-1}f_{\xi_{j}}^{'}\left(f_{j+1}^{(-1)}\left(\bar{\xi},e^{-s}\right)\right),
\eeqnn
thus $(\ref{reeq})$ is equivalent to $\displaystyle\prod_{n=0}^{\infty} Q_{\xi_{n}}\left(f_{n+1}^{(-1)}\left(\bar{\xi},e^{-s}\right)\right)=0.$
\qed
\end{proof}

\begin{corollary}\label{coreg}
If $\mathbb P\left(\left\{\xi_{0}:\sup_{s<1}Q_{\xi_{0}}(s)\leqslant c<1\right\}\right)>0$, then $\{Z_{n}\}$ is a regular branching process.
\end{corollary}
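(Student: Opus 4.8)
The plan is to read Corollary~\ref{coreg} off Theorem~\ref{zxy} via a Borel--Cantelli argument that uses the i.i.d.\ structure of $\bar\xi$. Put
\[
B:=\Bigl\{\xi:\ \sup_{0\le t<1}Q_{\xi}(t)\le c\Bigr\},
\]
so that by hypothesis $p:=\eta(\xi_{0}\in B)>0$. Since $\bar\xi=\{\xi_{n}\}_{n\ge 0}$ is i.i.d., the events $\{\xi_{n}\in B\}$ are independent with $\sum_{n}\eta(\xi_{n}\in B)=\sum_n p=\infty$; hence by the second Borel--Cantelli lemma there is an event $\Omega_{0}$ with $\eta(\Omega_{0})=1$ on which $\xi_{n}\in B$ for infinitely many $n$.

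Next I would fix $\bar\xi\in\Omega_{0}$ and an arbitrary $s\in(0,\infty)$ and show $\prod_{n\ge 0}Q_{\xi_{n}}\bigl(f_{n+1}^{(-1)}(\bar\xi,e^{-s})\bigr)=0$. First note that $f_{n+1}^{(-1)}(\bar\xi,e^{-s})\in[0,1)$ for every $n$: under {\bf(A1)} each $f_{\xi_{j}}$ is a continuous strictly increasing bijection of $[0,1]$ onto $[0,1]$ with $f_{\xi_{j}}(0)=0$, so each $f_{\xi_{j}}^{(-1)}$ maps $[0,1)$ into $[0,1)$, and $e^{-s}\in(0,1)$, whence so does the composition $f_{n+1}^{(-1)}(\bar\xi,\cdot)=f_{\xi_{n}}^{(-1)}\!\bigl(\cdots f_{\xi_{0}}^{(-1)}(\cdot)\cdots\bigr)$. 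By the general bound $0\le Q_{\xi_{i}}(\cdot)<1$ recorded before Theorem~\ref{zxy}, every factor of the product lies in $[0,1)$; and whenever $\xi_{n}\in B$ the corresponding factor is at most $c<1$, since its argument lies in $[0,1)$. As $\xi_{n}\in B$ for infinitely many $n$, the partial products are bounded above by $c^{k}$ with $k\to\infty$, so the infinite product is $0$.

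By Theorem~\ref{zxy} this means $s$ is $\bar\xi$-regular; since $s\in(0,\infty)$ was arbitrary, $\{Z_{n}(\bar\xi)\}$ is $\bar\xi$-regular for every $\bar\xi\in\Omega_{0}$, and $\eta(\Omega_{0})=1$, so by Definition~\ref{rpr} the process $\{Z_{n}\}$ is regular.

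I do not expect a genuine obstacle here: once Theorem~\ref{zxy} is in hand the argument is routine. The two points that need a little care are (i) that the Borel--Cantelli event $\Omega_{0}$ does not depend on $s$, so one actually gets \emph{every} $s\in(0,\infty)$ regular (not merely all but countably many), and (ii) the verification that $f_{n+1}^{(-1)}(\bar\xi,e^{-s})$ stays strictly below $1$, which is exactly where {\bf(A1)} enters so that $Q_{\xi_n}$ can be controlled by the supremum over $[0,1)$.
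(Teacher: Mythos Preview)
Your proof is correct. Both you and the paper reduce to Theorem~\ref{zxy} by showing $\prod_{n\ge 0} Q_{\xi_n}\bigl(f_{n+1}^{(-1)}(\bar\xi,e^{-s})\bigr)=0$ for $\eta$-a.e.\ $\bar\xi$ and every $s$, but the mechanisms differ. The paper bounds each factor by $\sup_{t<1}Q_{\xi_n}(t)$, passes to logarithms, and applies the strong law of large numbers to the i.i.d.\ sequence $\log\bigl(\sup_{t<1}Q_{\xi_n}(t)\bigr)$: since these variables are nonpositive and strictly negative with positive probability, their Ces\`aro averages tend to a negative limit, forcing the sum to $-\infty$. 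Your Borel--Cantelli route is more elementary and sidesteps any integrability issues: you only need that all factors are $\le 1$ and that infinitely many are $\le c<1$. The LLN approach would extend more naturally to hypotheses of the form $\mathbb{E}\log\bigl(\sup_{t<1}Q_{\xi_0}(t)\bigr)<0$, but for the condition as stated your argument is cleaner and fully adequate.
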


\begin{proof}
\begin{eqnarray}\label{ifp2}
\mathbb E\prod_{i=0}^{\infty}Q_{\xi_{i}}\left(f_{i+1}^{(-1)}(e^{-s})\right)&=&
\mathbb Ee^{\log \prod_{i=0}^{\infty}Q_{\xi_{i}}\left(f_{i+1}^{(-1)}(e^{-s})\right)}\nonumber \\
&=&\mathbb Ee^{\sum_{i=0}^{\infty}\log Q_{\xi_{i}}\left(f_{i+1}^{(-1)}(e^{-s})\right)}\nonumber\\
&\leqslant& \mathbb Ee^{\sum_{i=0}^{\infty}\log\left(\sup_{0<s<1} Q_{\xi_{i}}(s)\right)}.
\end{eqnarray}
If
\begin{eqnarray}\label{ifp}
\mathbb P\left(\left\{\xi_{0}:\sup_{s<1}Q_{\xi_{0}}(s)\leqslant c<1\right\}\right)>0,
\end{eqnarray}
since $\{\xi_{i}\}$ is a sequence of independent and identically distributed random variables, we know that
 $\mathbb P$-a.e.,
\beqnn
\frac{\sum_{i=0}^{n}\log\left(\sup_{0<s<1} Q_{\xi_{i}}(s)\right)}{n}\stackrel{n\to\infty} {\longrightarrow}
\mathbb E\log\left(\sup_{0<s<1} Q_{\xi_{0}}(s)\right).
\eeqnn
Since $\mathbb P(\sup Q_{\xi_{0}}(s)\leqslant 1)=1$, $(\ref{ifp})$ ensures that on a set with positive probability $\sup_{0<s<1} Q_{\xi_{0}}(s)<1$, thus $\mathbb E \log(\sup_{0<s<1} Q_{\xi_{i}}(s))<0$, that is to say, for $\eta$-a.e. $\bar{\xi}$,
\beqnn
\sum_{i=0}^{\infty}\log \left(\sup_{0<s<1} Q_{\xi_{i}}(s)\right)=-\infty~~~P_{\bar{\xi}}\text{-a.e.}.
\eeqnn
As a result,
\beqnn
\mathbb E\prod_{i=0}^{\infty}Q_{\xi_{i}}\left(f_{i+1}^{(-1)}(e^{-s})\right)=0,
\eeqnn
for any $0<s<\infty$.

From Theorem $\ref{zxy}$ we know that for $\eta$-a.e.$~\bar{\xi}$, every $s\in(0,\infty)$ is a $\bar{\xi}$-regular point, then $\{Z_{n}(\bar{\xi})\}$ is $\bar{\xi}$ regular $\eta$-a.e., and $\{Z_{n}\}$ is a regular branching process by Definition
$\ref{rpr}$.
\qed
\end{proof}

\begin{example}\label{exm}
Let $f_{\xi_{i}}(s)=1-(1-s)^{\alpha_{\xi_{i}}}$, where $\{\alpha_{\xi_{i}}\}$ is a collection of independent and identically distributed random variables, taking values in $(0,1-\epsilon)$ ($0<\epsilon<1$ is a constant). Then this process is a regular process which satisfies our assumption and $U(\bar{\xi},x)=\log x,~c_{n}(\bar{\xi})=\frac{1}{\alpha_{\xi_{0}}}\cdots\frac{1}{\alpha_{\xi_{n-1}}}$ is a suitable choice for the normalization of $\{Z_{n}\}$.
\end{example}
\begin{proof}
$\left( 1 \right)$ Since $f_{\xi_{0}}(s)=1-(1-s)^{\alpha_{\xi_{0}}}$, we can calculate that $f'_{\xi_{0}}(s)=\alpha_{\xi_{0}}(1-s)^{\alpha_{\xi_{0}}-1}$. Since $0<\alpha_{\xi_{0}}<1-\epsilon, \mathbb{E}\log m(\xi_{0})=\infty$.

Let $r_{n}(\bar{\xi},s):=1-f_{\xi_{n-1}}^{(-1)}\left(\cdots \left(f_{\xi_{0}}^{(-1)}(1-s)\right)\cdots\right)$ for all $0\leqslant  s\leqslant 1$, then it is clear that for all $0<s<1$,
\beqnn
r_{n}(\bar{\xi},s)\sim h_{n}\left(\bar{\xi},-\log (1-s)\right).
\eeqnn
In our example, it is easy to calculate that $r_{n}(\bar{\xi},s)=s^{\frac{1}{\alpha_{\xi_{0}}}\frac{1}{\alpha_{\xi_{1}}}\cdots\frac{1}{\alpha_{\xi_{n-1}}}}$. Then for any $0<s<1$,
\beqnn
d\left(\bar{\xi},-\log (1-s)\right)=\lim_{n\to\infty}\frac{h_{n+1}\left(\bar{\xi},-\log (1-s)\right)}{h_{n}\left(\theta\bar{\xi},-\log (1-s)\right)}=\lim_{n\to\infty}\frac{r_{n+1}\left(\bar{\xi},s\right)}{r_{n}\left(\theta\bar{\xi},s\right)}=
\lim_{n\to\infty}\frac{s^{\frac{1}{\alpha_{\xi_{0}}}\frac{1}{\alpha_{\xi_{1}}}\cdots\frac{1}{\alpha_{\xi_{n}}}}}
{s^{\frac{1}{\alpha_{\xi_{1}}}\cdots\frac{1}{\alpha_{\xi_{n}}}}}=0,
\eeqnn
$\eta$-a.e.$~\bar{\xi}$, that means for any $0<s<\infty$, $d(\bar{\xi},s)=0$. Combined with the fact that $f_{\xi_{0}}(0)=0$, this example satisfies Assumption ${\bf(A1)}$ and ${\bf(A2)}$.

$\left( 2 \right)$ Since $Q_{\xi_{i}}(s)=\frac{f'_{\xi_{i}}(s)(1-s)}{1-f_{\xi_{i}}(s)}=\alpha_{\xi_{i}}$, $\mathbb{P}\left(\left\{\xi_{0}:\sup_{s<1} Q_{\xi_{0}}(s)\leqslant 1-\epsilon\right\}\right)=1$. From Corollary \ref{coreg} we know that $\{Z_{n}\}$ is a regular branching process.

$\left( 3 \right)$ If we choose
\beqnn
U(\bar{\xi},x)=\log x,~~c_{n}(\bar{\xi})=\frac{1}{\alpha_{\xi_{0}}}\cdots\frac{1}{\alpha_{\xi_{n-1}}},
\eeqnn
by calculation,
\beqnn
\lim_{n\to\infty}U\left(\bar{\xi},\frac{1}{h_{n}(\bar{\xi},s)}\right)/c_{n}(\bar{\xi})&=&
\lim_{n\to\infty}U\left(\bar{\xi},\frac{1}{r_{n}(\bar{\xi},1-e^{-s})}\right)/c_{n}(\bar{\xi})\\
&=&\frac{\log (1-e^{-s})^{-\frac{1}{\alpha_{\xi_{0}}}\cdots\frac{1}{\alpha_{\xi_{n-1}}}}}{\frac{1}{\alpha_{\xi_{0}}}\cdots\frac{1}{\alpha_{\xi_{n-1}}}}\\
&=&-\log (1-e^{-s})\in (0,\infty).
\eeqnn
Then from Theorem $\ref{t2}$ we know that in this case
\beqnn
\lim_{n\to\infty}\frac{\log\left( Z_{n}(\bar{\xi})\right)}{c_{n}\left(\bar{\xi}\right)}=-\log \left(1-e^{-T(\bar{\xi})}\right)=-\log \left(1-Y(\bar{\xi})\right)\in(0,\infty).
\eeqnn
Thus this is a suitable choice for the normalization of $\{Z_{n}\}.$

$\left( 4 \right)$ Note that in this example $\lim_{n \to \infty} \frac{c_{n-1}\left(\theta\bar{\xi}\right)}{c_{n}\left(\bar{\xi}\right)}=\alpha_{\xi_{0}}$. If we use $F_{\bar{\xi}}$ to denote the distribution function of the limit of $\frac{\log \left(Z_{n}(\bar{\xi})\right)}{c_{n}\left(\bar{\xi}\right)}$, we have
\beqnn
F_{\bar{\xi}}(x)=P_{\bar{\xi}}\left(-\log (1-Y(\bar{\xi}))\leqslant x\right).
\eeqnn
Since $\{Z_{n}\}$ is a regular branching process, Theorem \ref{yu} tells us that for $\eta$-a.s. $\bar{\xi}$, $Y(\bar{\xi})$ is uniformly distributed on $(0,1)$. Thus $F_{\bar{\xi}}(x)=1-e^{-x}$. Then
\beqnn
f_{\xi_{0}}\left(F_{\theta\bar{\xi}}(u)\right)=f_{\xi_{0}}\left(1-e^{-u}\right)=1-e^{-\alpha_{\xi_{0}}u}=F_{\bar{\xi}}(\alpha_{0}u),
\eeqnn
which coincides with (\ref{xidi}).
\qed
\end{proof}


\begin{thebibliography}{99}
\baselineskip=14pt

\bibitem{A13} Amini, O., Devroye, L., Griffiths, S.,  Olver, N. (2013) On explosions in heavy-tailed branching random walks. {\it Ann. Prob}. \textbf{41(3B)}, 1864-1899.



\bibitem{AK711} Athreya, K.B. and Kailin, S. (1971) On branching processes with random environments I: Extinction probabilities. {\it Ann. Math. Statist}. \textbf{42}, 1499-1520.

\bibitem{AK712} Athreya, K.B. and Kailin, S. (1971) On branching processes with random environments II: Limit theorems. {\it Ann. Math. Statist}. \textbf{42}, 1843-1858.

\bibitem{AN72} Athreya, K.B. and Ney, P.E. (1972) Branching Processes.{\it Springer, Berlin}.

\bibitem{CH77} Cohn, H. (1977) Almost sure convergence of branching processes. {\it Z. Wahrscheinlichkeitsth}. \textbf{38}, 73-81.

\bibitem{DD70} Darling, D.A. (1970) The Galton-Wstson process with infinite mean. {\it J. Appl. Prob}. \textbf{7}, 455-456.

\bibitem{GD77} Grey, D.R. (1977) Almost sure convergence in Markov branching processes with infinite mean. {\it J. Appl. Prob}. \textbf{14}, 702-716.

\bibitem{HC70} Heyde, C.C. (1970) Extension of a result of Seneta for the supercritical Galton-Watson process. {\it Ann. Math. Statist}. \textbf{41}, 739-742.

\bibitem{KS66} Kesten, H. and Stigum, B.P. (1966) A limit theorem for multidimensional Galton-Watson processes. {\it Ann. Math. Statist}. \textbf{37}, 1211-1223.

\bibitem{SB77} Schuh, H.-J. and Barbour, A.D. (1977) On the asymptotic behaviour of branching processes with infinite mean. {\it Adv. Appl. Prob}. \textbf{9}, 681-723.

\bibitem{SE69} Seneta, E. (1969) Functional equations and the Galton-Watson process. {\it Adv. Appl. Prob}. \textbf{1}, 1-42.

\bibitem{SE73} Seneta, E. (1973) The simple branching process with infinite mean I. {\it J. Appl. Prob}. \textbf{10}, 206-212.

\bibitem{TD77} Tanny, D. (1977) Limit theorems for branching processes in a random environment. {\it Ann. Prob}. \textbf{5}, 100-116.

\bibitem{TD78} Tanny, D. (1978) Normalizing constants for branching processes in random environments (B.P.R.E.). {\it Stoch. Proc. and Their Appl}. \textbf{6}, 201-211.

\bibitem{TD88} Tanny, D. (1988) A necessary and sufficient condition for a branching process in a random environment to grow like the product of its means. {\it Stoch. Proc. and Their Appl}. \textbf{28}, 123-139.

\bibitem{V87} Vatutin, V. A. (1987) Sufficient conditions for the regularity of bellman-harris branching processes. {\it Theory Probab. Appl}. \textbf{31}, 50-57.   

 \end{thebibliography}
\end{document}